\def\bcswitch{\left\{\renewcommand{\arraystretch}{1.2}\begin{array}{c@{,~}c}}
\def\ecswitch{\end{array}\right.}
\def\bcswitchs{\left\{\renewcommand{\arraystretch}{1.2}\begin{array}{c}}
\def\ecswitchs{\end{array}\right.}
\newtheorem{remark}{Remark}
\newcommand{\TheTitle}{Inexact cuts in Stochastic Dual Dynamic Programming}
\newcommand{\TheAuthors}{}
\title{{\TheTitle}}
\author{
  Vincent Guigues\thanks{School  of Applied Mathematics, Funda\c{c}\~ao Getulio Vargas,
190 Praia de Botafogo, Rio de Janeiro, Brazil,
    (\email{vguigues@fgv.br}).}
}
\begin{document}

\maketitle

\begin{abstract}
We introduce an extension of Stochastic Dual Dynamic
Programming (SDDP) to solve stochastic convex  dynamic programming equations.
This extension applies when some or all primal and dual subproblems to be solved along the
forward and backward passes of the method
are solved with bounded errors (inexactly). This inexact variant of SDDP is described both for linear 
problems (the corresponding variant being denoted by ISDDP-LP) and nonlinear problems (the corresponding variant
being denoted by ISDDP-NLP).
We prove convergence theorems for ISDDP-LP and ISDDP-NLP both for  bounded and asymptotically vanishing errors.
Finally, we present the results of numerical experiments comparing SDDP and ISDDP-LP on a
portfolio problem with direct transaction costs modelled as a multistage stochastic linear optimization problem. 
On these experiments, ISDDP-LP allows us to obtain a good policy faster than SDDP.
\end{abstract}

\begin{keywords}
Stochastic programming, Inexact cuts for value functions, Bounding $\varepsilon$-optimal dual solutions, SDDP, Inexact SDDP.
\end{keywords}

\begin{AMS}
 90C15, 90C90.
\end{AMS}

\section{Introduction}

Stochastic Dual Dynamic Programming (SDDP) is an
extension of
 the nested decomposition method \cite{birgemulti} to solve 
some $T$-stage stochastic programs, pioneered by \cite{pereira}.
Originally, in \cite{pereira}, it was presented to solve Multistage Stochastic
Linear Programs (MSLPs). Since many real-life applications in, e.g., finance and engineering,
can be modelled by such problems, until recently most papers on SDDP and related
decomposition methods, including theory papers, focused on enhancements of the method for MSLPs.
These enhancements include risk-averse SDDP \cite{shapsddp}, \cite{guiguesrom12} \cite{guiguesrom10}, \cite{philpmatos},
\cite{kozmikmorton}, \cite{shaptekaya} and a convergence proof of SDDP in \cite{philpot}
and of variants incorporating cut selection in \cite{guiguesbandarra19}.

However, SDDP can be applied to solve nonlinear stochastic convex dynamic programming equations. For such problems,
the convergence of the method was proved recently in \cite{lecphilgirar12} for risk-neutral problems, in \cite{guiguessiopt2016}
for risk-averse problems, and in \cite{guilejtekregsddp} for a regularized variant.

To the best of our knowledge, all studies on SDDP
rely on the assumption that all primal and
dual subproblems solved in the forward and backward passes of the method are solved exactly. However, when SDDP is applied to nonlinear problems,
only approximate solutions are available for the subproblems solved in the forward and backward passes of the algorithm.
Additionally, it is known (see for instance the numerical experiments
in \cite{guiguesejor2017, guiguesbandarra19, guilejtekregsddp}) that for both linear and nonlinear Multistage Stochastic Programs (MSPs),
for the first iterations of the method and especially for the first stages, the cuts computed can be quite
distant from the corresponding recourse function in the neighborhood of the trial point at which the cut was computed, making this cut quickly
dominated by other "more relevant" cuts in this neighborhood. Therefore, it makes sense, for both nonlinear and linear MSPs,  to try and solve more quickly and less accurately (inexactly)
all subproblems of the forward and backward passes corresponding to the first iterations, especially for the first stages, and to increase the precision of the computed solutions as the algorithm progresses. 

In this context, the objective of this paper is to design inexact variants of SDDP 
that take this fact into account. These inexact variants of SDDP are described both for linear 
problems (the corresponding variant being denoted by ISDDP-LP) and nonlinear problems (the corresponding variant
being denoted by ISDDP-NLP).

While the idea behind these inexact variants of SDDP is simple and the motivations are clear, the description and convergence analysis of ISDDP-NLP
applied to the class of nonlinear programs introduced in \cite{guiguessiopt2016}
require solving the following problems of convex analysis, interesting per se, and which, to the best of our knowledge, had not been discussed so far
in the literature:
\begin{itemize}
\item SDDP applied to the general class of nonlinear programs introduced in \cite{guiguessiopt2016} relies on a formula for the subdifferential of the value function $\mathcal{Q}(x)$ of a convex optimization problem
of form: 
\begin{equation}\label{qdefintro}
\mathcal{Q}(x)=\left\{
\begin{array}{l}
\inf_{y \in \mathbb{R}^{n}} \;f(y,x)\\
y\in Y \;:\;Ay+Bx=b,\;g(y,x)\leq 0,
\end{array}
\right.
\end{equation}
where $Y \subseteq \mathbb{R}^n$ is nonempty and convex, $f:\mathbb{R}^n \small{\times} \mathbb{R}^m \rightarrow \mathbb{R} \cup \{+\infty\}$ is
convex, lower semicontinuous, and proper, and the components of 
$g$ are convex lower semicontinuous functions.
Formulas for the subdifferential $\partial \mathcal{Q}(x)$ are given in \cite{guiguessiopt2016}.
These formulas are based on the assumption that primal and dual solutions to \eqref{qdefintro} are available.
When only approximate $\varepsilon$-optimal primal and dual solutions are available for \eqref{qdefintro}
written with $x=\bar x$, we derive in  Propositions \ref{varprop1} and \ref{varprop2} formulas for 
affine lower bounding functions $\mathcal{C}$ for $\mathcal{Q}$, that we call inexact cuts,
such that the distance $\mathcal{Q}( \bar x ) - \mathcal{C}( \bar x)$ between the 
values of $\mathcal{Q}$ and of the cut at $\bar x$ is bounded from above by a known function $\varepsilon_0$ of the problem
parameters.
Of course, we would like $\epsilon_0$ to be as small as possible
and we have $\varepsilon_0=0$ when $\varepsilon=0$.
\item We provide conditions ensuring that $\varepsilon$-optimal dual solutions to a convex nonlinear optimization problem
are bounded. Proposition \ref{propboundnormepsdualsol} gives an analytic formula for an upper bound on the norm of these $\varepsilon$-optimal dual solutions.
\item We show in Proposition \ref{propvanish1dual}
that if we compute inexact cuts for a sequence $({\underline{\mathcal{Q}}}^k)$  of value functions of  form \eqref{qdefintro}
(with objective functions $f^k$ of special structure)
at a sequence of points $(x^k)$ on the basis of $\varepsilon^k$-optimal primal and dual solutions
with $\lim_{k \rightarrow +\infty} \varepsilon^k = 0$, then
the distance between the inexact cuts and the value functions at these points $x^k$  converges to 0 too.
This result is very natural but some constraint qualifications
are needed (see Proposition \ref{propvanish1dual}).
\end{itemize}
When optimization problem \eqref{qdefintro} is linear, i.e., when $\mathcal{Q}$ is the value function 
of a linear program, inexact cuts can easily be obtained from approximate dual solutions
since the dual objective is linear in this case.
This observation allows us to build inexact cuts for ISDDP-LP and
was used in \cite{philpzakinex} where inexact cuts are combined with
Benders Decomposition \cite{benderscut} to solve two-stage stochastic linear programs.
In this sense, ISDDP-LP can be seen as an extension of \cite{philpzakinex} 
replacing 
two-stage stochastic linear problems by MSLPs.
In integer programming, inexact master solutions are also commonly used in Benders-like methods \cite{danieldevine},
including  SDDiP, a variant of SDDP to solve multistage stochastic linear programs with integer variables
introduced in \cite{shabbirzou}.

The outline of the paper is as follows. 
Section \ref{sec:computeinexactcuts} provides analytic formulas for computing inexact cuts for value function
$\mathcal{Q}$
of optimization problem \eqref{qdefintro}. In Section \ref{sec:boundingmulti}, we provide an explicit bound for the norm of $\varepsilon$-optimal dual solutions.
Section \ref{sec:insddpl} introduces and studies ISDDP-LP method. The class of problems to which this method applies and the algorithm are
described in Subsection \ref{sec:insddpl1}. 
In Section \ref{conv-analsddp}, we provide a convergence theorem (Theorem \ref{convisddplp}) for ISDDP-LP when errors are bounded
and show in Theorem \ref{convisddp1} that ISDDP-LP solves the original MSLP when error terms vanish asymptotically.
Section \ref{sec:isddpnlpger} introduces and studies ISDDP-NLP. The class of problems to which ISDDP-NLP applies is given in Subsection \ref{sec:sddp1}.
A detailed description of ISDDP-NLP is given in Subsection \ref{sec:isddpalgo} and in Subsection \ref{sec:convsddp} the  convergence 
of the method is shown when errors vanish asymptotically.
Finally, in Section \ref{numexp}, we compare the computational bulk of SDDP and  ISDDP-LP
on four instances of a portfolio optimization problem with direct transaction costs.
On these instances, ISDDP-LP allows us to obtain a good policy faster than SDDP (compared to SDDP, 
with ISDDP-LP the CPU time decreases
by a factor of 6.2\%, 6.4\%, 6.5\%, and 11.1\% for the four instances considered).
It is also interesting to notice that once SDDP is implemented on a MSLP, the implementation of the corresponding ISDDP-LP
with given error terms is straightforward.
Therefore, if for a given application, or given classes of problems, we can find suitable choices of error terms
either using the rules from Remark \ref{remchoicepssto}, other rules, or "playing" with these parameters running ISDDP-LP
on instances, ISDDP-LP could allow us to solve similar new instances quicker than SDDP.

\if{
We use the following notation and terminology:
\par - The usual scalar product in $\mathbb{R}^n$ is denoted by $\langle x, y\rangle = x^T y$ for $x, y \in \mathbb{R}^n$.
The corresponding norm is $\|x\|=\|x\|_2=\sqrt{\langle x, x \rangle}$.
\par - $\mbox{ri}(A)$ is the relative interior of set $A$.
\par - $\mathbb{B}_n(x_0, r)=\{x \in \mathbb{R}^n : \|x - x_0\| \leq r\}$ for $x_0 \in \mathbb{R}^n, r \geq 0$.
\par - dom($f$) is the domain of function $f$.
\par - $\mbox{Diam}(X)=\max_{x, y \in X} \|x-y\|$ is the diameter of $X$.
\par - $\mathcal{N}_A(x)$ is the normal cone to $A$ at $x$.
\par - $X^\varepsilon := X + \varepsilon  \mathbb{B}_n(0, 1)$ is  the $\varepsilon$-fattening of the set $X \subset \mathbb{R}^n$.
\par - $\mathcal{C}(\mathcal{X})$ is the set of continuous real-valued functions on $\mathcal{X}$, equipped with the
norm $\|f\|_{\mathcal{X}} = \sup_{x \in \mathcal{X}}|f(x)|$. 
\par - $\mathcal{C}^1(\mathcal{X})$ is the set of real-valued continuously differentiable functions on $\mathcal{X}$. 
\par - span($X$) is the linear span of set of vectors $X$ and Aff($X$) is the affine span of $X$.
}\fi

\section{Computing inexact cuts for the value function of a convex optimization problem}\label{sec:computeinexactcuts}

\subsection{Inexact cuts for the value function of a linear program}\label{sec:computeinexactcutslp}

Let $X \subset \mathbb{R}^m$ and let $\mathcal{Q}:X\rightarrow {\overline{\mathbb{R}}}$ be the value function given by
\begin{equation} \label{vfunctionlp}
\mathcal{Q}(x)=\left\{
\begin{array}{l}
\inf_{y \in \mathbb{R}^n} \;c^T y\\
y \in Y(x):=\{y \in \mathbb{R}^n : Ay + B x = b, C y \leq f\},
\end{array}
\right.
\end{equation}
for matrices and vectors of appropriate sizes. We assume:\\
\par (H) for every $x \in X$, the set $Y(x)$ is nonempty and 
$y \rightarrow c^T y$ is bounded from below on $Y(x)$.\\
\par If Assumption (H) holds then $\mathcal{Q}$ is convex and finite on $X$ and by duality we can write
\begin{equation} \label{dualvfunctionlp}
\mathcal{Q}(x)=\left\{
\begin{array}{l}
\sup_{\lambda, \mu} \;\lambda^T (b-Bx) + \mu^Tf\\
A^T \lambda + C^T  \mu = c, \mu \leq 0,
\end{array}
\right.
\end{equation}
for $x \in X$. We will call an affine lower bounding function for $\mathcal{Q}$ on $X$ a cut for $\mathcal{Q}$ on $X$.
We say that cut $\mathcal{C}$ is inexact at $\bar x$ for convex function $\mathcal{Q}$
if the distance
$\mathcal{Q}( \bar x ) - \mathcal{C}( \bar x)$ between the 
values of $\mathcal{Q}$ and of the cut at $\bar x$ is strictly positive. When $\mathcal{Q}( \bar x )= \mathcal{C}( \bar x)$ we will say that cut 
$\mathcal{C}$ is exact at $\bar x$.

The following simple proposition will be used to derive ISDDP-LP: it provides an inexact cut for $\mathcal{Q}$ at $\bar x \in X$
on the basis of an approximate solution of \eqref{dualvfunctionlp}: 
\begin{proposition} \label{inexactlp}
Let Assumption (H) hold and let $\bar x \in X$.
Let $(\hat \lambda(\varepsilon), \hat \mu(\varepsilon))$ be an $\epsilon$-optimal feasible solution for dual problem \eqref{dualvfunctionlp}
written for $x= \bar x$, i.e., $A^T {\hat \lambda}(\varepsilon) + C^T  {\hat \mu}(\varepsilon) = c$, $\hat \mu(\varepsilon) \leq 0$, and
\begin{equation}\label{dualsoleps}
 {\hat \lambda}(\varepsilon)^T (b-B {\bar x}) + {\hat \mu}(\varepsilon)^T f \geq \mathcal{Q}( \bar x ) - \varepsilon,
\end{equation}
for some $\varepsilon \geq 0$. Then 
the affine function
$$
\mathcal{C}(x):= {\hat \lambda}(\varepsilon)^T (b-Bx) + {\hat \mu}(\varepsilon)^T f
$$
is a cut for $\mathcal{Q}$ at $\bar x$, i.e., for every $x \in X$ we have
$\mathcal{Q}(x) \geq \mathcal{C}(x)$ and the distance 
$\mathcal{Q}( \bar x ) - \mathcal{C}( \bar x)$ between the 
values of $\mathcal{Q}$ and of the cut at $\bar x$ is at most
$\varepsilon$. 
\end{proposition}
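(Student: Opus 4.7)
The plan is to exploit the fact that in the dual program \eqref{dualvfunctionlp} the parameter $x$ appears only in the objective, never in the feasible set. Consequently, a dual-feasible pair at $\bar x$ is dual-feasible at every $x \in X$, and weak duality then delivers the lower-bound property uniformly in $x$.

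First I would observe that the constraints defining the dual feasible set, namely $A^T \lambda + C^T \mu = c$ and $\mu \leq 0$, are independent of $x$. Thus, the given $\varepsilon$-optimal pair $(\hat\lambda(\varepsilon),\hat\mu(\varepsilon))$ is, in particular, dual-feasible for problem \eqref{dualvfunctionlp} written at any $x \in X$. Under Assumption (H), for each such $x$ the primal problem is feasible and bounded below, so strong duality holds and $\mathcal{Q}(x)$ equals the dual optimum in \eqref{dualvfunctionlp}. Applying weak duality at the arbitrary point $x$ to the feasible pair $(\hat\lambda(\varepsilon),\hat\mu(\varepsilon))$ yields
\begin{equation*}
\mathcal{C}(x) \;=\; \hat\lambda(\varepsilon)^T(b - Bx) + \hat\mu(\varepsilon)^T f \;\leq\; \mathcal{Q}(x),
\end{equation*}
which is precisely the cut inequality. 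Since $\mathcal{C}$ is affine in $x$, this establishes the first claim.

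For the second claim I would simply evaluate $\mathcal{C}$ at $\bar x$ and invoke the hypothesis \eqref{dualsoleps}: by definition
\begin{equation*}
\mathcal{C}(\bar x) \;=\; \hat\lambda(\varepsilon)^T(b - B\bar x) + \hat\mu(\varepsilon)^T f \;\geq\; \mathcal{Q}(\bar x) - \varepsilon,
\end{equation*}
hence $\mathcal{Q}(\bar x) - \mathcal{C}(\bar x) \leq \varepsilon$, as announced. Combined with the first part one also has $\mathcal{Q}(\bar x) - \mathcal{C}(\bar x) \geq 0$, so the gap lies in $[0,\varepsilon]$.

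There is essentially no obstacle here: the proof is a one-line consequence of LP duality together with the $x$-independence of the dual feasible region. The only hypothesis being genuinely used beyond $x$-feasibility of $(\hat\lambda(\varepsilon),\hat\mu(\varepsilon))$ is that strong duality, or at least weak duality with finite primal value, holds at every $x \in X$, which is guaranteed by Assumption (H). I would keep the write-up to these two short paragraphs, emphasizing that the construction is fully analytic and does not require $\varepsilon$ to be small.
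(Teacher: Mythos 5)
Your proof is correct and follows the same route as the paper's: dual feasibility of $(\hat\lambda(\varepsilon),\hat\mu(\varepsilon))$ is independent of $x$, so weak duality gives $\mathcal{C}(x)\leq\mathcal{Q}(x)$ for all $x\in X$, and \eqref{dualsoleps} gives the bound $\varepsilon$ on the gap at $\bar x$. You merely spell out the $x$-independence of the dual feasible region more explicitly than the paper does.
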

\begin{proof} $\mathcal{C}$ is indeed a cut for $\mathcal{Q}$ (an affine lower bounding function for $\mathcal{Q}$) because $(\hat \lambda(\varepsilon), \hat \mu(\varepsilon))$ is feasible for optimization problem \eqref{dualvfunctionlp}.
Relation \eqref{dualsoleps} gives the upper bound $\varepsilon$ for $\mathcal{Q}( \bar x ) - \mathcal{C}( \bar x)$.
\end{proof}

\subsection{Inexact cuts for the value function of a convex nonlinear program}\label{sec:computeinexactcutsnlp}

Let $\mathcal{Q}: X\rightarrow {\overline{\mathbb{R}}}$ be the value function given by
\begin{equation} \label{vfunctionq}
\mathcal{Q}(x)=\left\{
\begin{array}{l}
\inf_{y \in \mathbb{R}^{n}} \;f(y,x)\\
y \in S(x):=\{y\in Y \;:\;Ay+Bx=b,\;g(y,x)\leq 0\}.
\end{array}
\right.
\end{equation}
Here, $X \subseteq \mathbb{R}^m$ is nonempty, compact, and convex; $Y \subseteq \mathbb{R}^n$ is nonempty, closed, and convex;
and $A$ and $B$ are respectively $q \small{\times} n$ and $q \small{\times} m$ real matrices.
We will make the following assumptions which imply, in particular, the convexity of $\mathcal{Q}$ given
by \eqref{vfunctionq}:\\

\par (H1) $f:\mathbb{R}^n \small{\times} \mathbb{R}^m \rightarrow \mathbb{R} \cup \{+\infty\}$ is 
lower semicontinuous, proper, and convex.
\par (H2) For $i=1,\ldots,p$, the $i$-th component of function
$g(y, x)$ is a convex lower semicontinuous function
$g_i:\mathbb{R}^n \small{\times} \mathbb{R}^m \rightarrow \mathbb{R} \cup \{+\infty\}$.\\

As before, we say that $\mathcal{C}$ is a cut for $\mathcal{Q}$ on $X$ if $\mathcal{C}$ is an affine
function of $x$ such that $\mathcal{Q}(x) \geq \mathcal{C}(x)$ for all $x \in X$. We say that the cut
is exact at $\bar x \in X$ if $\mathcal{Q}(\bar x) = \mathcal{C}(\bar x)$.
Otherwise, the cut is said to be inexact at $\bar x$.

In this section,  our basic goal is, given ${\bar x} \in X$ and
$\varepsilon$-optimal primal and dual solutions of \eqref{vfunctionq} written for $x=\bar x$, to derive
an inexact cut $\mathcal{C}(x)$ for $\mathcal{Q}$ at $\bar x$, i.e., an affine lower bounding function
for $\mathcal{Q}$ such that the distance
$\mathcal{Q}( \bar x ) - \mathcal{C}( \bar x)$ between the 
values of $\mathcal{Q}$ and of the cut at $\bar x$ is bounded from above by a known function of the problem
parameters. Of course, when $\varepsilon=0$, we will check that 
$\mathcal{Q}( \bar x )= \mathcal{C}( \bar x)$.

For $x \in X$, let us introduce for problem \eqref{vfunctionq} the Lagrangian
function 
$$
L_{x}(y, \lambda, \mu)=f(y,x) + \lambda^T (Bx+Ay-b) + \mu^T g(y,x)
$$ and the function
$\ell :Y \small{\times} X  \small{\times}  \mathbb{R}^q      \small{\times} \mathbb{R}_{+}^p \rightarrow \mathbb{R}_{+}$ given by
\begin{equation}\label{defrxy}
\ell (\hat y , \bar x , \hat \lambda , \hat \mu ) = - \min_{y \in Y} \langle \nabla_y L_{\bar x} ( \hat y , \hat \lambda , \hat \mu ) , y - \hat y  \rangle = \max_{y \in Y} 
\langle  \nabla_y L_{\bar x} ( \hat y , \hat \lambda , \hat \mu ) , \hat y - y  \rangle,
\end{equation}
where, here and in what follows, scalar product $\langle \cdot , \cdot \rangle $ is given by $\langle x , y \rangle = x^T y$
and induces norm $\|\cdot\|:=\|\cdot\|_2$.
Next, dual function $\theta_x$ for problem \eqref{vfunctionq} can be written
$
\theta_{x}(\lambda, \mu)=\displaystyle \inf_{y \in Y} \;L_{x}(y, \lambda, \mu)
$
while the dual problem is 
\begin{equation}\label{dualpb}
\displaystyle \sup_{(\lambda, \mu) \in \mathbb{R}^q \small{\times} \mathbb{R}_{+}^{p} }\; \theta_{x}(\lambda, \mu).
\end{equation}
We make the following assumption which ensures no duality gap for \eqref{vfunctionq} for any $x \in X$:\\
\begin{itemize}
\item[(H3)] $\forall x \in X$ $\exists y_x \in \mbox{ri}(Y):\;$ $Bx+Ay_x=b$ and $g(y_x , x)< 0$.\\
\end{itemize}
The following proposition provides an inexact cut for $\mathcal{Q}$ given by \eqref{vfunctionq}:
\begin{proposition} \label{varprop1} Let $\bar x \in X$, let $\varepsilon \geq 0$,
let $\hat y(\epsilon)$ be an $\epsilon$-optimal feasible primal solution for problem \eqref{vfunctionq}
written for $x= \bar x$ 
and let $(\hat \lambda(\epsilon), \hat \mu(\epsilon))$ be an $\epsilon$-optimal feasible solution of the
corresponding dual problem, i.e., of problem
\eqref{dualpb} written for $x=\bar x$.
Let Assumptions (H1), (H2), and (H3) hold. 
Assume that $Y$ is nonempty, closed, and convex, that $f(\cdot,x)$ is finite on $S(x)$ for all $x \in X$, and that 
$\eta(\varepsilon)=\ell (\hat y(\epsilon) , \bar x  , \hat \lambda(\epsilon) , \hat \mu(\epsilon) )$
is finite.
If additionally $f$ and $g$ are differentiable on $Y \small{\times} X$ then the affine function
\begin{equation}\label{cutvarprop1}
\mathcal{C}(x):= L_{\bar x} ( \hat y(\epsilon), {\hat \lambda}(\epsilon), \hat \mu(\epsilon) )- \eta(\varepsilon)  + 
\langle \nabla_x L_{\bar x} ( \hat y(\epsilon), {\hat \lambda}(\epsilon), \hat \mu(\epsilon) ) , x - \bar x \rangle
\end{equation}
is a cut for $\mathcal{Q}$ at $\bar x$ and the distance 
$\mathcal{Q}( \bar x ) - \mathcal{C}( \bar x)$ between the 
values of $\mathcal{Q}$ and of the cut at $\bar x$ is at most
$\varepsilon + \ell (\hat y(\epsilon),  \bar x  , \hat \lambda(\epsilon) , \hat \mu(\epsilon) )$.
\end{proposition}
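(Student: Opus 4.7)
The plan is to combine weak duality with the joint convexity of the Lagrangian $L_{x}(y,\hat\lambda(\varepsilon),\hat\mu(\varepsilon))$ in $(y,x)$, which holds because $\hat\mu(\varepsilon)\geq 0$ and the functions $f$ and each component of $g$ are jointly convex and differentiable. A first-order expansion of $L$ at $(\hat y(\varepsilon),\bar x)$ delivers the affine cut, while taking the infimum over $Y$ in the $y$-direction contributes exactly the correction $-\eta(\varepsilon)$; strong duality together with the $\varepsilon$-optimality of $(\hat\lambda(\varepsilon),\hat\mu(\varepsilon))$ will then control the gap at $\bar x$.

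First I would verify that $\mathcal{C}$ is a valid cut. For every $x \in X$, weak duality and $\hat\mu(\varepsilon)\geq 0$ give
$$\mathcal{Q}(x)\ \geq\ \theta_{x}(\hat\lambda(\varepsilon),\hat\mu(\varepsilon))\ =\ \inf_{y\in Y}L_{x}(y,\hat\lambda(\varepsilon),\hat\mu(\varepsilon)).$$
By the joint convexity and differentiability of $L$, for every $y \in Y$,
$$L_{x}(y,\hat\lambda,\hat\mu)\ \geq\ L_{\bar x}(\hat y,\hat\lambda,\hat\mu)+\langle\nabla_{y}L_{\bar x}(\hat y,\hat\lambda,\hat\mu),y-\hat y\rangle+\langle\nabla_{x}L_{\bar x}(\hat y,\hat\lambda,\hat\mu),x-\bar x\rangle.$$
Taking the infimum over $y \in Y$ on both sides and recognizing, from \eqref{defrxy}, that $\inf_{y\in Y}\langle\nabla_{y}L_{\bar x}(\hat y,\hat\lambda,\hat\mu),y-\hat y\rangle=-\eta(\varepsilon)$, I obtain $\mathcal{Q}(x)\geq\mathcal{C}(x)$ for every $x\in X$.

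Next I would bound the gap at $\bar x$. Evaluating the cut gives $\mathcal{C}(\bar x)=L_{\bar x}(\hat y(\varepsilon),\hat\lambda(\varepsilon),\hat\mu(\varepsilon))-\eta(\varepsilon)$. Assumption (H3) is a Slater-type qualification ensuring that \eqref{vfunctionq} and its dual \eqref{dualpb} have no duality gap at $\bar x$, so the dual optimal value equals $\mathcal{Q}(\bar x)$, and $\varepsilon$-optimality of $(\hat\lambda(\varepsilon),\hat\mu(\varepsilon))$ yields $\theta_{\bar x}(\hat\lambda(\varepsilon),\hat\mu(\varepsilon))\geq\mathcal{Q}(\bar x)-\varepsilon$. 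Since $L_{\bar x}(\hat y,\hat\lambda,\hat\mu)\geq\inf_{y\in Y}L_{\bar x}(y,\hat\lambda,\hat\mu)=\theta_{\bar x}(\hat\lambda,\hat\mu)$, I get $L_{\bar x}(\hat y,\hat\lambda,\hat\mu)\geq\mathcal{Q}(\bar x)-\varepsilon$, whence $\mathcal{C}(\bar x)\geq\mathcal{Q}(\bar x)-\varepsilon-\eta(\varepsilon)$, which is the desired estimate.

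The main subtlety is invoking strong duality for \eqref{vfunctionq} at $\bar x$ from (H3); this is a standard Rockafellar-type theorem but must be cited carefully given the mixture of equality constraints $Ay+Bx=b$, convex inequality constraints $g(y,x)\leq 0$, and the abstract set $Y$, using the relative-interior point $y_x$. Finiteness of $\eta(\varepsilon)$ is assumed, so $\mathcal{C}$ is well-defined, and when $\varepsilon=0$ one has $\hat y(0)\in\arg\min L_{\bar x}(\cdot,\hat\lambda(0),\hat\mu(0))$ over $Y$ so that $\eta(0)=0$, recovering the exact-cut case of \cite{guiguessiopt2016}. The rest is routine manipulation of the two first-order inequalities above.
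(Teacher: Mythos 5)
Your proof is correct and follows essentially the same route as the paper's: joint convexity of $L_{\cdot}(\cdot,\hat\lambda,\hat\mu)$ for $\hat\mu\geq 0$ gives the gradient inequality, minimizing over $y\in Y$ produces the $-\eta(\varepsilon)$ term and combines with weak duality to yield the cut, and the gap bound follows from $L_{\bar x}(\hat y,\hat\lambda,\hat\mu)\geq\theta_{\bar x}(\hat\lambda,\hat\mu)\geq\mathcal{Q}(\bar x)-\varepsilon$ via (H3) and dual $\varepsilon$-optimality. Like the paper, you only actually need feasibility (not $\varepsilon$-optimality) of the primal point $\hat y(\varepsilon)$, so nothing is missing.
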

\begin{proof}
To simplify notation, we use 
$\hat y, \hat \lambda, \hat \mu$, for respectively $\hat y(\epsilon), \hat \lambda(\epsilon), \hat \mu(\epsilon)$.
Consider primal problem \eqref{vfunctionq} written for $x=\bar x$.
Due to Assumption (H3) and the fact that $f(\cdot,\bar x)$ is bounded from below on $S(\bar x)$,
the optimal value $\mathcal{Q}(\bar x)$ of this problem
is the optimal value of the corresponding dual problem, i.e., of problem
\eqref{dualpb} written for $x=\bar x$. 
Using the fact that $\hat y$ and $(\hat \lambda, \hat \mu)$ are respectively $\varepsilon$-optimal primal
and dual solutions it follows that
\begin{equation}\label{optprimaldual}
f( \hat y , \bar x) \leq \mathcal{Q}(  \bar x  ) + \varepsilon \mbox{ and }\theta_{\bar x}( \hat \lambda , \hat \mu  ) \geq \mathcal{Q}(  \bar x  ) - \varepsilon. 
\end{equation}
Moreover, since the approximate primal and dual solutions are feasible, we have that
\begin{equation}\label{feasibility}
 \hat y \in Y,\, B {\bar x} + A {\hat y} = b,\,g(\hat y, \bar x) \leq 0, \,\hat \mu \geq 0.
\end{equation}
Using Relation \eqref{optprimaldual}, the definition of dual function $\theta_{{\bar x}}$, and the fact that $\hat y \in Y$, we get
\begin{equation}\label{optdual2}
L_{\bar x} (\hat y, {\hat \lambda}, \hat \mu ) \geq  \theta_{\bar x}( \hat \lambda , \hat \mu  ) \geq \mathcal{Q}( \bar x ) - \varepsilon.  
\end{equation}
Due to Assumptions (H1) and (H2), for any $\lambda$ and $\mu \geq 0$ the function $L_{\cdot}(\cdot,\lambda,\mu)$ which associates 
the value $L_{x}(y, \lambda, \mu)$ to $(x,y)$ is convex. 
Since $\hat \mu \geq 0$, it follows that for every $x \in X, y \in Y$, we have that 
\begin{align*}
L_x (y, {\hat \lambda}, \hat \mu )   \geq  
L_{\bar x} ( \hat y, {\hat \lambda}, \hat \mu ) +\langle \nabla_x L_{\bar x} ( \hat y, {\hat \lambda}, \hat \mu ) , x - \bar x \rangle + \langle \nabla_y L_{\bar x} ( \hat y, {\hat \lambda}, \hat \mu ) , y - \hat y \rangle.
\end{align*}
Since $(\hat \lambda, \hat \mu)$ is feasible for dual problem \eqref{dualpb}, the Weak Duality Theorem gives
$\mathcal{Q}( x  ) \geq  \theta_{x}( \hat \lambda , \hat \mu  )  = \inf_{y \in Y} L_{x}(y, \hat \lambda , \hat \mu )$ 
for every $x \in X$
and minimizing over $y \in Y$ on each side of the above inequality  we obtain
\begin{align*}
\mathcal{Q}( x  )   \geq  
L_{\bar x} ( \hat y, {\hat \lambda}, \hat \mu )-  \ell (\hat y,  \bar x  , \hat \lambda , \hat \mu ) + 
\langle \nabla_x L_{\bar x} ( \hat y, {\hat \lambda}, \hat \mu ) , x - \bar x \rangle.
\end{align*}
Finally, using relation \eqref{optdual2}, we get
\begin{align*}
\mathcal{Q}( \bar x ) - \mathcal{C}( \bar x ) =
\mathcal{Q}( \bar x ) - L_{\bar x} ( \hat y, {\hat \lambda}, \hat \mu )+  \ell (\hat y,  \bar x  , \hat \lambda , \hat \mu ) \leq \varepsilon + \ell (\hat y , \bar x  , \hat \lambda , \hat \mu ).
\end{align*}
\end{proof}
We now refine the bound $\varepsilon + \ell (\hat y(\epsilon),  \bar x  , \hat \lambda(\epsilon) , \hat \mu(\epsilon) )$ on $\mathcal{Q}( \bar x ) - \mathcal{C}( \bar x)$ given by Proposition \ref{varprop1}
making the following assumptions:  
\begin{itemize}
\item[(H4)] $f$ is differentiable on $Y \small{\times} X$ and there exists $M_1>0$ such that for every $x \in X, y_1, y_2 \in Y$, we have
$$
\|\nabla_y f(y_2,x) -  \nabla_y f(y_1, x)  \|  \leq  M_1 \|y_2   - y_1\|. 
$$
\item[(H5)] $g$ is differentiable on $Y \small{\times} X$ and
there exists $M_2>0$ such that for every $i=1,\ldots,p, x \in X, y_1, y_2 \in Y$, we have
$$
\|\nabla_y g_i(y_2,x) -  \nabla_y g_i(y_1, x)  \|  \leq  M_2 \|y_2   - y_1\|. 
$$
\end{itemize}
In what follows we denote the diameter of set $Y$ by $D(Y)$.
\begin{proposition} \label{varprop2}
Assume that $Y$ is nonempty, convex, and compact.
Let $\bar x \in X$, let $\varepsilon \geq 0$,
let $\hat y( \epsilon)$ be an $\epsilon$-optimal feasible primal solution for problem \eqref{vfunctionq}
written for $x= \bar x$ 
and let $(\hat \lambda( \epsilon), \hat \mu( \epsilon))$ be an $\epsilon$-optimal feasible solution of the
corresponding dual problem, i.e., of problem
\eqref{dualpb} written for $x=\bar x$.
Also let $\mathcal{L}_{\bar x}$ be any lower bound on $\mathcal{Q}( \bar x )$.
Let Assumptions (H1), (H2), (H3), (H4), and (H5) hold. 
Then $\mathcal{C}(x)$ given by \eqref{cutvarprop1} is a cut for $\mathcal{Q}$
at $\bar x$  and setting
$
M_3=M_1 + \mathcal{U}_{\bar x} M_2
$
with
$$
\mathcal{U}_{\bar x}= 
\frac{f(y_{\bar x} , \bar x  )  - \mathcal{L}_{\bar x} + \varepsilon }{\min (-g_{i}(y_{\bar x} ,  \bar x), i=1,\ldots,p )},
$$
the distance 
$\mathcal{Q}( \bar x ) - \mathcal{C}( \bar x)$ between the 
values of $\mathcal{Q}$ and of the cut at $\bar x$ is at most
$$
\begin{array}{ll}
\varepsilon + \ell (\hat y( \epsilon),  \bar x  , \hat \lambda( \epsilon) , \hat \mu( \epsilon) ) - \frac{\ell (\hat y( \epsilon),  \bar x  , \hat \lambda( \epsilon) , \hat \mu( \epsilon) )^2}{2 M_3 \emph{D}(Y)^2}&\mbox{if }\ell (\hat y( \epsilon),  \bar x  , \hat \lambda( \epsilon) , \hat \mu( \epsilon) ) \leq M_3 \emph{D}(Y)^2,\\
\varepsilon + \frac{1}{2} \ell (\hat y( \epsilon),  \bar x  , \hat \lambda( \epsilon) , \hat \mu( \epsilon) )& \mbox{otherwise.}
\end{array}
$$
\end{proposition}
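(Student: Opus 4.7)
The cut property $\mathcal{Q}(x) \geq \mathcal{C}(x)$ on $X$ is already covered by Proposition \ref{varprop1}, so the task reduces to sharpening the distance bound at $\bar x$ under the additional smoothness Assumptions (H4)--(H5). The plan has two independent ingredients: first, an explicit bound $\sum_{i=1}^{p} \hat \mu_i(\varepsilon) \leq \mathcal{U}_{\bar x}$ on the dual multipliers, extracted from the Slater point supplied by (H3); second, a quadratic improvement of the $\varepsilon + \ell$ estimate obtained from the Lipschitz continuity of $\nabla_y L_{\bar x}(\cdot, \hat \lambda(\varepsilon), \hat \mu(\varepsilon))$.

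For the multiplier bound I would combine $\theta_{\bar x}(\hat \lambda(\varepsilon), \hat \mu(\varepsilon)) \geq \mathcal{Q}(\bar x) - \varepsilon \geq \mathcal{L}_{\bar x} - \varepsilon$ (from $\varepsilon$-dual optimality and the lower-bound hypothesis) with $\theta_{\bar x}(\hat \lambda(\varepsilon), \hat \mu(\varepsilon)) \leq L_{\bar x}(y_{\bar x}, \hat \lambda(\varepsilon), \hat \mu(\varepsilon)) = f(y_{\bar x}, \bar x) + \hat \mu(\varepsilon)^T g(y_{\bar x}, \bar x)$, the equality using $A y_{\bar x} + B \bar x = b$ from (H3). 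Rearranging yields $-\hat \mu(\varepsilon)^T g(y_{\bar x}, \bar x) \leq f(y_{\bar x}, \bar x) - \mathcal{L}_{\bar x} + \varepsilon$, and since $\hat \mu(\varepsilon) \geq 0$ and each $-g_i(y_{\bar x}, \bar x) \geq \min_{j}(-g_j(y_{\bar x}, \bar x)) > 0$ by the strict Slater inequality, this yields $\sum_{i=1}^{p} \hat \mu_i(\varepsilon) \leq \mathcal{U}_{\bar x}$. Feeding this into the identity $\nabla_y L_{\bar x}(y, \hat \lambda(\varepsilon), \hat \mu(\varepsilon)) = \nabla_y f(y, \bar x) + A^T \hat \lambda(\varepsilon) + \sum_{i=1}^{p} \hat \mu_i(\varepsilon) \nabla_y g_i(y, \bar x)$ and invoking (H4)--(H5) then shows that $y \mapsto L_{\bar x}(y, \hat \lambda(\varepsilon), \hat \mu(\varepsilon))$ is convex with $M_3$-Lipschitz gradient on $Y$.

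The descent inequality then gives, for every $y' \in Y$, $L_{\bar x}(y', \hat \lambda(\varepsilon), \hat \mu(\varepsilon)) \leq L_{\bar x}(\hat y(\varepsilon), \hat \lambda(\varepsilon), \hat \mu(\varepsilon)) + \langle \nabla_y L_{\bar x}(\hat y(\varepsilon), \hat \lambda(\varepsilon), \hat \mu(\varepsilon)), y' - \hat y(\varepsilon) \rangle + \tfrac{M_3}{2}\|y' - \hat y(\varepsilon)\|^2$. Picking $y' = \hat y(\varepsilon) + \alpha(y^\star - \hat y(\varepsilon))$ for $\alpha \in [0,1]$ and $y^\star \in Y$ attaining the maximum in the definition of $\ell(\hat y(\varepsilon), \bar x, \hat \lambda(\varepsilon), \hat \mu(\varepsilon))$, using convexity of $Y$ to ensure $y' \in Y$ and bounding $\|y^\star - \hat y(\varepsilon)\| \leq D(Y)$, I would obtain $\theta_{\bar x}(\hat \lambda(\varepsilon), \hat \mu(\varepsilon)) \leq L_{\bar x}(\hat y(\varepsilon), \hat \lambda(\varepsilon), \hat \mu(\varepsilon)) - \max_{\alpha \in [0,1]} \bigl(\alpha \ell - \tfrac{M_3 \alpha^2 D(Y)^2}{2}\bigr)$. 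This univariate maximum is attained at $\alpha^\star = \min(1, \ell/(M_3 D(Y)^2))$: if $\ell \leq M_3 D(Y)^2$ it equals $\ell^2/(2 M_3 D(Y)^2)$; otherwise it equals $\ell - M_3 D(Y)^2/2$, which in this regime is strictly larger than $\ell/2$.

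To finish, I would inject these bounds into the identity produced in the proof of Proposition \ref{varprop1}, namely $\mathcal{Q}(\bar x) - \mathcal{C}(\bar x) = \mathcal{Q}(\bar x) - L_{\bar x}(\hat y(\varepsilon), \hat \lambda(\varepsilon), \hat \mu(\varepsilon)) + \ell$, combined with $\mathcal{Q}(\bar x) \leq \theta_{\bar x}(\hat \lambda(\varepsilon), \hat \mu(\varepsilon)) + \varepsilon$, to read off the two announced case-dependent bounds. The delicate step is the multiplier bound, which genuinely relies on the strict form of Slater's condition in (H3) (otherwise the division by $\min_{i}(-g_i(y_{\bar x}, \bar x))$ defining $\mathcal{U}_{\bar x}$ is not available); once $M_3$ is under control, the descent-based refinement is a routine one-dimensional calibration.
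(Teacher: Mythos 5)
Your proposal is correct and follows essentially the same route as the paper's proof: the same Slater-point argument bounding $\|\hat\mu(\varepsilon)\|_1$ by $\mathcal{U}_{\bar x}$, the same $M_3$-Lipschitz gradient bound for $L_{\bar x}(\cdot,\hat\lambda(\varepsilon),\hat\mu(\varepsilon))$, and the same one-dimensional descent-lemma optimization along the segment toward the maximizer defining $\ell$. The only cosmetic difference is that the paper phrases the final step as minimizing $-t\ell + \tfrac{1}{2}M_3 t^2 D(Y)^2$ over $t\in[0,1]$ rather than maximizing its negative, which is the same computation.
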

\begin{proof}
As before we use the short notation
$\hat y, \hat \lambda, \hat \mu$, for respectively $\hat y(\epsilon), \hat \lambda(\epsilon), \hat \mu(\epsilon)$.
We already know from Proposition \ref{varprop1} that $\mathcal{C}$ is a cut for $\mathcal{Q}$. Let us now prove
the upper bound for $\mathcal{Q}( \bar x ) - \mathcal{C}( \bar x)$ given in the proposition.
We compute 
$$
\nabla_y L_{\bar x}(y, \lambda, \mu) =\nabla_y f(y, \bar x) + A^T \lambda + \sum_{i=1}^p \mu_i \nabla_y g_i(y, \bar x).
$$
Therefore for every $y_1, y_2 \in Y$, using Assumptions (H4) and (H5), we have
\begin{equation} \label{boundgradLag}
\|\nabla_y L_{\bar x}(y_2, {\hat \lambda}, {\hat \mu})  -   \nabla_y L_{\bar x}(y_1, {\hat \lambda}, {\hat \mu}) \| \leq (M_1 +  \|{\hat \mu}\|_1 M_2) \|y_2 - y_1\|.  
\end{equation}
Next observe that 
$$
\begin{array}{lll}
\mathcal{L}_{\bar x} - \varepsilon \leq 
\mathcal{Q}( \bar x )-\varepsilon  \leq \theta_{\bar x}(\hat \lambda, \hat \mu)  & \leq & f(y_{\bar x}, {\bar x}) + {\hat \lambda}^T ( A y_{\bar x} + B {\bar x} - b ) +  {\hat \mu}^T g( y_{\bar x}, \bar x ) \\
& \leq & f(y_{\bar x}, {\bar x}) + \|{\hat \mu}\|_1  \max_{i=1,\ldots,p} g_{i}(y_{\bar x} ,  \bar x).
\end{array}
$$
From the above relation, we get $\| {\hat \mu}\|_1 \leq \mathcal{U}_{\bar x}$, which, plugged into \eqref{boundgradLag}, gives
\begin{equation}\label{boundlag}
\|\nabla_y L_{\bar x}(y_2, {\hat \lambda}, {\hat \mu})  -   \nabla_y L_{\bar x}(y_1, {\hat \lambda}, {\hat \mu}) \| \leq M_3 \|y_2 - y_1\|.
\end{equation}
Now let $y_* \in Y$ such that 
$
\ell (\hat y , \bar x , \hat \lambda , \hat \mu ) = 
\langle  \nabla_y L_{\bar x} ( \hat y , \hat \lambda , \hat \mu ) , \hat y - y_*  \rangle.
$
Using relation \eqref{boundlag}, for every $0 \leq t \leq 1$, we get
$$
L_{\bar x}({\hat y} + t (y_* - {\hat y}), {\hat \lambda}, {\hat \mu}) \leq L_{\bar x}({\hat y}, {\hat \lambda}, {\hat \mu})
+ t \langle \nabla_y L_{\bar x}({\hat y}, {\hat \lambda}, {\hat \mu}), y_* - {\hat y} \rangle + \frac{1}{2} M_3 t^2 \|y_* - {\hat y}\|^2.
$$
Since ${\hat y} + t (y_* - {\hat y}) \in Y$, using the above relation and the definition of $\theta_{\bar x}$, we obtain
$$
\mathcal{Q}( \bar x ) - \varepsilon \leq \theta_{\bar x}(\hat \lambda , \hat \mu ) \leq L_{\bar x}({\hat y}, {\hat \lambda}, {\hat \mu}) -t \ell (\hat y,  \bar x  , \hat \lambda , \hat \mu ) + \frac{1}{2} M_3 t^2 \|y_* - {\hat y}\|^2.
$$
Therefore $\mathcal{Q}( \bar x ) - \mathcal{C}( \bar x ) = \mathcal{Q}( \bar x ) - L_{\bar x}({\hat y}, {\hat \lambda}, {\hat \mu}) + \ell (\hat y,  \bar x  , \hat \lambda , \hat \mu )$
is bounded from above by
$$
\varepsilon +  \ell (\hat y,  \bar x  , \hat \lambda , \hat \mu ) + \min_{0 \leq t \leq 1} \Big(-t \ell (\hat y,  \bar x  , \hat \lambda , \hat \mu ) + \frac{1}{2} M_3 t^2 \mbox{D}(Y)^2 \Big)
$$
and we easily conclude computing $\min_{0 \leq t \leq 1} \Big( -t \ell (\hat y,  \bar x  , \hat \lambda , \hat \mu ) + \frac{1}{2} M_3 t^2 \mbox{D}(Y)^2 \Big)$.
\end{proof}
\begin{remark}\label{remextincutvar} It is possible to extend Proposition \ref{varprop2} when optimization problem 
$\max_{y \in Y}  \langle  \nabla_y L_{\bar x} ( \hat y , \hat \lambda , \hat \mu ) , \hat y - y  \rangle$
with optimal value
$\ell (\hat y , \bar x , \hat \lambda , \hat \mu )$
is  solved approximately.
\end{remark}

\section{Bounding the norm of $\varepsilon$-optimal solutions to the dual of a convex optimization problem} \label{sec:boundingmulti} 

Consider the following convex optimization problem:
\begin{equation} \label{defpbbounddual}
f_* =
\left\{
\begin{array}{l}
\min f(y)\\
Ay= b, \,g(y) \leq 0,\;y \in Y 
\end{array}
\right.
\end{equation}
where 
\begin{itemize}
\item[(i)] $Y \subset \mathbb{R}^n$ is a closed convex set and $A$ is a $q \small{\times}  n$ matrix;
\item[(ii)] $f$ is convex Lipschitz continuous with Lipschitz constant $L( f )$ on $Y$;
\item[(iii)] all components of $g$ are convex Lipschitz continuous functions
with Lipschitz constant $L( g )$ on $Y$;
\item[(iv)] $f$ is bounded from below on the feasible set.
\end{itemize}
We assume the following Slater type constraint qualification:
\begin{equation}\label{slaterboundmulti}
\mbox{SL: There exist }\kappa>0 \mbox{ and }y_0 \in \mbox{ri}( Y ) \mbox{ such that }g( y_0 ) \leq -\kappa {\textbf{e}} \mbox{ and }A y_0 = b   
\end{equation}
where {\textbf{e}} is a vector of ones in $\mathbb{R}^p$. \\

Since SL holds, the optimal value $f_*$ of \eqref{defpbbounddual} can be written as the optimal value of the dual problem:
\begin{equation}\label{dualfirst}
f_* = \displaystyle \max_{\mu \geq 0, \lambda} \left\{ \theta(\lambda, \mu ) := \displaystyle \min_{y \in Y} \{ f(y) + \langle \lambda , Ay -b \rangle + \langle \mu , g(y) \rangle  \} \right\}.
\end{equation}

Consider the vector space $F=A\mbox{Aff}(Y) - b$ where Aff($Y$) is the affine span of $Y$.
Clearly for any $y\in Y$ and every $\lambda \in F^{\perp}$ we have $\lambda^T ( A y - b) = 0$ and therefore
for every $\lambda \in \mathbb{R}^q$, $\theta(\lambda , \mu )=\theta(\Pi_{F}( \lambda ) , \mu )$
where $\Pi_{F}( \lambda )$ is the orthogonal projection of $\lambda$ onto $F$.

It follows that if $F^{\perp} \neq \{ 0 \}$, the set of $\epsilon$-optimal dual solutions of
dual problem \eqref{dualfirst} is not bounded because from any $\epsilon$-optimal dual solution
$(\lambda(\varepsilon), \mu(\varepsilon))$ we can build an $\epsilon$-optimal dual solution
$(\lambda(\varepsilon)+\lambda, \mu(\varepsilon))$ with the same value of the dual function of norm
arbitrarily large taking $\lambda$ in $F^{\perp}$ with norm sufficiently large. 

However, the optimal value of the dual (and primal) problem can be written equivalently as
\begin{equation}\label{dualreform}
f_* = \displaystyle \max_{\lambda, \mu} \left\{  \theta(\lambda , \mu )  : \mu \geq 0, \lambda = A y-b, y \in \mbox{Aff}(Y) \right\}. 
\end{equation}

In this section, our goal is to derive bounds on the norm of $\epsilon$-optimal solutions to the dual of \eqref{defpbbounddual}
written in the form \eqref{dualreform}.

In what follows, we denote the $\|\cdot\|_2$-ball of radius $r$ and center $y_0$ in $\mathbb{R}^n$ by
$\mathbb{B}_n( y_0 , r)$.
From Assumption SL, we deduce that there is $r>0$ such that $\mathbb{B}_n( y_0 , r)  \cap   \mbox{Aff}(Y) \subseteq Y$
and that there is some ball $\mathbb{B}_q( 0, \rho_*)$ of positive radius $\rho_*$ such that
the intersection of this ball and of the set $A \mbox{Aff}(Y) - b$ is contained in the set 
$A\Big( \mathbb{B}_n( y_0 , r)  \cap   \mbox{Aff}(Y)  \Big)-b$. To define such $\rho_*$, let
$\rho: A \mbox{Aff}(Y)-b \rightarrow \mathbb{R}_{+}$ given by
$$
\rho(z)=\max \left\{t \|z\| \;:\;t\geq 0, tz \in A( \mathbb{B}_n( y_0 , r)  \cap   \mbox{Aff}(Y) )-b\right\}.
$$
Since $y_0 \in Y$, we can write $\mbox{Aff}(Y)=y_0 + V_Y$ where $V_Y$ is the vector space
$V_Y = \{x-y,\;x,y \in \mbox{Aff}(Y)\}$. Therefore
$$
A( \mathbb{B}_n( y_0 , r)  \cap   \mbox{Aff}(Y) )-b= A ( \mathbb{B}_n (0, r ) \cap V_Y )
$$
and $\rho$ can be reformulated as
\begin{equation}\label{definitionrho}
\rho(z)=\max \left\{t \|z\| \;:\;t\geq 0, tz \in A ( \mathbb{B}_n (0, r ) \cap V_Y )\right\}.
\end{equation}
Note that $\rho$ is well defined and finite valued (we have $ 0\leq \rho(z) \leq \|A\|r$).
Also, clearly $\rho( 0 ) = 0$ and
$\rho(z)=\rho(\lambda z)$ for every $\lambda > 0$ and $z \neq 0$. 
Therefore if $A=0$ then  $\rho_*$ can be any positive real, for instance $\rho_*=1$,
and if $A \neq 0$ 
we define 
\begin{equation}\label{defrhostar}
\rho_* = \min \{\rho(z)\;:\;z\neq 0, z \in A \mbox{Aff}(Y)-b\} =  \min \{\rho(z)\;:\;\|z\|=1, z \in A V_Y \},
\end{equation}
which is well defined and positive since $\rho(z)>0$ for every $z$ such that  $\|z\|=1, z \in A \mbox{Aff}(Y)-b$
(indeed if $z \in A \mbox{Aff}(Y)-b$ with $\|z\|=1$ then $z = A y-b$ for some $y \in\mbox{Aff}(Y), y \neq y_0$,
and since 
$$
\frac{r}{\|y-y_0\|} z = A\Big( y_0  + r \frac{y- y_0}{\|y-y_0\|} \Big)- b \in A\Big( \mathbb{B}_n( y_0 , r)  \cap   \mbox{Aff}(Y)  \Big)-b,
$$
we have $\rho(z) \geq \frac{r}{\|y-y_0\|}\|z\|=\frac{r}{\|y-y_0\|}>0$). We now claim that parameter $\rho_*$ we have just defined satisfies
our requirement namely
\begin{equation}\label{deductionfromSL}
\mathbb{B}_q( 0, \rho_*) \cap \Big(  A  \mbox{Aff}(Y) - b \Big)  \subseteq A\Big( \mathbb{B}_n( y_0 , r)  \cap   \mbox{Aff}(Y)  \Big)-b.
\end{equation}
This can be rewritten as 
\begin{equation}\label{deductionfromSLrewr}
\mathbb{B}_q( 0, \rho_*) \cap A  V_Y  \subseteq A\Big( \mathbb{B}_n( 0 , r)  \cap  V_Y  \Big).
\end{equation}
Indeed, let $z \in \mathbb{B}_q( 0, \rho_*) \cap \Big(  A \mbox{Aff}(Y) - b \Big)$.
If $A=0$ or $z=0$ then $ z \in A\Big( \mathbb{B}_n( y_0 , r)  \cap   \mbox{Aff}(Y)  \Big)-b$.
Otherwise, by definition of $\rho$, we have $\rho(z ) \geq \rho_* \geq \|z\|$.
Let ${\bar t} \geq 0$ be such that ${\bar t} z \in A( \mathbb{B}_n( y_0 , r)  \cap   \mbox{Aff}(Y) )-b$ and
$\rho(z)={\bar t}\|z\|$. The relations $({\bar t}-1)\|z\| \geq 0$ and $z \neq 0$ imply $\bar t \geq 1$.
By definition of $\bar t$, we can write ${\bar t} z = Ay -b$ where $y \in \mathbb{B}_n( y_0 , r)  \cap   \mbox{Aff}(Y)$.
It follows that $z$ can be written
$$
z=A\Big(y_0 + \frac{y-y_0}{{\bar t}}  \Big) -b = A {\bar y} - b
$$
where $\displaystyle  \bar y = y_0 + \frac{y-y_0}{{\bar t}}  \in \mbox{Aff}(Y)$ and 
$\|\bar y - y_0\|= \displaystyle \frac{\|y-y_0\|}{\bar t} \leq \|y - y_0\| \leq r$ (because $\bar t \geq 1$ and $y \in \mathbb{B}_n( y_0 , r)$).
This  means that $z \in A\Big( \mathbb{B}_n( y_0 , r)  \cap   \mbox{Aff}(Y)  \Big)-b$, which proves inclusion \eqref{deductionfromSL}.

We are now in a position to state the main result of this section:
\begin{proposition}\label{propboundnormepsdualsol} Consider optimization problem \eqref{defpbbounddual} with optimal value $f_*$.
Let Assumptions (i)-(iv) and SL hold and let $(\lambda(\varepsilon), \mu(\varepsilon))$ be an $\varepsilon$-optimal solution
to the dual problem \eqref{dualreform} with optimal value $f_*$.
Let 
\begin{equation}\label{defrkappaLg}
0<r \leq \frac{\kappa}{2 L( g )}, 
\end{equation}
be such that the intersection of the ball $\mathbb{B}_n(y_0, r)$ 
and of Aff($Y$) is contained in $Y$ (this $r$ exists because $y_0 \in \mbox{ri}(Y)$).
If $A=0$ let $\rho_*=1$. Otherwise, let $\rho_*$ given by
\eqref{defrhostar} with  $\rho$ as in  \eqref{definitionrho}.
Let $\mathcal{L}$ be any lower bound on the optimal value
$f_*$ of \eqref{defpbbounddual}. Then we have
$$
\|(\lambda(\varepsilon), \mu(\varepsilon)) \| \leq \frac{f( y_0 ) - \mathcal{L} + \varepsilon + L( f ) r }{\min( \rho_* , \kappa/2)}.
$$
\end{proposition}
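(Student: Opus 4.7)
The plan is to exploit the $\varepsilon$-optimality condition
\begin{equation*}
\theta(\lambda(\varepsilon),\mu(\varepsilon)) \,\geq\, f_* - \varepsilon \,\geq\, \mathcal{L} - \varepsilon
\end{equation*}
together with the upper bound one obtains by evaluating the dual function $\theta(\lambda(\varepsilon),\mu(\varepsilon))$, which is a minimum over $y \in Y$, at a single cleverly chosen feasible point $y^* \in Y$. The point $y^*$ will be designed so that the three terms appearing in $f(y^*) + \langle \lambda(\varepsilon), Ay^*-b\rangle + \langle \mu(\varepsilon), g(y^*)\rangle$ simultaneously satisfy (i) $\langle \lambda(\varepsilon), Ay^* - b\rangle = -\rho_*\|\lambda(\varepsilon)\|$, (ii) $g(y^*) \leq -(\kappa/2)\textbf{e}$ (so that, since $\mu(\varepsilon) \geq 0$, the $\mu$-term is at most $-(\kappa/2)\|\mu(\varepsilon)\|_1$), and (iii) $f(y^*) \leq f(y_0) + L(f)\,r$.

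To construct such a $y^*$ when $\lambda(\varepsilon) \neq 0$, set $z = -\rho_* \lambda(\varepsilon)/\|\lambda(\varepsilon)\|$. The constraint $\lambda = Ay - b$, $y \in \mbox{Aff}(Y)$ in the reformulated dual \eqref{dualreform} places $\lambda(\varepsilon)$ in $A V_Y$, hence so does $z$, and $\|z\| = \rho_*$. Inclusion \eqref{deductionfromSLrewr}, which was already established just before the proposition, then furnishes $v \in V_Y$ with $\|v\| \leq r$ and $Av = z$. Setting $y^* := y_0 + v$ yields a point in $y_0 + V_Y = \mbox{Aff}(Y)$ with $\|y^* - y_0\| \leq r$, so $y^* \in \mathbb{B}_n(y_0,r) \cap \mbox{Aff}(Y) \subseteq Y$ by the choice of $r$. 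When $\lambda(\varepsilon) = 0$ (which covers the case $A = 0$, since Slater then forces $b = Ay_0 = 0$ and hence $\lambda(\varepsilon) = -b = 0$ in \eqref{dualreform}), simply take $y^* = y_0$ and property (i) holds trivially.

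Verification is now routine: (i) follows from $Ay^* - b = Av = z$; (ii) follows from the componentwise Lipschitz estimate $|g_i(y^*) - g_i(y_0)| \leq L(g)\|v\| \leq L(g) r \leq \kappa/2$ combined with $g_i(y_0) \leq -\kappa$; and (iii) follows from $|f(y^*) - f(y_0)| \leq L(f) r$. Plugging these into the upper bound on $\theta(\lambda(\varepsilon),\mu(\varepsilon))$ and combining with the lower bound yields
\begin{equation*}
\rho_* \|\lambda(\varepsilon)\| + \tfrac{\kappa}{2} \|\mu(\varepsilon)\|_1 \,\leq\, f(y_0) - \mathcal{L} + \varepsilon + L(f)\,r.
\end{equation*}
Using $\|\mu(\varepsilon)\|_2 \leq \|\mu(\varepsilon)\|_1$ (valid since $\mu(\varepsilon) \geq 0$), the left-hand side dominates $\min(\rho_*,\kappa/2)\bigl(\|\lambda(\varepsilon)\| + \|\mu(\varepsilon)\|_2\bigr)$, and the elementary bound $\|(\lambda(\varepsilon),\mu(\varepsilon))\| = \sqrt{\|\lambda(\varepsilon)\|^2 + \|\mu(\varepsilon)\|_2^2} \leq \|\lambda(\varepsilon)\| + \|\mu(\varepsilon)\|_2$ then delivers the stated inequality.

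The main obstacle is the geometric step of producing $y^* \in Y$ with $Ay^* - b$ pointing exactly opposite to $\lambda(\varepsilon)$ with prescribed norm $\rho_*$, all while remaining inside $Y$; however, this is precisely what inclusion \eqref{deductionfromSLrewr} was engineered to guarantee, so the technical heart of the argument has already been carried out in the construction of $\rho_*$ preceding the proposition, and what remains is a clean combination of weak duality, Slater's condition, and Lipschitz estimates.
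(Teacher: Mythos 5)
Your proposal is correct and follows essentially the same route as the paper: lower-bound $\theta(\lambda(\varepsilon),\mu(\varepsilon))$ by $\mathcal{L}-\varepsilon$ via $\varepsilon$-optimality, upper-bound it by evaluating the Lagrangian at a point of $\mathbb{B}_n(y_0,r)\cap\mbox{Aff}(Y)\subseteq Y$ whose image $Ay^*-b$ equals $-\rho_*\lambda(\varepsilon)/\|\lambda(\varepsilon)\|$ (obtained from the inclusion \eqref{deductionfromSL}/\eqref{deductionfromSLrewr}), and combine the Lipschitz estimates for $f$ and $g$ with $\|(\lambda,\mu)\|\leq\|\lambda\|+\|\mu\|_1$. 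The only cosmetic difference is that you phrase the construction through $v\in V_Y$ with $y^*=y_0+v$ rather than directly writing $z(\varepsilon)=A\bar y-b$, which is the same step in different notation.
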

\begin{proof} By definition of $(\lambda(\varepsilon), \mu(\varepsilon))$ and  $\mathcal{L}$, and using SL, we have
\begin{equation}\label{firstrelationboundepssol}
\mathcal{L} - \varepsilon \leq f_* - \varepsilon \leq \theta( \lambda(\varepsilon), \mu(\varepsilon)) . 
\end{equation}
Now define $z(\varepsilon)=0$ if $\lambda(\varepsilon)=0$ and
$z( \varepsilon )=-\frac{\rho_*}{\| \lambda( \varepsilon ) \| }  \lambda( \varepsilon )$ otherwise.
Observing that $z( \varepsilon ) \in \mathbb{B}_q( 0, \rho_*) \cap \Big(  A  \mbox{Aff}(Y) - b \Big)$
and using relation \eqref{deductionfromSL} we deduce that
$z( \varepsilon ) \in A\Big( \mathbb{B}_n( y_0 , r)  \cap   \mbox{Aff}(Y)  \Big)-b \subseteq AY -b$.
Therefore, we can write $z( \varepsilon ) = A {\bar y}-b$ for some
${\bar y} \in \mathbb{B}_n(y_0 , r) \cap \mbox{Aff}(Y) \subseteq Y$. Next, using the definition of $\theta$, we get 
$$
\begin{array}{lll}
\theta( \lambda(  \varepsilon ) , \mu ( \varepsilon  )  )   & \leq & 
f( \bar y ) + \lambda( \varepsilon )^T ( A {\bar y} - b ) + \mu( \varepsilon )^T g( \bar y ) \mbox{ since }{\bar y} \in Y,\\
 &\leq & f(y_0 ) + L( f ) r + z( \varepsilon )^T \lambda( \varepsilon ) + \mu( \varepsilon )^T g( y_0 ) + L(g) r  \| \mu( \varepsilon ) \|_1,\\
& \leq & f(y_ 0) + L( f ) r - \rho_*  \| \lambda( \varepsilon ) \| - \frac{\kappa}{2} \| \mu( \varepsilon ) \|_1  \mbox{ using SL and }\eqref{defrkappaLg},
\end{array}
$$
where for the second inequality we have used (ii),(iii), and $\|\bar y - y_0 \| \leq r$.
We obtain for $\|(\lambda(  \varepsilon ) , \mu ( \varepsilon  ))\| = \sqrt{ \|\lambda(  \varepsilon )\|^2  + \|\mu(  \varepsilon )\|^2  } $
the upper bound
\begin{equation}\label{rewritingfinaldualnorm}
\begin{array}{lll}
 \|\lambda(  \varepsilon )\| + \|\mu(  \varepsilon )\|
\leq  \|\lambda(  \varepsilon )\| + \|\mu(  \varepsilon )\|_1 \leq \frac{f(y_0 ) + L( f ) r - \theta(\lambda(  \varepsilon ) , \mu ( \varepsilon  ))}{\min( \rho_* , \kappa/2)}. 
\end{array}
\end{equation}
Combining \eqref{firstrelationboundepssol} with upper bound \eqref{rewritingfinaldualnorm} on $\|(\lambda(  \varepsilon ) , \mu ( \varepsilon  ))\|$, we obtain the desired bound. 
\end{proof}

We also have the following immediate corollary of Proposition \ref{propboundnormepsdualsol}:

\begin{corollary}\label{corpropboundnormepsdualsol} Under the assumptions of Proposition \ref{propboundnormepsdualsol}, let $\bar f$
be an upper bound on $f$ on the feasibility set of \eqref{defpbbounddual} and assume that
$\bar f$ is convex and Lipschitz continuous on $\mathbb{R}^n$ with Lipschitz constant $L( \bar f )$. Then we have for 
$\|(\lambda(\varepsilon), \mu(\varepsilon)) \|$ the bound 
$
\|(\lambda(\varepsilon), \mu(\varepsilon)) \| \leq \frac{ {\bar f} ( y_0 ) - \mathcal{L} + \varepsilon + L( \bar f ) r }{\min( \rho_* , \kappa/2)}.
$
\end{corollary}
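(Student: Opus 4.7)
The plan is to follow the proof of Proposition~\ref{propboundnormepsdualsol} line by line, modifying only the step where the Lipschitz bound on $f$ is invoked. I will construct the same auxiliary point $\bar y \in \mathbb{B}_n(y_0, r) \cap \mbox{Aff}(Y) \subseteq Y$ with $A\bar y - b = z(\varepsilon)$, where $z(\varepsilon) = -\rho_* \lambda(\varepsilon)/\|\lambda(\varepsilon)\|$ when $\lambda(\varepsilon) \neq 0$ and $z(\varepsilon) = 0$ otherwise. Combining $\varepsilon$-optimality of $(\lambda(\varepsilon), \mu(\varepsilon))$ with the lower bound $\mathcal{L} \leq f_*$ and weak duality gives
$$\mathcal{L} - \varepsilon \;\leq\; \theta(\lambda(\varepsilon), \mu(\varepsilon)) \;\leq\; f(\bar y) + \lambda(\varepsilon)^T(A\bar y - b) + \mu(\varepsilon)^T g(\bar y).$$

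The only new ingredient is to upper bound $f(\bar y)$ by an expression involving $\bar f$ rather than $f$: since $\bar y \in Y$ is close to the feasible Slater point $y_0$ (where $f(y_0) \leq \bar f(y_0)$) and $\bar f$ is Lipschitz on $\mathbb{R}^n$ with constant $L(\bar f)$, one obtains $f(\bar y) \leq \bar f(y_0) + L(\bar f) r$, which replaces the bound $f(y_0) + L(f) r$ used in the proof of Proposition~\ref{propboundnormepsdualsol}. The remaining two contributions are handled exactly as before: by construction of $z(\varepsilon)$, one has $\lambda(\varepsilon)^T(A\bar y - b) = -\rho_* \|\lambda(\varepsilon)\|$; and using the Slater inequality $g(y_0) \leq -\kappa {\textbf{e}}$, Lipschitz continuity of $g$ with constant $L(g)$, and the assumption $r \leq \kappa/(2L(g))$, one gets $\mu(\varepsilon)^T g(\bar y) \leq -\frac{\kappa}{2} \|\mu(\varepsilon)\|_1$.

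Substituting the three bounds into the display above yields
$$\mathcal{L} - \varepsilon \;\leq\; \bar f(y_0) + L(\bar f) r - \rho_* \|\lambda(\varepsilon)\| - \frac{\kappa}{2} \|\mu(\varepsilon)\|_1,$$
and the announced estimate then follows by rearrangement, together with $\|(\lambda(\varepsilon), \mu(\varepsilon))\| \leq \|\lambda(\varepsilon)\| + \|\mu(\varepsilon)\|_1$. The main delicate step is the substitution $f(\bar y) \leq \bar f(y_0) + L(\bar f) r$: the stated hypothesis only gives $\bar f \geq f$ on the feasibility set, whereas $\bar y$ is typically not feasible (it lies in $Y$ and satisfies $g(\bar y) \leq 0$, but in general $A\bar y \neq b$ unless $\lambda(\varepsilon) = 0$). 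The cleanest way to close this gap is to read the hypothesis as $\bar f \geq f$ on all of $Y$, so that $f(\bar y) \leq \bar f(\bar y) \leq \bar f(y_0) + L(\bar f) r$ by Lipschitz continuity of $\bar f$; otherwise a separate comparison argument along the segment joining $y_0$ and $\bar y$ is needed.
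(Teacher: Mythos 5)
Your proof is correct and is exactly the argument the paper intends: the paper states the corollary without proof as ``immediate,'' and the only change from the proof of Proposition \ref{propboundnormepsdualsol} is the substitution $f(\bar y)\le \bar f(y_0)+L(\bar f)\,r$ in place of $f(\bar y)\le f(y_0)+L(f)\,r$, all other steps (the construction of $z(\varepsilon)$ and $\bar y$, the bounds $\lambda(\varepsilon)^T(A\bar y-b)=-\rho_*\|\lambda(\varepsilon)\|$ and $\mu(\varepsilon)^T g(\bar y)\le-\tfrac{\kappa}{2}\|\mu(\varepsilon)\|_1$, and the final rearrangement) being identical. Your caveat is well taken: since $\bar y$ satisfies $A\bar y-b=z(\varepsilon)\neq 0$ in general it is not feasible, so the step $f(\bar y)\le\bar f(\bar y)$ requires reading ``upper bound on $f$'' as holding on $Y$ (or at least on $\mathbb{B}_n(y_0,r)\cap\mbox{Aff}(Y)$) rather than only on the feasibility set --- which is indeed what holds in the paper's application, where $\bar f_m=f_t(\cdot,x_n^k,\xi_m)+\mathcal{Q}_{t+1}(\cdot)$ dominates $F_t^k(\cdot,x_n^k,\xi_m)$ on all of $\mathcal{X}_t$.
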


\section{Inexact cuts in SDDP applied to multistage stochastic linear programs}\label{sec:insddpl}

\subsection{Problem formulation, assumptions, and algorithm}\label{sec:insddpl1}

We are interested in solution methods for linear Stochastic Dynamic Programming equations:
the first stage problem is 
\begin{equation}\label{firststodplp}
\mathcal{Q}_1( x_0 ) = \left\{
\begin{array}{l}
\min_{x_1 \in \mathbb{R}^n} c_1^T x_1 + \mathcal{Q}_2 ( x_1 )\\
A_{1} x_{1} + B_{1} x_{0} = b_1,
x_1 \geq 0
\end{array}
\right.
\end{equation}
for $x_0$ given and for $t=2,\ldots,T$, $\mathcal{Q}_t( x_{t-1} )= \mathbb{E}_{\xi_t}[ \mathfrak{Q}_t ( x_{t-1}, \xi_{t}  )  ]$ with
\begin{equation}\label{secondstodplp} 
\mathfrak{Q}_t ( x_{t-1}, \xi_{t}  ) = 
\left\{ 
\begin{array}{l}
\min_{x_t \in \mathbb{R}^n} c_t^T x_t + \mathcal{Q}_{t+1} ( x_t )\\
A_{t} x_{t} + B_{t} x_{t-1} = b_t,
x_t \geq 0,
\end{array}
\right.
\end{equation}
with the convention that $\mathcal{Q}_{T+1}$ is null and
where for $t=2,\ldots,T$, random vector $\xi_t$ corresponds to the concatenation of the elements in random matrices $A_t, B_t$ which have a known
finite number of rows and random vectors $b_t, c_t$.
Moreover, it is assumed that $\xi_1$ is not random. For convenience, we will denote 
$$
X_t(x_{t-1}, \xi_t):=\{x_t \in \mathbb{R}^n : A_{t} x_{t} + B_{t} x_{t-1} = b_t, \,x_t \geq 0 \}.
$$
We make the following assumptions:
\begin{itemize}
\item[(A0)] $(\xi_t)$ 
is interstage independent and
for $t=2,\ldots,T$, $\xi_t$ is a random vector taking values in $\mathbb{R}^K$ with a discrete distribution and
a finite support $\Theta_t=\{\xi_{t 1}, \ldots, \xi_{t M}\}$ while $\xi_1$ is deterministic, with 
vector $\xi_{t j}$ being the concatenation
of the elements in $A_{t j}, B_{t j}, b_{t j}, c_{t j}$.\footnote{To simplify notation and without loss of generality, we have assumed that the 
number of realizations $M$ of $\xi_t$, the size $K$ of $\xi_t$ and $n$ of $x_t$ do not depend on $t$.} 
\item[(A1-L)] The set $X_1(x_{0}, \xi_1 )$ is nonempty and bounded and for every $x_1 \in X_1(x_{0}, \xi_1 )$,
for every $t=2,\ldots,T$, for every realization $\tilde \xi_2, \ldots, \tilde\xi_t$ of $\xi_2,\ldots,\xi_t$,
for every $x_{\tau} \in X_{\tau}( x_{\tau-1} , \tilde \xi_{\tau}), \tau=2,\ldots,t-1$, the set $X_t( x_{t-1} , {\tilde \xi}_t )$
is nonempty and bounded.
\end{itemize}

We put $\Theta_1 = \{\xi_1\}$ and for $t\geq 2$
we set
$p_{t i}= \mathbb{P}(\xi_t = \xi_{t i}) >0, i=1,\ldots,M$.

ISDDP-LP applied to linear Stochastic Dynamic Programming equations \eqref{firststodplp}, \eqref{secondstodplp}
is a simple extension of SDDP where the  subproblems of the forward and backward passes
are solved approximately. At iteration $k$, for $t=2,\ldots,T$,
 function  $\mathcal{Q}_t$ is approximated by a  
 piecewise affine lower bounding function $\mathcal{Q}_t^k$ 
which is a maximum of affine lower bounding functions $\mathcal{C}_{t}^i$ called inexact cuts:
$$
\mathcal{Q}_t^k(x_{t-1} ) = \max_{0 \leq i \leq k} \mathcal{C}_{t}^i( x_{t-1}  ) \mbox{ with }\mathcal{C}_{t}^i (x_{t-1})=\theta_{t}^i + \langle \beta_{t}^i , x_{t-1} \rangle
$$
where coefficients $\theta_{t}^i, \beta_{t}^i$ are computed as explained below. The steps of ISDDP-LP are as follows.\\

\par {\textbf{ISDDP-LP, Step 1: Initialization.}} For $t=2,\ldots,T$, take for $\mathcal{C}_t^0 = \mathcal{Q}_t^0$ 
a known lower bounding affine function for $\mathcal{Q}_t$. Set the iteration count $k$ to 1 and $\mathcal{Q}_{T+1}^0 \equiv 0$.\\
\par {\textbf{ISDDP-LP, Step 2: Forward pass.}} We generate sample
${\tilde \xi}^k = (\tilde \xi_1^k, \tilde \xi_2^k,\ldots,\tilde \xi_T^k)$ from the distribution of $\xi^k \sim (\xi_1,\xi_2,\ldots,\xi_T)$,
with the convention that $\tilde \xi_1^k = \xi_1$. Using approximation $\mathcal{Q}_{t+1}^{k-1}$
of $\mathcal{Q}_{t+1}$  (computed at previous iterations), we compute a $\delta_t^k$-optimal feasible solution $x_t^k$ of the problem
\begin{equation}\label{pbforwardpasslp}
\left\{
\begin{array}{l}
\min_{x_t \in \mathbb{R}^n} x_t^T {\tilde c}_t^k + \mathcal{Q}_{t+1}^{k-1} ( x_t )\\
x_t \in X_t(x_{t-1}^k, {\tilde \xi}_{t}^k )
\end{array}
\right.
\end{equation}
for $t=1,\ldots,T$,
where $x_0^k=x_0$ and where $\tilde c_t^k$ is the realization of $c_t$ in $\tilde \xi_t^k$. For $k \geq 1$ and $t=1,\ldots,T$, define the function ${\underline{\mathfrak{Q}}}_t^k : \mathbb{R}^n {\small{\times}} \Theta_t \rightarrow  {\overline{\mathbb{R}}}$ by
\begin{equation}\label{backwardt0lp}
{\underline{\mathfrak{Q}}}_t^k (x_{t-1} , \xi_t   ) =  
\left\{
\begin{array}{l}
\min_{x_t \in \mathbb{R}^n} c_t^T x_t + \mathcal{Q}_{t+1}^k ( x_t )\\
x_t \in X_t(x_{t-1}, \xi_{t} ).
\end{array}
\right.
\end{equation}
With this notation, we have 
\begin{equation}\label{forwdefQlp}
{\underline{\mathfrak{Q}}}_t^{k-1} (x_{t-1}^k , {\tilde \xi}_t^k   )
\leq  \langle {\tilde c}_t^k ,  x_t^k \rangle + \mathcal{Q}_{t+1}^{k-1}( x_t^k ) \leq {\underline{\mathfrak{Q}}}_t^{k-1} (x_{t-1}^k , {\tilde \xi}_t^k   ) + \delta_t^k.
\end{equation}

\par {\textbf{ISDDP-LP, Step 3: Backward pass.}} 
The backward pass builds inexact cuts for $\mathcal{Q}_t$ at $x_{t-1}^k$ computed in the forward pass.
For $t=T+1$,  we have $\mathcal{Q}_{t}^k = \mathcal{Q}_{T+1}^k \equiv 0$, i.e., $\theta_{T+1}^k$ and $\beta_{T+1}^k$ are null. 
For $j=1,\ldots,M$, we solve approximately the problem
\begin{equation}\label{backwardTlp} 
\left\{ 
\begin{array}{l}
\displaystyle \min_{x_T \in \mathbb{R}^n} c_{T j}^T x_T \\
A_{T j} x_{T} + B_{T j} x_{T-1}^k = b_{T j},
x_T \geq 0,
\end{array}
\right.
\mbox{ with dual }
\left\{ 
\begin{array}{l}
\max_{\lambda} \lambda^T ( b_{T j} - B_{T j} x_{T-1}^k )\\
A_{T j}^T \lambda \leq c_{T j},
\end{array}
\right.
\end{equation}
and optimal value $\mathfrak{Q}_T ( x_{T-1}^k, \xi_{T j}  )$.
More precisely, let $\lambda_{T j}^k$ be an $\varepsilon_T^k$-optimal basic feasible solution of the dual problem above
(it is in particular an extreme point of the feasible set).
Therefore
$A_{T j}^T \lambda_{T j}^k \leq c_{T j}$ and 
\begin{equation}\label{epssolbackTlp}
\mathfrak{Q}_T ( x_{T-1}^k, \xi_{T j}  ) - \varepsilon_T^k \leq   \langle \lambda_{T j}^k , b_{T j} - B_{T j} x_{T-1}^k \rangle  \leq \mathfrak{Q}_T ( x_{T-1}^k, \xi_{T j}  ). 
\end{equation}
We 
compute 
\begin{equation}\label{thetaTkbetaTk}
\theta_{T}^k= \sum_{j=1}^{M} p_{T j} \langle   b_{T j}, \lambda_{T j}^k \rangle \mbox{ and }\beta_{T}^k = -\sum_{j=1}^{M} p_{T j} B_{T j}^T \lambda_{T j}^k.
\end{equation}
Using Proposition \ref{inexactlp} we have that $\mathcal{C}_{T}^k (x_{T-1})=\theta_{T}^k + \langle \beta_{T}^k , x_{T-1} \rangle$ is an inexact cut for
$\mathcal{Q}_T$ at $x_{T-1}^k$. Using \eqref{epssolbackTlp}, we also see that 
\begin{equation}\label{qualitycutT}
\mathcal{Q}_T( x_{T-1}^k ) - \mathcal{C}_T^k ( x_{T-1}^k ) \leq \varepsilon_T^k. 
\end{equation}

Then for $t=T-1$ down to $t=2$, knowing $\mathcal{Q}_{t+1}^k \leq \mathcal{Q}_{t+1}$,
for $j=1,\ldots,M$, consider the optimization problem
\begin{equation}\label{backwardtlp}
{\underline{\mathfrak{Q}}}_t^k ( x_{t-1}^k, \xi_{t j}  ) = 
\left\{ 
\begin{array}{l}
\displaystyle \min_{x_t} c_{t j}^T x_t + \mathcal{Q}_{t+1}^k ( x_t ) \\
x_t \in X_t( x_{t-1}^k , \xi_{t j} )
\end{array}
\right.
=
\left\{ 
\begin{array}{l}
\displaystyle \min_{x_t, f} c_{t j}^T x_t + f \\
A_{t j} x_{t} + B_{t j} x_{t-1}^k = b_{t j}, x_t \geq 0,\\
f \geq \theta_{t+1}^i + \langle \beta_{t+1}^i , x_t  \rangle, i=1,\ldots,k,
\end{array}
\right.
\end{equation}
with optimal value ${\underline{\mathfrak{Q}}}_t^k ( x_{t-1}^k, \xi_{t j}  )$.
Observe that due to (A1-L) the above problem is feasible and has a finite optimal value. Therefore ${\underline{\mathfrak{Q}}}_t^k ( x_{t-1}^k, \xi_{t j}  )$
can be expressed as the optimal value of the corresponding dual problem:
\begin{equation}\label{dualpbtbacklp}
{\underline{\mathfrak{Q}}}_t^k ( x_{t-1}^k, \xi_{t j}  ) = 
\left\{
\begin{array}{l}
\displaystyle \max_{\lambda, \mu} \lambda^T( b_{t j} - B_{t j} x_{t-1}^k   ) + \sum_{i=1}^k \mu_{i} \theta_{t+1}^i  \\
A_{t j}^T \lambda +\displaystyle  \sum_{i=1}^k \mu_{i} \beta_{t+1}^i \leq c_{t j},\;\sum_{i=1}^k \mu_{i}=1,\\
\mu_{i} \geq 0,\,i=1,\ldots,k.
\end{array}
\right.
\end{equation}
Let $(\lambda_{t j}^k, \mu_{t j}^k )$ be an  $\varepsilon_t^k$-optimal basic feasible solution of dual problem \eqref{dualpbtbacklp}
(it is in particular an extreme point of the feasible set) and let ${\underline{\mathcal{Q}}}_t^k$ be the function given by ${\underline{\mathcal{Q}}}_t^k ( x_{t-1} ) = \sum_{j=1}^{M} p_{t j} {\underline{\mathfrak{Q}}}_t^k( x_{t-1} , \xi_{t j} )$.
We compute 
\begin{equation}\label{formulathetabetatk}
\theta_{t}^k =\sum_{j=1}^{M} p_{t j}\Big(  \langle  \lambda_{t j}^k ,  b_{t j} \rangle +  \langle    \mu_{t j}^k , \theta_{t+1, k} \rangle \Big) \mbox{ and }
\beta_{t}^k =- \sum_{j=1}^{M} p_{t j}B_{t j}^T \lambda_{t j}^k,
\end{equation}
where $i$-th component $\theta_{t+1, k}(i)$  of vector $\theta_{t+1, k}$  is $\theta_{t+1}^i$ for $i=1,\ldots,k$.
 Setting $\mathcal{C}_t^k ( x_{t-1} ) = \theta_t^k + \langle \beta_t^k , x_{t-1} \rangle$ and using Proposition \ref{inexactlp}, we have 
\begin{equation}\label{cutt}
{\underline{\mathcal{Q}}}_t^k ( x_{t-1} ) \geq \mathcal{C}_t^k ( x_{t-1} ) \;\;\mbox{ and }\;\;{\underline{\mathcal{Q}}}_t^k ( x_{t-1}^k ) - \mathcal{C}_t^k  ( x_{t-1}^k )    \leq \varepsilon_t^k.
\end{equation}
Using the fact that $\mathcal{Q}_{t+1}^k( x_{t-1} )  \leq \mathcal{Q}_{t+1}( x_{t-1} )$, we have 
${\underline{\mathfrak{Q}}}_t^k(x_{t-1}, \xi_{t j}) \leq \mathfrak{Q}_t(x_{t-1}, \xi_{t j})$,
${\underline{\mathcal{Q}}}_t^k(x_{t-1}) \leq \mathcal{Q}_t(x_{t-1})$,
and therefore 
\begin{equation}
\mathcal{Q}_t ( x_{t-1} ) \geq \mathcal{C}_t^k ( x_{t-1} ) 
\end{equation}
which shows that $\mathcal{C}_t^k$ is an inexact cut for $\mathcal{Q}_t$.\\
\par {\textbf{ISDDP-LP, Step 4:} Do $k \leftarrow k+1$ and go to Step 2.\\

Following the proof of Lemma 1 in \cite{philpot}, we obtain that
for all $t=2,\ldots,T+1$,
the collection of distinct values $(\theta_t^k, \beta_t^k)_k$
is finite and therefore cut coefficients $(\theta_t^k, \beta_t^k)_k$
are uniformly bounded. Observe that this proof uses the fact that 
$(\lambda_{t j}^k , \mu_{t j}^k)$ are extreme points of the feasible set of 
\eqref{dualpbtbacklp}. There could however be unbounded sequences 
of approximate optimal feasible solutions to \eqref{dualpbtbacklp}.

\if{
\par The previous algorithm can be modified solving in the backward pass some of the subproblems only. 
This gives rise to Inexact DOASA whose steps are given below in the case when the randomness only appears
in the right-hand side of the constraints, namely when $\xi_t=b_t$.\\

\par Steps 1, 2, and 4 of IDOASA are identical to Steps 1, 2, and 4 of ISDDP-LP.\\
\par  {\textbf{Step 4 of IDOASA is as follows}}: set $\mathcal{D}_T^k = \emptyset$,
take a sample $\Theta_T^k \subset \Theta_T$
and for each $b_{T j} \in \Theta_t^k$ compute an $\varepsilon_T^k$-optimal solution $\lambda_{T j}^k$
of dual problem \eqref{backwardT} and add $\lambda_{T j}^k$ to set $\mathcal{D}_T^k$. For each $b_{T j} \notin \Theta_t^k$, compute 
$$
\lambda_{T j}^k = \mbox{argmax} \{ \lambda^T(b_{T j} - B_{T j} x_{T-1}^k ) \;  : \; \lambda \in \mathcal{D}_T^k\}.
$$
Compute $\theta_T^k, \beta_{T}^k$ by \eqref{thetaTkbetaTk}.
For $t=T-1$ down to $t=2$, set $\mathcal{D}_t^k = \emptyset$, take a sample $\Theta_t^k \subset \Theta_t$
and for each $b_{t j} \in \Theta_t^k$ compute an $\varepsilon_t^k$-optimal solution $(\lambda_{t j}^k, \mu_{t j}^k)$
of dual problem \eqref{dualpbtback} and add $(\lambda_{t j}^k, \mu_{t j}^k )$ to set $\mathcal{D}_t^k$. For each $b_{t j} \notin \Theta_t^k$, compute 
$$
(\lambda_{t j}^k , \mu_{t j}^k )  = \mbox{argmax} \{\lambda^T( b_{t j} - B_{t j} x_{t-1}^k   ) + \sum_{i=1}^k \mu_{i} \theta_{t+1}^i  \;  : \; (\lambda, \mu) \in \mathcal{D}_t^k \}.
$$
Compute $\theta_t^k, \beta_{t}^k$ by \eqref{formulathetabetatk}.\\

\par To put it in a nutshell, IDOASA is a variant of DOASA where all subproblems solved in the backward and 
forward passes are solved approximately, namely $\varepsilon_t^k$-optimal solutions are computed for the subproblems
of stage $t$ and iteration $k$.
Observe that since $\xi_t=b_t$, we have $A_{t j}=A_t$, $c_{t j}=c_T$ and therefore
for fixed $k$ dual problems \eqref{backwardT} (when $b_{T j}$ varies in $\Theta_T^k$)
have the same feasible sets. Similarly, for fixed $k$ dual problems \eqref{dualpbtback} 
(when $b_{t j}$ varies in $\Theta_t^k$) have the same feasible sets. This justifies that 
$\mathcal{C}_t^k$ computed by IDOASA is a valid cut (an affine lower bounding function) for $\mathcal{Q}_t$.
}\fi

\subsection{Convergence analysis}\label{conv-analsddp}

In this section we state a convergence result for ISDDP-LP in Theorem \ref{convisddplp} when errors $\delta_t^k, \varepsilon_t^k$
are bounded and in Theorem \ref{convisddp1} when these errors vanish asymptotically.

We will need the following simple extension of \cite[Lemma A.1]{lecphilgirar12}:
\begin{lemma}\label{limsuptechlemma} Let $X$ be a compact set, let 
$f: X \rightarrow \mathbb{R}$ be Lipschitz continuous, and suppose that the sequence of $L$-Lipschitz continuous functions 
$f^k, k \in \mathbb{N}$ satisfies $f^{k}(x) \leq f^{k+1}(x) \leq f(x) \;\mbox{for all }x \in X,\;k \in \mathbb{N}$.
Let $(x^k)_{k \in \mathbb{N}}$ be a sequence in $X$ and  assume that 
\begin{equation}\label{limsuphyp}
\varlimsup_{k \rightarrow +\infty} f( x^k ) -f^k( x^k ) \leq S 
\end{equation}
for some $S \geq 0$.
Then
\begin{equation}\label{limsuphypkm1}
\varlimsup_{k \rightarrow +\infty} f( x^k ) -f^{k-1}( x^k ) \leq S. 
\end{equation}
\end{lemma}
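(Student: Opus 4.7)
The plan is to reduce the claim to uniform convergence on the compact set $X$, which would make the shift from index $k$ to $k-1$ evaluated at the same point $x^k$ inconsequential. First I would introduce the gap functions $g_k := f - f^k$ on $X$. By the sandwich hypothesis $f^{k}\leq f^{k+1}\leq f$, the sequence $(g_k)$ is pointwise-decreasing and nonnegative. Each $g_k$ is Lipschitz with a common constant $L + L_f$, where $L_f$ denotes a Lipschitz constant of $f$ on $X$.

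From here the pointwise limit $g_\infty(x) := \lim_k g_k(x)$ exists by monotone convergence, is nonnegative, and is itself $(L+L_f)$-Lipschitz (by passing to the limit in the Lipschitz inequality for $g_k$). In particular $g_\infty$ is continuous on $X$, which enables me to invoke Dini's theorem: a monotone sequence of continuous functions on a compact set, with continuous pointwise limit, converges uniformly. Therefore $\|g_{k-1}-g_k\|_\infty \to 0$, and hence
\begin{equation*}
|g_{k-1}(x^k) - g_k(x^k)| \leq \|g_{k-1}-g_k\|_\infty \longrightarrow 0
\end{equation*}
regardless of the particular behaviour of the sequence $(x^k)$ inside $X$.

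Combining this with hypothesis \eqref{limsuphyp} then yields
\begin{equation*}
\varlimsup_{k \rightarrow +\infty}\bigl(f(x^k) - f^{k-1}(x^k)\bigr) \;=\; \varlimsup_{k \rightarrow +\infty} g_{k-1}(x^k) \;=\; \varlimsup_{k \rightarrow +\infty} g_k(x^k) \;\leq\; S,
\end{equation*}
which is exactly \eqref{limsuphypkm1}. There is no real analytical obstacle: the only thing to verify carefully is the continuity of $g_\infty$, which is automatic from the equi-Lipschitz property, and it is precisely this continuity that justifies Dini's theorem and lets the argument go through without having to extract any convergent subsequence from $(x^k)$ itself.
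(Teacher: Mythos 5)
Your proof is correct, but it takes a genuinely different route from the paper's. You reduce the statement to uniform convergence: the gap functions $g_k := f - f^k$ form a nonincreasing, nonnegative, equi-Lipschitz sequence on the compact set $X$; their pointwise limit $g_\infty$ inherits the common Lipschitz constant and is therefore continuous, so Dini's theorem gives $\|g_k - g_\infty\|_\infty \to 0$ and hence $\|g_{k-1}-g_k\|_\infty \to 0$, after which the index shift at the moving point $x^k$ is harmless. Every step checks out: the monotonicity and nonnegativity of $(g_k)$ follow from the sandwich hypothesis, the Lipschitz bound on $g_\infty$ is obtained by passing to the limit in the Lipschitz inequality, and all hypotheses of Dini's theorem (compactness, monotonicity, continuity of the limit) are verified rather than assumed. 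The paper instead argues by contradiction: it extracts a subsequence $\sigma(k)$ along which $f(x^{\sigma(k)}) - f^{\sigma(k)-1}(x^{\sigma(k)}) > S + \varepsilon_0$, uses compactness of $X$ to pass to an accumulation point $x_*$ of $(x^{\sigma(k)})$, and combines the $L$-Lipschitz estimates with the hypothesis \eqref{limsuphyp} to show $f^{\sigma(k)}(x_*) - f^{\sigma(k-1)}(x_*) > \varepsilon_0/4$ for all $k$, contradicting the bound $f^{\sigma(k)}(x_*) \leq f(x_*)$. Your argument is more conceptual and proves a strictly stronger intermediate fact (uniform convergence of $f^k$, so the conclusion holds for any sequence $(x^k)$ and any index shift of bounded lag), at the cost of invoking Dini's theorem; the paper's argument is self-contained and elementary but is a delicate subsequence-extraction by contradiction. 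Both use compactness of $X$ and the equi-Lipschitz property in an essential way.
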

\begin{proof} Let us show \eqref{limsuphypkm1} by contradiction.
Assume that  \eqref{limsuphypkm1} does not hold. Then there exist $\varepsilon_0>0$ and $\sigma: \mathbb{N} \rightarrow \mathbb{N}$
increasing 
such that for every $k \in \mathbb{N}$ we have
\begin{equation}\label{firstlemma}
f( x^{\sigma(k)} ) -f^{\sigma(k)-1}( x^{\sigma(k)} ) > S + \varepsilon_0.
\end{equation}
Since $x^{\sigma(k)}$ is a sequence of the compact set $X$, it has some convergent subsequence which converges to some $x_* \in X$.
Taking into account \eqref{limsuphyp} and the fact that $f^k$ are $L$-Lipschitz continuous, we can take $\sigma$
such that \eqref{firstlemma} holds and
\begin{eqnarray}
f( x^{\sigma(k)} ) - f^{\sigma(k)}( x^{\sigma(k)} ) &\leq& S + \frac{\varepsilon_0}{4},\label{seclemma}\\
f^{\sigma(k)-1}( x^{\sigma(k)} ) - f^{\sigma(k)-1}( x_* )& >& -\frac{\varepsilon_0}{4},\label{thlemma}\\
f^{\sigma(k)}( x_* ) - f^{\sigma(k)}( x^{\sigma(k)} ) &>&  -\frac{\varepsilon_0}{4}.\label{fourlemma}
\end{eqnarray}
Therefore for every $k \geq 1$ we get
$$
\begin{array}{lll}
f^{\sigma(k)}( x_* ) - f^{\sigma(k-1)}( x_* ) & \geq & f^{\sigma(k)}( x_* ) - f^{\sigma(k)-1}( x_* )\mbox{ since }\sigma(k) \geq \sigma(k-1) + 1,\\
& = & f^{\sigma(k)}( x_* ) - f^{\sigma(k)}( x^{\sigma(k)} )\;\;(>-\varepsilon_0/4\mbox{ by }\eqref{fourlemma}),\\
 & &  + f^{\sigma(k)}( x^{\sigma(k)} ) -   f( x^{\sigma(k)} )\;\;(\geq-S-\varepsilon_0/4\mbox{ by }\eqref{seclemma}),\\
 && + f( x^{\sigma(k)} )  -  f^{\sigma(k)-1}( x^{\sigma(k)} )\;\;(>S+\varepsilon_0\mbox{ by }\eqref{firstlemma}),\\
 &  &+  f^{\sigma(k)-1}( x^{\sigma(k)} )    - f^{\sigma(k)-1}( x_* )\;\;(>-\varepsilon_0/4\mbox{ by }\eqref{thlemma}),\\
 & > & \varepsilon_0/4,
\end{array}
$$
which implies $f^{\sigma(k)}( x_* ) > f^{\sigma(0)}( x_* ) + k \frac{\varepsilon_0}{4}$. This is in contradiction with the fact that the sequence $f^{\sigma(k)}( x_* )$ is bounded from above by $f(x_*)$.
\end{proof}

We will assume that the sampling procedure in ISDDP-LP satisfies the following property:\\

\par (A2) The samples in the backward passes are independent: $(\tilde \xi_2^k, \ldots, \tilde \xi_T^k)$ is a realization of
$\xi^k=(\xi_2^k, \ldots, \xi_T^k) \sim (\xi_2, \ldots,\xi_T)$ 
and $\xi^1, \xi^2,\ldots,$ are independent.\\

Before stating our first convergence theorem, we need more notation. Due to Assumption (A0), the realizations of $(\xi_t)_{t=1}^T$ form a scenario tree of depth $T+1$
where the root node $n_0$ associated to a stage $0$ (with decision $x_0$ taken at that
node) has one child node $n_1$
associated to the first stage (with $\xi_1$ deterministic).
We denote by $\mathcal{N}$ the set of nodes and for a node $n$ of the tree, we define: 
\begin{itemize}
\item $C(n)$: the set of children nodes (the empty set for the leaves);
\item $x_n$: a decision taken at that node;
\item $p_n$: the transition probability from the parent node of $n$ to $n$;
\item $\xi_n$: the realization of process $(\xi_t)$ at node $n$\footnote{The same notation $\xi_{\tt{Index}}$ is used to denote
the realization of the process at node {\tt{Index}} of the scenario tree and the value of the process $(\xi_t)$
for stage {\tt{Index}}. The context will allow us to know which concept is being referred to.
In particular, letters $n$ and $m$ will only be used to refer to nodes while $t$ will be used to refer to stages.}:
for a node $n$ of stage $t$, this realization $\xi_n$ contains in particular the realizations
$c_n$ of $c_t$, $b_n$ of $b_t$, $A_{n}$ of $A_{t}$, and $B_{n}$ of $B_{t}$.
\end{itemize}

Next, we define for iteration $k$ decisions $x_n^k$ for all node $n$ of the scenario tree
simulating the policy obtained in the end of iteration $k-1$ replacing 
cost-to-go function $\mathcal{Q}_t$ by 
$\mathcal{Q}_{t}^{k-1}$ for $t=2,\ldots,T+1$:\\
\rule{\linewidth}{1pt}
{\textbf{Simulation of the policy in the end of iteration $k-1$.}}\\
\hspace*{0.4cm}{\textbf{For }}$t=1,\ldots,T$,\\
\hspace*{0.8cm}{\textbf{For }}every node $n$ of stage $t-1$,\\
\hspace*{1.6cm}{\textbf{For }}every child node $m$ of node $n$, compute a $\delta_t^k$-optimal solution $x_m^k$ of
\begin{equation} \label{defxtkj}
{\underline{\mathfrak{Q}}}_t^{k-1}( x_n^k , \xi_m ) = \left\{
\begin{array}{l}
\displaystyle \inf_{x_m} \; c_m^T x_m  + \mathcal{Q}_{t+1}^{k-1}( x_m ) \\
x_m \in X_t( x_n^k, \xi_m ),
\end{array}
\right.
\end{equation}
\hspace*{2.4cm}where $x_{n_0}^k = x_0$.\\
\hspace*{1.6cm}{\textbf{End For}}\\
\hspace*{0.8cm}{\textbf{End For}}\\
\hspace*{0.5cm}{\textbf{End For}}\\
\rule{\linewidth}{1pt}\\ 
We are now in a position to state our first convergence theorem for ISDDP-LP:
\begin{theorem}[Convergence of ISDDP-LP with bounded errors] \label{convisddplp}
Consider the sequences of decisions $(x_n^k)_{n \in \mathcal{N}}$ and of functions $(\mathcal{Q}_t^k)$ generated by ISDDP-LP.
Assume that (A0), (A1-L), and (A2) hold, and that 
errors $\varepsilon_t^k$ and $\delta_t^k$ are bounded: $0 \leq \varepsilon_t^k \leq {\bar \varepsilon}$,
$0 \leq \delta_t^k \leq {\bar \delta}$ for finite ${\bar \delta}, {\bar \varepsilon}$. Then the following holds:
\begin{itemize}
\item[(i)] for $t=2,\ldots,T+1$, for all node $n$ of stage $t-1$,  almost surely
\begin{equation}\label{lowerbounds}
0 \leq \varliminf_{k \rightarrow +\infty} \mathcal{Q}_t( x_{n}^k ) - \mathcal{Q}_t^k( x_{n}^k ) \leq  
\varlimsup_{k \rightarrow +\infty} \mathcal{Q}_t( x_{n}^k ) - \mathcal{Q}_t^k( x_{n}^k ) \leq ({\bar \delta}  +  {\bar{\varepsilon}})(T-t+1);
\end{equation}
\item[(ii)] for every $t=2,\ldots,T$, for all node $n$ of stage $t-1$,
the limit superior and limit inferior of the sequence of upper bounds  $\Big( \displaystyle  \sum_{m \in C(n)} p_m (  c_m^T x_m^k   + \mathcal{Q}_{t+1}( x_m^k ))  \Big)_k$ satisfy almost surely
\begin{equation}\label{uppbounds}
\begin{array}{l}
0 \leq 
\varliminf_{k \rightarrow +\infty} \displaystyle \sum_{m \in C(n)} p_m \Big[ c_m^T x_m^k   + \mathcal{Q}_{t+1}( x_m^k ) \Big] - \mathcal{Q}_t( x_n^k ),  \\
\varlimsup_{k \rightarrow +\infty} \displaystyle  \sum_{m \in C(n)} p_m \Big[ c_m^T x_m^k   + \mathcal{Q}_{t+1}( x_m^k ) \Big] - \mathcal{Q}_t( x_n^k ) \leq ({\bar \delta} + {\bar{\varepsilon}})(T-t+1);
\end{array}
\end{equation}
\item[(iii)]
the limit superior and limit inferior of the sequence   ${\underline{\mathfrak{Q}}}_1^{k-1}( x_{0} , \xi_1 )$ of lower bounds on the optimal value 
$\mathcal{Q}_1( x_0 )$ of \eqref{firststodplp} satisfy almost surely
\begin{equation}\label{lbounds}
\mathcal{Q}_1( x_{0} )- {\bar{\delta}} T   - {\bar{\varepsilon}} (T-1) \leq \varliminf_{k \rightarrow +\infty} {\underline{\mathfrak{Q}}}_1^{k-1}( x_{0} , \xi_1 ) \leq  
\varlimsup_{k \rightarrow +\infty} {\underline{\mathfrak{Q}}}_1^{k-1}( x_{0} , \xi_1 ) \leq \mathcal{Q}_1( x_{0} ).
\end{equation}
\end{itemize}
\end{theorem}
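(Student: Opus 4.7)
The argument follows the backward-induction template of \cite{philpot} for exact SDDP, adapted to the inexact setting. Three structural facts are used throughout: (a)~as remarked after the description of ISDDP-LP, for each $t$ the pairs $(\theta_t^k,\beta_t^k)$ take only finitely many distinct values, so every $\mathcal{Q}_t^k$ is Lipschitz with a constant independent of $k$; (b)~the approximations satisfy $\mathcal{Q}_t^k \leq \mathcal{Q}_t^{k+1} \leq \mathcal{Q}_t$; and (c)~under (A1-L) all decisions $x_n^k$ lie in a fixed compact set. Moreover, (A0) and (A2) combined with Borel--Cantelli imply that for each node $n$ the random set $\mathcal{K}_n$ of iterations whose forward-pass scenario traverses $n$ is almost surely infinite.

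The plan is to prove (i) by backward induction on $t$ from $T+1$ down to $2$. The base case is trivial since $\mathcal{Q}_{T+1}\equiv\mathcal{Q}_{T+1}^k\equiv 0$. For the induction step, fix a node $n$ at stage $t-1$; along the subsequence $\mathcal{K}_n$ the forward-pass decision $x_{t-1}^k$ coincides with $x_n^k$, so the inexact cut $\mathcal{C}_t^k$ is constructed at $x_n^k$. Proposition~\ref{inexactlp} yields $\mathcal{C}_t^k(x_n^k)\geq\underline{\mathcal{Q}}_t^k(x_n^k)-\varepsilon_t^k$, and if $y_{tj}^k$ denotes an optimal solution of $\underline{\mathfrak{Q}}_t^k(x_n^k,\xi_{tj})$, then $y_{tj}^k$ is feasible for the exact subproblem, so $\mathfrak{Q}_t(x_n^k,\xi_{tj})-\underline{\mathfrak{Q}}_t^k(x_n^k,\xi_{tj})\leq\mathcal{Q}_{t+1}(y_{tj}^k)-\mathcal{Q}_{t+1}^k(y_{tj}^k)$. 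The inductive hypothesis at stage $t+1$, together with (a)--(c) and Lemma~\ref{limsuptechlemma} used to interchange $\mathcal{Q}_{t+1}^k$ and $\mathcal{Q}_{t+1}^{k-1}$, bounds the $\limsup$ of this difference by $(\bar\delta+\bar\varepsilon)(T-t)$; adding $\varepsilon_t^k\leq\bar\varepsilon$ gives the bound $(\bar\delta+\bar\varepsilon)(T-t+1)$ on $\mathcal{Q}_t(x_n^k)-\mathcal{Q}_t^k(x_n^k)$ along $\mathcal{K}_n$. The bound extends to the full sequence via monotonicity~(b), the uniform bound on $\beta_t^{k'}$, and the Lipschitz estimate $\mathcal{Q}_t^k(x_n^k)\geq\mathcal{C}_t^{k'}(x_n^k)\geq\mathcal{C}_t^{k'}(x_n^{k'})-\|\beta_t^{k'}\|\,\|x_n^k-x_n^{k'}\|$ taken along $k'\in\mathcal{K}_n$, combined with a compactness argument on limit points of $(x_n^k)$. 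The lower inequality $\liminf\geq 0$ is immediate from (b).

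Parts (ii) and (iii) then follow from part (i) by direct algebra. For (ii), feasibility of $x_m^k$ for the exact subproblem and its $\delta_t^k$-optimality for the approximate one, combined with $\underline{\mathfrak{Q}}_t^{k-1}\leq\mathfrak{Q}_t$, give $c_m^T x_m^k+\mathcal{Q}_{t+1}(x_m^k)-\mathfrak{Q}_t(x_n^k,\xi_m)\leq\delta_t^k+[\mathcal{Q}_{t+1}(x_m^k)-\mathcal{Q}_{t+1}^{k-1}(x_m^k)]$; averaging over $m\in C(n)$ and applying part (i) at $t+1$ via Lemma~\ref{limsuptechlemma} produces the $\limsup$ bound, while $\liminf\geq 0$ follows from the same feasibility. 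For (iii), the same manipulation at $t=1$ yields $\underline{\mathfrak{Q}}_1^{k-1}(x_0,\xi_1)\geq\mathcal{Q}_1(x_0)-[\mathcal{Q}_2(x_1^k)-\mathcal{Q}_2^{k-1}(x_1^k)]-\delta_1^k$, so taking $\liminf$ and using part (i) at $t=2$ with Lemma~\ref{limsuptechlemma} produces the lower bound $\mathcal{Q}_1(x_0)-\bar\delta T-\bar\varepsilon(T-1)$; the upper bound $\limsup\underline{\mathfrak{Q}}_1^{k-1}(x_0,\xi_1)\leq\mathcal{Q}_1(x_0)$ is immediate from $\mathcal{Q}_2^{k-1}\leq\mathcal{Q}_2$.

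The hardest step will be the propagation in the inductive argument for (i): the inductive hypothesis controls the gap $\mathcal{Q}_{t+1}-\mathcal{Q}_{t+1}^k$ at the simulated decisions $x_m^k$ (obtained from the $\mathcal{Q}_{t+1}^{k-1}$-based policy), whereas the cut inequality exposes this gap at the distinct backward-pass optimizers $y_{tj}^k$ (arising from the $\mathcal{Q}_{t+1}^k$-based problem); bridging this mismatch, as well as propagating the bound from $\mathcal{K}_n$ to all $k$, relies on the uniform Lipschitz property~(a), the compactness~(c), and the interchange furnished by Lemma~\ref{limsuptechlemma}.
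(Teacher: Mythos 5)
Your overall template---backward induction on $t$, the chain from the cut inequality down to the stage-$(t+1)$ gap, Lemma~\ref{limsuptechlemma} to pass from $\mathcal{Q}_{t+1}^k$ to $\mathcal{Q}_{t+1}^{k-1}$, and the direct algebra for (ii) and (iii)---matches the paper, but the two steps you yourself flag as ``the hardest'' are genuine gaps, not technicalities. First, your decomposition $\mathfrak{Q}_t(x_n^k,\xi_{tj})-\underline{\mathfrak{Q}}_t^k(x_n^k,\xi_{tj})\leq\mathcal{Q}_{t+1}(y_{tj}^k)-\mathcal{Q}_{t+1}^k(y_{tj}^k)$ exposes the stage-$(t+1)$ gap at the backward-pass optimizers $y_{tj}^k$, whereas the induction hypothesis controls that gap only at the forward-pass points $x_m^k$. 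Uniform Lipschitz continuity of $\mathcal{Q}_{t+1}-\mathcal{Q}_{t+1}^k$ and compactness do not bridge this mismatch: you would need $\|y_{tj}^k-x_m^k\|\to 0$, which nothing guarantees (minimizers of nearby problems need not be close), and at arbitrary points the gap can remain as large as the global distance between $\mathcal{Q}_{t+1}$ and $\mathcal{Q}_{t+1}^k$, which is not controlled. The paper avoids the mismatch entirely by writing $\underline{\mathfrak{Q}}_t^k(x_n^k,\xi_m)\geq\underline{\mathfrak{Q}}_t^{k-1}(x_n^k,\xi_m)\geq c_m^Tx_m^k+\mathcal{Q}_{t+1}^{k-1}(x_m^k)-\delta_t^k$ (monotonicity of the approximations plus the $\delta_t^k$-optimality of the forward-pass point for the $\mathcal{Q}_{t+1}^{k-1}$-based problem, relation \eqref{forwdefQlp}) and then invoking feasibility of $x_m^k$ for the exact subproblem, so the residual gap sits exactly at $x_m^k$ where the induction hypothesis applies; this is also how the extra $\bar\delta$ enters the constant $(\bar\delta+\bar\varepsilon)(T-t+1)$.

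Second, your extension of the bound from $\mathcal{K}_n$ to all $k$ via the estimate $\mathcal{C}_t^{k'}(x_n^k)\geq\mathcal{C}_t^{k'}(x_n^{k'})-\|\beta_t^{k'}\|\,\|x_n^k-x_n^{k'}\|$ plus compactness is a deterministic argument, and it fails: there is no reason that a limit point of $(x_n^k)_{k\notin\mathcal{K}_n}$ along which the gap stays large should also be approached by points $x_n^{k'}$ with $k'\in\mathcal{K}_n$ and $k'\leq k$. This is precisely where (A2) must be used quantitatively, not merely to make $\mathcal{K}_n$ infinite. The paper argues by contradiction: if the gap (with $\mathcal{Q}_t^{k-1}$ in place of $\mathcal{Q}_t^k$, via monotonicity) exceeded the bound by $\varepsilon_0$ along an infinite set $K$ of iterations, then because the scenario draws are i.i.d.\ and independent of the past, the Strong Law of Large Numbers forces a positive fraction of $K$ to lie in $\mathcal{S}_n$, while Lemma~\ref{limsuptechlemma} applied along $\mathcal{S}_n$ shows only finitely many elements of $K$ can lie in $\mathcal{S}_n$. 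You should replace both steps accordingly. Your treatment of parts (ii) and (iii) is correct and coincides with the paper's.
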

\begin{proof} The proof is provided in the appendix.
\end{proof}

Theorem \ref{convisddp1} below shows the convergence of ISDDP-LP in a finite number of iterations when errors $\varepsilon_t^k, \delta_t^k$ vanish asymptotically.

\begin{theorem} [Convergence of ISDDP-LP with asymptotically vanishing errors] \label{convisddp1}
Consider the sequences of decisions $(x_n^k)_{n \in \mathcal{N}}$ and of functions $(\mathcal{Q}_t^k)$ generated by ISDDP-LP.
Let Assumptions (A0), (A1-L), and (A2) hold.
If for all $t=1,\ldots,T$,
$\displaystyle \lim_{k \rightarrow +\infty} \delta_t^k = 0 $ and
for all $t=1,\ldots,T-1$, $\lim_{k \rightarrow +\infty} \varepsilon_t^k = 0$,
then ISDDP-LP converges with probability one in a finite number of iterations to an optimal solution to
\eqref{firststodplp}, \eqref{secondstodplp}.
\end{theorem}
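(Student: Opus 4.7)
The plan is to combine Theorem \ref{convisddplp} (taken with arbitrarily small error bounds) with the finiteness of the set of distinct cut coefficients, in the spirit of the convergence proof for exact SDDP in \cite{philpot}. First I would observe that, since $\varepsilon_t^k \to 0$ and $\delta_t^k \to 0$, for every $\eta>0$ there exists $K(\eta)$ such that $\varepsilon_t^k \le \eta$ and $\delta_t^k \le \eta$ for all $k \ge K(\eta)$ and all relevant $t$. Viewing ISDDP-LP from iteration $K(\eta)$ onward as a run of the algorithm with errors uniformly bounded by $\eta$, Theorem \ref{convisddplp}(i) delivers, almost surely, for every stage $t$ and every node $n$ of stage $t-1$,
\begin{equation*}
0 \le \varliminf_{k \rightarrow +\infty}\bigl[\mathcal{Q}_t( x_n^k ) - \mathcal{Q}_t^k( x_n^k )\bigr] \le \varlimsup_{k \rightarrow +\infty}\bigl[\mathcal{Q}_t( x_n^k ) - \mathcal{Q}_t^k( x_n^k )\bigr] \le 2\eta (T-t+1).
\end{equation*}
Since $\eta$ is arbitrary, $\lim_{k\to+\infty}\bigl[\mathcal{Q}_t(x_n^k) - \mathcal{Q}_t^k(x_n^k)\bigr] = 0$ almost surely.

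The second ingredient is the finiteness observation stated just after the description of ISDDP-LP: because each $(\lambda_{tj}^k,\mu_{tj}^k)$ is an extreme point of the feasible set of the backward dual \eqref{dualpbtbacklp}, a backward induction on $t$ shows that the collection of distinct cut coefficients $(\theta_t^k,\beta_t^k)_k$ is finite for every $t$. Consequently, there exists an almost-surely finite (random) iteration $K_0$ after which no new distinct cut is produced; equivalently, $\mathcal{Q}_t^k = \mathcal{Q}_t^{K_0}$ for all $k \ge K_0$ and all $t$. Combined with the previous limit, this forces the exact equality $\mathcal{Q}_t(x_n^k) = \mathcal{Q}_t^{K_0}(x_n^k)$ for every $k \ge K_0$ and every trial node $n$ of stage $t-1$ visited from iteration $K_0$ onward.

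Next I would invoke the sampling assumption (A2) together with the interstage independence (A0): since the scenario tree has finitely many nodes and samples are drawn independently at each iteration from the full distribution of $\xi^k$, almost surely every node of the tree is chosen as a trial point infinitely often. A backward induction on $t$ from $T+1$ down to $2$ then propagates the equality $\mathcal{Q}_t^{K_0} = \mathcal{Q}_t$ along every realizable trajectory: once $\mathcal{Q}_{t+1}^{K_0} = \mathcal{Q}_{t+1}$ at all visited nodes of stage $t$, the forward subproblem \eqref{pbforwardpasslp} at stage $t$ has the exact cost-to-go, and, because $\delta_t^k\to 0$, any accumulation point of the $\delta_t^k$-optimal trial solutions is a true optimal solution of the corresponding Bellman subproblem. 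Passing to the first stage, one obtains ${\underline{\mathfrak{Q}}}_1^{k-1}(x_0,\xi_1) \to \mathcal{Q}_1(x_0)$, and in fact for all $k$ large enough $x_1^k$ is genuinely optimal for \eqref{firststodplp} since after $K_0$ the first-stage problem is fixed and its $\delta_1^k$-optimal solutions must eventually agree with the optimal set.

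The main obstacle will be making the backward induction rigorous in the presence of the random set of trial points. One has to argue that on the almost-sure event where all scenarios are sampled infinitely often, the stabilized approximation $\mathcal{Q}_t^{K_0}$ describes $\mathcal{Q}_t$ correctly not only at the finitely many trial points seen so far, but at every trajectory the forward pass can generate once $\mathcal{Q}_{t+1}^{K_0}$ is used. Because the forward-pass trajectories are determined, up to the vanishing $\delta_t^k$-inexactness, by the fixed cut model $\mathcal{Q}^{K_0}_{\bullet}$ and the sampled scenario, this reduces to a standard compactness/limit-of-$\delta$-optimal-solutions argument under assumption (A1-L), which completes the proof of finite-iteration convergence.
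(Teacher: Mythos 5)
Your first step---applying Theorem \ref{convisddplp} on tails where the errors are bounded by $\eta$ and letting $\eta \downarrow 0$ to conclude $\lim_{k}\bigl[\mathcal{Q}_t(x_n^k)-\mathcal{Q}_t^k(x_n^k)\bigr]=0$ almost surely---is sound, and your use of the finiteness of the distinct cut coefficients to freeze $\mathcal{Q}_t^k$ at some iteration $K_0$ is also fine. But the pivotal inference that follows, namely that this ``forces the exact equality $\mathcal{Q}_t(x_n^k)=\mathcal{Q}_t^{K_0}(x_n^k)$ for every $k\ge K_0$'', is a non sequitur: a nonnegative sequence converging to $0$ need not vanish at any finite index. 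To turn the asymptotic statement into an exact one at a finite iteration you need the quantity $\mathcal{Q}_t(x_n^k)-\mathcal{Q}_t^{K_0}(x_n^k)$ to range over a finite set, i.e., you need the trial points $x_n^k$ themselves to take only finitely many values. That discreteness of the trial points is precisely the ingredient you omit, and without it nothing in your argument upgrades the asymptotic convergence you already have into convergence in a \emph{finite} number of iterations, which is the entire content of the theorem. The same issue recurs at the first stage: $\delta_1^k$-optimal solutions with $\delta_1^k\to 0$ converge to the optimal set but need not reach it at any finite iteration unless they are drawn from a finite set (e.g., vertices). Your closing backward induction is, as you yourself note, left as an unresolved ``main obstacle''.

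The paper's proof runs in the opposite direction and is much shorter. It first observes that, under (A0) and (A1-L) and because the dual solutions are extreme points of \eqref{dualpbtbacklp}, the algorithm almost surely generates only finitely many distinct trial points and finitely many distinct functions $\mathcal{Q}_t^k$; hence after some iteration every subproblem solved in either pass is a copy of one solved previously. Since there are then only finitely many distinct LPs in play, each with finitely many basic feasible solutions, the vanishing tolerances $\varepsilon_t^k,\delta_t^k$ eventually fall below the minimal positive suboptimality gap over this finite collection, so from some iteration $k_0$ on every subproblem is solved \emph{exactly} and the method coincides with exact SDDP; the finite-convergence theorem of \cite{philpot} then applies. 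This bypasses your backward-induction obstacle entirely. To salvage your route you would need to (a) establish the finiteness of the set of trial points and (b) replace the induction sketch by the reduction to exact SDDP once exactness of the subproblem solutions is in hand---at which point you will have reconstructed the paper's argument, and the detour through Theorem \ref{convisddplp} becomes unnecessary.
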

\begin{proof}
Due to Assumptions (A0), (A1-L), ISDDP-LP 
generates almost surely a finite number of 
trial points $x_1^k, x_2^k,\ldots, x_T^k$.
Similarly, almost surely only a finite number of different functions $\mathcal{Q}_t^k, t=2,\ldots,T,$ can be generated.
Therefore, after some iteration $k_1$, every optimization subproblem solved in the forward and backward passes is a copy of 
an optimization problem solved previously. It follows that after some iteration $k_0$ all subproblems are solved exactly (optimal solutions
are computed for all subproblems) and functions $\mathcal{Q}_t^k$ do not change any more.
Consequently, from iteration $k_0$ on, we can apply the arguments of the proof of convergence of (exact) SDDP applied to linear programs (see Theorem 5 in \cite{philpot}).
\end{proof}

\begin{remark}\label{remchoicepssto} [Choice of parameters $\delta_t^k$ and $\varepsilon_t^k$]
Recalling our convergence analysis and what motivates inexact variants of SDDP, it makes sense to choose for 
$\delta_t^k$ and $\varepsilon_t^k$ sequences which decrease with $k$ and which, for fixed $k$,
decrease with $t$. A simple rule consists in defining relative errors, as long as a solver
handling such errors is used to solve the problems of the forward and backward passes.
Let the relative error for stage $
t$ and iteration $k$ be ${\tt{Rel}}\_{\tt{Err}}_t^k$. 
We propose to use the relative error
\begin{equation}\label{rel_errors2}
{\tt{Rel}}\_{\tt{Err}}_t^k = \frac{1}{k}\Big[ \overline{\varepsilon} - \left(\frac{\overline{\varepsilon} - \varepsilon_0}{T-2} \right) (t-2) \Big],
\end{equation}
for stage $t \geq 2$ and iteration $k \geq 1$ (in both the forward and backward passes) for some 
small $\varepsilon_0$, $0<\varepsilon_0 < \overline{\varepsilon}$,
and ${\tt{Rel}}\_{\tt{Err}}_1^k=0$, which induces corresponding $\delta_t^k$ and $\varepsilon_t^k$.
The relative error ${\tt{Rel}}\_{\tt{Err}}_1^k$ at the first stage needs to be null to
define a valid lower bound at each iteration, see also Remark \ref{stoppingisddp}.
However, it seems more difficult to define sound absolute errors. One possible sequence of absolute error terms in the backward pass could be
$
\varepsilon_t^k = \max\Big(1, \left|{\underline{\mathfrak{Q}}}_t^{k-1} (x_{t-1}^k ,  \tilde \xi_t^k ) \right|   \Big) {\tt{Rel}}\_{\tt{Err}}_t^k
$
with ${\tt{Rel}}\_{\tt{Err}}_t^k$ still given by \eqref{rel_errors2}.
\end{remark}

\section{Inexact cuts in SDDP applied to a class of nonlinear multistage stochastic programs}\label{sec:isddpnlpger}

In this section we introduce ISDDP-NLP, an inexact variant of SDDP 
which combines the tools developed in Sections \ref{sec:computeinexactcuts} and \ref{sec:boundingmulti} with SDDP.

\subsection{Problem formulation and assumptions}\label{sec:sddp1}

ISDDP-NLP applies to the class of multistage stochastic nonlinear optimization problems introduced in \cite{guiguessiopt2016}
of form
\begin{equation}\label{pbtosolve}
\begin{array}{l} 
\displaystyle{\inf_{x_1,\ldots,x_T}} \; \mathbb{E}_{\xi_2,\ldots,\xi_T}[ \displaystyle{\sum_{t=1}^{T}}\;f_t(x_t(\xi_1,\xi_2,\ldots,\xi_t ), x_{t-1}(\xi_1,\xi_2,\ldots,\xi_{t-1}), \xi_t )]\\
x_t(\xi_1,\xi_2,\ldots,\xi_t) \in X_t( x_{t-1}(\xi_1,\xi_2,\ldots,\xi_{t-1}), \xi_t )\;\mbox{a.s.}, \;x_{t} \;\mathcal{F}_t\mbox{-measurable, }t \leq T,
\end{array}
\end{equation}
where $x_0$ is given,  $(\xi_t)_{t=2}^T$ is a stochastic process, $\mathcal{F}_t$ is the sigma-algebra
$\mathcal{F}_t:=\sigma(\xi_j, j\leq t)$, and where $X_t( x_{t-1} , \xi_t)$ is now given by
$$
X_t( x_{t-1} , \xi_t)= \{x_t \in \mathbb{R}^n : x_t \in \mathcal{X}_t,\;g_t(x_t, x_{t-1}, \xi_t) \leq 0,\;\;\displaystyle A_{t} x_{t} + B_{t} x_{t-1} = b_t\},
$$
with $\xi_t$ containing in particular the random elements in matrices $A_t, B_t$, and vector $b_t$.

For this problem, we can write Dynamic Programming equations: assuming that $\xi_1$ is deterministic,
the first stage problem is 
\begin{equation}\label{firststodp}
\mathcal{Q}_1( x_0 ) = \left\{
\begin{array}{l}
\inf_{x_1 \in \mathbb{R}^n} F_1(x_1, x_0, \xi_1) := f_1(x_1, x_0, \xi_1)  + \mathcal{Q}_2 ( x_1 )\\
x_1 \in X_1( x_{0}, \xi_1 )\\
\end{array}
\right.
\end{equation}
for $x_0$ given and for $t=2,\ldots,T$, $\mathcal{Q}_t( x_{t-1} )= \mathbb{E}_{\xi_t}[ \mathfrak{Q}_t ( x_{t-1},  \xi_{t}  )  ]$ with
\begin{equation}\label{secondstodp} 
\mathfrak{Q}_t ( x_{t-1}, \xi_{t}  ) = 
\left\{ 
\begin{array}{l}
\inf_{x_t \in \mathbb{R}^n}  F_t(x_t, x_{t-1}, \xi_t ) :=  f_t ( x_t , x_{t-1}, \xi_t ) + \mathcal{Q}_{t+1} ( x_t )\\
x_t \in X_t ( x_{t-1}, \xi_t ),
\end{array}
\right.
\end{equation}
with the convention that $\mathcal{Q}_{T+1}$ is null.

We make assumption (A0) on $(\xi_t)$ (see Section \ref{sec:insddpl1}) and will denote by $A_{t j}, B_{t j},$ and $b_{t j}$ the realizations of respectively $A_t, B_t,$ and $b_t$
in $\xi_{t j}$. 

We set $\mathcal{X}_0=\{x_0\}$ and make the following assumptions (A1-NL) on the problem data: there exists $\varepsilon_t>0$ (without loss of generality, we will assume in the sequel that $\varepsilon_t=\varepsilon$) 
such that for $t=1,\ldots,T$,\\
\par (A1-NL)-(a) $\mathcal{X}_t$ is nonempty, convex, and compact.
\par (A1-NL)-(b) For every $j=1,\ldots,M$, the function
$f_t(\cdot, \cdot,\xi_{t j})$ is convex on  $\mathcal{X}_t \small{\times} \mathcal{X}_{t-1}$
and belongs to  $\mathcal{C}^{1}(\mathcal{X}_t \small{\times} \mathcal{X}_{t-1})$, the set of real-valued continuously differentiable functions on $\mathcal{X}_t \small{\times} \mathcal{X}_{t-1}$.
\par (A1-NL)-(c) For every $j=1,\ldots,M$, each component $g_{t i}(\cdot, \cdot, \xi_{t j}), i=1,\ldots,p$, of function $g_t(\cdot, \cdot, \xi_{t j})$ is 
convex on $\mathcal{X}_t \small{\times} \mathcal{X}_{t-1}^{\varepsilon_t}$
and belongs to $\mathcal{C}^{1}( \mathcal{X}_t \small{\times} \mathcal{X}_{t-1} )$
where $\mathcal{X}_{t-1}^{\varepsilon_t}=\mathcal{X}_{t-1}+\varepsilon_t  \{x \in \mathbb{R}^n : \|x\|_2 \leq 1\}$.
\par (A1-NL)-(d) For every $j=1,\ldots,M$, for every
$x_{t-1} \in \mathcal{X}_{t-1}^{\varepsilon_t}$,
the set $X_t(x_{t-1}, \xi_{t j}) \cap \mbox{ri}( \mathcal{X}_t)$ is nonempty.
\par  (A1-NL)-(e) If $t \geq 2$, for every $j=1,\ldots,M$, there exists
${\bar x}_{t j}=({\bar x}_{t j t}, {\bar x}_{t j t-1}  ) \in  \mbox{ri}(\mathcal{X}_t) \small{\times} \mathcal{X}_{t-1}$
such that $g_t(\bar x_{t j t}, \bar x_{t j t-1}, \xi_{t j}) < 0$ and  
$A_{t j} \bar x_{t j t} + B_{t j} \bar x_{t j t-1} = b_{t j}$.\\

Assumptions (A0) and (A1-NL) ensure that functions $\mathcal{Q}_t$ are convex and Lipschitz continuous on $\mathcal{X}_{t-1}$: 
\begin{lemma} \label{convrecfuncQtS} Let Assumptions (A0) and (A1-NL) hold. Then for $t=2,\ldots,T+1$, function $\mathcal{Q}_t$ is convex and Lipschitz continuous on 
$\mathcal{X}_{t-1}$.
\end{lemma}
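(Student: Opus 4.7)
The plan is to argue by backward induction on $t$. The base case $t = T+1$ is immediate since $\mathcal{Q}_{T+1} \equiv 0$. For the inductive step, assume $\mathcal{Q}_{t+1}$ is convex and Lipschitz on $\mathcal{X}_t$. Because $\xi_t$ has finite support under (A0), we have $\mathcal{Q}_t(x_{t-1}) = \sum_{j=1}^M p_{tj}\, \mathfrak{Q}_t(x_{t-1}, \xi_{tj})$, so it suffices to prove that each $\mathfrak{Q}_t(\cdot, \xi_{tj})$ is convex and Lipschitz on $\mathcal{X}_{t-1}$ and then take a convex combination.

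Convexity follows from the standard marginal-function argument: the joint objective $(x_t, x_{t-1}) \mapsto f_t(x_t, x_{t-1}, \xi_{tj}) + \mathcal{Q}_{t+1}(x_t)$ is convex by (A1-NL)-(b) and the inductive hypothesis, and the joint feasible set
$$
\{(x_t, x_{t-1}) : x_t \in \mathcal{X}_t,\; g_t(x_t, x_{t-1}, \xi_{tj}) \leq 0,\; A_{tj} x_t + B_{tj} x_{t-1} = b_{tj}\}
$$
is convex by (A1-NL)-(a)--(c); minimizing out $x_t$ therefore yields a convex function of $x_{t-1}$.

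For Lipschitz continuity I would upgrade the argument by showing that $\mathfrak{Q}_t(\cdot, \xi_{tj})$ is actually finite and bounded on the convex set $\mathcal{X}_{t-1}^{\varepsilon_t}$, which contains an open neighborhood of $\mathcal{X}_{t-1}$. Finiteness on $\mathcal{X}_{t-1}^{\varepsilon_t}$ follows from (A1-NL)-(d), which produces a feasible $x_t$, combined with Weierstrass applied to the continuous objective on the compact feasible set (a closed subset of the compact $\mathcal{X}_t$, where continuity of $\mathcal{Q}_{t+1}$ comes from the inductive hypothesis and continuity of $f_t$ from (A1-NL)-(b)). Boundedness from above and below on $\mathcal{X}_{t-1}^{\varepsilon_t}$ follows from continuity of $f_t$ on the compact set $\mathcal{X}_t \times \mathcal{X}_{t-1}^{\varepsilon_t}$ and boundedness of $\mathcal{Q}_{t+1}$ on the compact set $\mathcal{X}_t$ (Lipschitz, by induction). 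A finite convex function on an open convex set is locally Lipschitz (Rockafellar, \emph{Convex Analysis}, Theorem 10.4); covering the compact set $\mathcal{X}_{t-1}$ by finitely many balls contained in this neighborhood then yields a uniform Lipschitz constant for $\mathfrak{Q}_t(\cdot, \xi_{tj})$ on $\mathcal{X}_{t-1}$, and hence for $\mathcal{Q}_t$, closing the induction.

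The main obstacle is ensuring the value function is finite on a genuine \emph{neighborhood} of $\mathcal{X}_{t-1}$ and not merely on $\mathcal{X}_{t-1}$ itself; this is precisely why assumption (A1-NL)-(d) is formulated on the $\varepsilon_t$-fattening $\mathcal{X}_{t-1}^{\varepsilon_t}$ rather than on $\mathcal{X}_{t-1}$. Without such an enlargement, the convex-analysis route would only give continuity relative to $\mathcal{X}_{t-1}$ and not a Lipschitz constant on the whole of $\mathcal{X}_{t-1}$ (boundary points could fail).
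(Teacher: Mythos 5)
Your proof is correct and follows essentially the same route as the paper, which simply delegates this lemma to Proposition 3.1 of \cite{guiguessiopt2016}: convexity via partial minimization of a jointly convex function over a convex joint feasible set, and Lipschitz continuity via finiteness of the value function on the $\varepsilon_t$-fattening $\mathcal{X}_{t-1}^{\varepsilon_t}$ (guaranteed by (A1-NL)-(d)) combined with the local Lipschitz property of finite convex functions and compactness of $\mathcal{X}_{t-1}$. Your closing remark correctly identifies why the fattening is needed; the only implicit step is that $f_t(\cdot,\cdot,\xi_{tj})$ must be finite and convex on $\mathcal{X}_t\times\mathcal{X}_{t-1}^{\varepsilon_t}$ rather than merely on $\mathcal{X}_t\times\mathcal{X}_{t-1}$ as (A1-NL)-(b) literally states, a reading the paper itself adopts elsewhere (e.g.\ in the proof of Proposition \ref{propboundeddualsto}).
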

\begin{proof} See the proof of Proposition 3.1 in \cite{guiguessiopt2016}.
\end{proof}

Assumption (A1-NL)-(d) is used to bound the cut coefficients (see Proposition \ref{propboundeddualsto}). Differentiability and Assumption (A1-NL)-(e) are  useful to derive inexact cuts.

As for MSLPs from Section \ref{sec:insddpl}, due to Assumption (A0), the $M^{T-1}$ realizations of $(\xi_t)_{t=1}^T$ form a scenario tree of depth $T+1$
and we define parameters $n_0, n_1, \mathcal{N}$, $C(n), x_n, p_n, \xi_n$ 
which have the same meaning as in Section \ref{sec:insddpl}.
Additionally, we denote by {\tt{Nodes}}$(t)$ the set of nodes for stage $t$ and 
for a node $n$ of the tree, we define vector $\xi_{[n]}$, the history of the realizations of process $(\xi_t)$ from the first stage node $n_1$ to node $n$.
More precisely, for a node $n$ of stage $t$, the $i$-th component of $\xi_{[n]}$ is $\xi_{\mathcal{P}^{t-i}(n)}$ for $i=1,\ldots, t$,
 where $\mathcal{P}:\mathcal{N} \rightarrow \mathcal{N}$ is the function 
 associating to a node its parent node (the empty set for the root node).

\subsection{ISDDP-NLP algorithm} \label{sec:isddpalgo}

Similarly to SDDP, to solve \eqref{pbtosolve}, ISDDP-NLP approximates for each $t=2,\ldots,T+1$, function
$\mathcal{Q}_t$ by a polyhedral lower approximation $\mathcal{Q}_t^k$ at iteration $k$.
To describe ISDDP-NLP, it is convenient to introduce for $t=1,\ldots,T$, and $k \geq 0$ functions 
$F_t^k(x_t,x_{t-1},\xi_t )=f_t(x_t,x_{t-1},\xi_t)+\mathcal{Q}_{t+1}^k(x_t )$ and ${\underline{\mathfrak{Q}}}_t^k( x_{t-1} , \xi_t ): \mathcal{X}_{t-1} \small{\times} \Theta_t \rightarrow \mathbb{R}$ 
given by 
$$
{\underline{\mathfrak{Q}}}_t^k( x_{t-1} , \xi_t )= \left\{
\begin{array}{l}
\displaystyle \inf_{x_t} \;F_t^k(x_t, x_{t-1} , \xi_t)\\ 
x_t \in X_t( x_{t-1} , \xi_t).
\end{array}
\right.
$$
We start the first iteration with known lower approximations $\mathcal{Q}_t^0= \mathcal{C}_t^0$ for $\mathcal{Q}_t, t=2,\ldots,T$.
Iteration $k \geq 1$ starts with a forward pass
which computes trial points $x_n^k$
for all nodes $n$ of the scenario tree replacing recourse functions 
$\mathcal{Q}_{t+1}$ by approximations $\mathcal{Q}_{t+1}^{k-1}$ available at the beginning of this iteration:
\par {\textbf{Forward pass:}}
\par {\textbf{For }}$t=1,\ldots,T$,
\par \hspace*{0.8cm}{\textbf{For }}every node $n$ of stage $t-1$,
\par\hspace*{1.6cm}{\textbf{For }}every child node $m$ of node $n$, compute a $\delta_t^k$-optimal solution $x_m^k$ of
\begin{equation} \label{defxtkjnlp}
{\underline{\mathfrak{Q}}}_t^{k-1}( x_n^k , \xi_m ) = \left\{
\begin{array}{l}
\displaystyle \inf_{x_m} \; F_t^{k-1}(x_m , x_n^k, \xi_m):= f_t( x_m , x_n^k , \xi_m ) + \mathcal{Q}_{t+1}^{k-1}( x_m ) \\
x_m \in X_t( x_n^k, \xi_m ),
\end{array}
\right.
\end{equation}
\par \hspace*{2.4cm}where $x_{n_0}^k = x_0$ and $\mathcal{Q}_{T+1}^{k-1} = \mathcal{Q}_{T+1} \equiv 0$.
\par \hspace*{1.6cm}{\textbf{End For}}
\par \hspace*{0.8cm}{\textbf{End For}}
\par {\textbf{End For}}
\par Therefore trial points satisfy 
\begin{equation}\label{epssolforward}
{\underline{\mathfrak{Q}}}_t^{k-1}( x_n^k , \xi_m )   \leq F_t^{k-1}(x_m^k , x_n^k, \xi_m)  \leq {\underline{\mathfrak{Q}}}_t^{k-1}( x_n^k , \xi_m ) + \delta_t^k. 
\end{equation}
The forward pass is followed by a backward pass which selects a set of nodes~$n_t^k$, $t=1,\ldots,T$ 
(with $n_1^k=n_1$, and for $t \geq 2$, $n_t^k$ a node of stage $t$, child of node $n_{t-1}^k$) 
corresponding to a sample $({\tilde \xi}_1^k, {\tilde \xi}_2^k,\ldots, {\tilde \xi}_T^k)$
of $(\xi_1, \xi_2,\ldots, \xi_T)$. For $t=2,\ldots,T$, an inexact cut  
\begin{equation}\label{eqcutctknlp}
\mathcal{C}_t^k( x_{t-1} ) = \theta_t^{k} - \eta_t^k ( \varepsilon_t^k ) +  \langle \beta_t^{k}, x_{t-1}-x_{n_{t-1}^{k}}^{k} \rangle
\end{equation}
is computed for $\mathcal{Q}_t$ at $x_{n_{t-1}^{k}}^{k}$ for some coefficients $\theta_t^{k}, \eta_t^k ( \varepsilon_t^k ), \beta_t^k$ whose computations are detailed below. 
At the end of iteration $k$, we obtain the polyhedral lower approximations $\mathcal{Q}_{t}^{k}$ of $\mathcal{Q}_t,\;t=2,\ldots,T+1$, 
given by
$
\mathcal{Q}_{t}^{k}(x_{t-1})  = \displaystyle \max_{0 \leq \ell \leq k}\;\mathcal{C}_t^{\ell}( x_{t-1} ).
$ 
Cuts are computed backward, starting from $t=T+1$, down to $t=2$. For $t=T+1$, the cut is exact:
$\mathcal{C}_{T+1}^k, \theta_{T+1}^k, \eta_{T+1}^k,$ and $\beta_{T+1}^k$ are null. For stage $t<T+1$,
we compute for every child node $m$ of $n:=n_{t-1}^k$ 
an $\varepsilon_t^k$-optimal solution $x_{m}^{B k}$ of
\begin{equation}\label{primalpbisddpnlp}
{\underline{\mathfrak{Q}}}_t^k(x_n^k , \xi_m ) = \left\{
\begin{array}{l}
\displaystyle \inf_{x_m} \;F_t^k(x_m , x_n^k ,  \xi_m):=f_t( x_m , x_n^k , \xi_m) + \mathcal{Q}_{t+1}^k ( x_m )\\ 
x_m \in X_t(x_n^k , \xi_m   )
\end{array}
\right.
\end{equation}
and an $\varepsilon_t^k$-optimal solution $(\lambda_m^k, \mu_m^k)$ of the dual problem
\begin{equation}\label{dualpbisddpnlp}
\begin{array}{l}
\displaystyle \max_{\lambda, \mu, x_m}  h_{t, x_n^k}^{k m}( \lambda , \mu )\\
\lambda = A_m x_m + B_m x_n^k  - b_m,\;x_m \in \mbox{Aff}( \mathcal{X}_t ), \; \mu \geq 0,
\end{array}
\end{equation}
where $h_{t, x_n^k}^{k m}$ is the dual function with $h_{t, x_n^k}^{k m}(\lambda, \mu)$ given by the optimal value of
{\small{
\begin{equation}\label{defdualisddp}
\left\{ 
\begin{array}{l}
\displaystyle \inf_{x_m} \mathcal{L}_{t m}^k  (x_m,\lambda,\mu) :=  F_t^k ( x_m, x_n^k , \xi_m ) + \langle  \lambda , A_m x_m + B_m x_n^k -b_m \rangle + \langle \mu , g_t ( x_m , x_n^k , \xi_m ) \rangle \\
x_m \in \mathcal{X}_t.
\end{array}
\right.
\end{equation}
}}
We now check that Assumption (A1-NL) implies that the following Slater type constraint qualification holds for problem \eqref{primalpbisddpnlp}
(i.e., for all problems solved in the backward passes):
\begin{equation}\label{slater}
\mbox{there exists }{\tilde x}_m^{B k} \in \mbox{ri}(\mathcal{X}_{t}) \mbox { such that }A_m {\tilde x}_m^{B k} + B_m x_{n}^k = b_m \mbox{ and }g_t({\tilde x}_m^{B k},  x_{n}^k , \xi_m)<0.
\end{equation}
The above constraint qualification is the analogue of \eqref{slaterboundmulti} for problem \eqref{primalpbisddpnlp}.
\begin{lemma}\label{boundcutcoeff} 
Let Assumption (A1-NL) hold. Then for every $k \in \mathbb{N}^*$, \eqref{slater} holds. 
\end{lemma}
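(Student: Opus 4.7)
The plan is to combine Assumptions (A1-NL)(d) and (A1-NL)(e) via a convex combination in the joint $(x_t, x_{t-1})$-space, exploiting the fact that the trial point $x_n^k$ (where $n = n_{t-1}^k$) lies in the $\mathbb{R}^n$-interior of the fattened set $\mathcal{X}_{t-1}^{\varepsilon_t}$. Assumption (A1-NL)(d) alone only yields weak inequality $g_t \leq 0$ at $x_n^k$, while (A1-NL)(e) provides a strictly feasible pair $(\bar x_t, \bar x_{t-1})$ but with the ``wrong'' second coordinate $\bar x_{t-1} \neq x_n^k$ in general. The $\varepsilon_t$-fattening is the mechanism that bridges these two facts without violating the linear equality.

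First, observe that by construction of the forward pass \eqref{defxtkjnlp} the trial point $x_n^k$ is feasible for the previous stage, hence $x_n^k \in \mathcal{X}_{t-1}$. Since $\mathcal{X}_{t-1}^{\varepsilon_t} = \mathcal{X}_{t-1} + \varepsilon_t \{ x \in \mathbb{R}^n : \|x\|_2 \leq 1 \}$, the entire Euclidean ball of radius $\varepsilon_t$ around $x_n^k$ lies in $\mathcal{X}_{t-1}^{\varepsilon_t}$.

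Next apply (A1-NL)(e) for the realization $\xi_m$ to obtain $(\bar x_t, \bar x_{t-1}) \in \mbox{ri}(\mathcal{X}_t) \times \mathcal{X}_{t-1}$ with $A_m \bar x_t + B_m \bar x_{t-1} = b_m$ and $g_t(\bar x_t, \bar x_{t-1}, \xi_m) < 0$. If $\bar x_{t-1} = x_n^k$, then $\tilde x_m^{Bk} := \bar x_t$ works at once. Otherwise, extend the segment from $\bar x_{t-1}$ through $x_n^k$ by a length $\varepsilon_t$ past $x_n^k$ by setting
\[
y := x_n^k + \frac{\varepsilon_t}{\|x_n^k - \bar x_{t-1}\|}(x_n^k - \bar x_{t-1}) \in \mathcal{X}_{t-1}^{\varepsilon_t},
\]
apply (A1-NL)(d) at $y$ to obtain $x_t(y) \in \mbox{ri}(\mathcal{X}_t)$ with $A_m x_t(y) + B_m y = b_m$ and $g_t(x_t(y), y, \xi_m) \leq 0$, and define $\mu := \varepsilon_t/(\varepsilon_t + \|x_n^k - \bar x_{t-1}\|) \in (0,1)$. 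By construction $x_n^k = \mu \bar x_{t-1} + (1-\mu) y$, and I claim that $\tilde x_m^{Bk} := \mu \bar x_t + (1-\mu) x_t(y)$ meets the three requirements of \eqref{slater}: linearity gives $A_m \tilde x_m^{Bk} + B_m x_n^k = \mu b_m + (1-\mu) b_m = b_m$; the line-segment principle gives $\tilde x_m^{Bk} \in \mbox{ri}(\mathcal{X}_t)$ since $\mu \in (0,1)$ and both constituents lie in $\mbox{ri}(\mathcal{X}_t)$; and joint convexity of $g_t$ on $\mathcal{X}_t \times \mathcal{X}_{t-1}^{\varepsilon_t}$ given by (A1-NL)(c) yields
\[
g_t(\tilde x_m^{Bk}, x_n^k, \xi_m) \leq \mu\, g_t(\bar x_t, \bar x_{t-1}, \xi_m) + (1-\mu)\, g_t(x_t(y), y, \xi_m) < 0,
\]
the strict sign following from $\mu > 0$ and $g_t(\bar x_t, \bar x_{t-1}, \xi_m) < 0$.

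The main obstacle is conceptual rather than computational: a naive convex combination in $x_t$ alone (e.g.\ averaging $\bar x_t$ with the point supplied by (d) at $x_n^k$) breaks the linear equality, because $A_m \bar x_t = b_m - B_m \bar x_{t-1}$ rather than $b_m - B_m x_n^k$. The remedy is to combine simultaneously in both coordinates, which forces us to find a second reference point whose first coordinate satisfies the equality at a translated value of $x_{t-1}$; the $\varepsilon_t$-fattening in (A1-NL)(d) is precisely what guarantees that such a reference exists on the far side of $x_n^k$ from $\bar x_{t-1}$.
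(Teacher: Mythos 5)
Your proof is correct and follows essentially the same route as the paper: extend the segment from $\bar x_{t-1}$ through $x_n^k$ by length $\varepsilon_t$ into the fattened set $\mathcal{X}_{t-1}^{\varepsilon_t}$, invoke (A1-NL)-(d) at that extended point, and take the convex combination in the joint $(x_t,x_{t-1})$-space with weight $\mu=\varepsilon_t/(\varepsilon_t+\|x_n^k-\bar x_{t-1}\|)$ (the paper's $\theta_0$ is your $1-\mu$). The verification of the three requirements of \eqref{slater} — linearity of the equality constraint, the line-segment principle for $\mbox{ri}(\mathcal{X}_t)$, and joint convexity of $g_t$ on $\mathcal{X}_t\times\mathcal{X}_{t-1}^{\varepsilon_t}$ giving strict negativity from the strictly negative Slater term — matches the paper's argument.
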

\begin{proof}
Let $j=j(m)$ such that $\xi_{t j}=\xi_{m}$.
If $x_{n}^k = {\bar x}_{t j t-1}$ then recalling (A1-NL)-(e), \eqref{slater} holds with ${\tilde x}_m^{B k}  = {\bar x}_{t j t}$.
Otherwise, we define
$$
x_{n}^{k \varepsilon}  = x_{n}^k + \varepsilon \frac{x_{n}^k - {\bar x}_{t j t-1}}{\|x_{n}^k - {\bar x}_{t j t-1}\|}.
$$
Observe that since $x_{n}^k \in \mathcal{X}_{t-1}$, we have $x_{n}^{k \varepsilon} \in \mathcal{X}_{t-1}^{\varepsilon}$. Setting
$$
X_{t m}=\{(x_t, x_{t-1}) \in \mbox{ri}(\mathcal{X}_{t}) \small{\times} \mathcal{X}_{t-1}^{\varepsilon}: A_m x_t + B_m x_{t-1} = b_m,\;g_t(x_t, x_{t-1},\xi_m) \leq 0   \},
$$
since $x_{n}^{k \varepsilon} \in \mathcal{X}_{t-1}^{\varepsilon}$, using (A1-NL)-(d), there exists
$x_{m}^{k \varepsilon} \in \mbox{ri}(\mathcal{X}_t )$ such that
$(x_{m}^{k \varepsilon}, x_{n}^{k \varepsilon}) \in X_{t m}$.
Now clearly, since $\mathcal{X}_t$ and $\mathcal{X}_{t-1}$ are convex, the set $\mbox{ri}(\mathcal{X}_{t}) \small{\times} \mathcal{X}_{t-1}^{\varepsilon}$
is convex too and using (A1-NL)-(c), we obtain that $X_{t m}$ is convex.
Since $({\bar x}_{t j t}, {\bar x}_{t j t-1}) \in X_{t m}$ (due to Assumption (A1-NL)-(e)) 
and recalling that $(x_{m}^{k \varepsilon}, x_{n}^{k \varepsilon}) \in X_{t m}$, we obtain
that for every $0<\theta<1$, the point 
\begin{equation}\label{defxttheta}
(x_t(\theta), x_{t-1}(\theta))=(1-\theta) ({\bar x}_{t j t}, {\bar x}_{t j t-1}) + \theta (x_{m}^{k \varepsilon}, x_{n}^{k \varepsilon}) \in X_{t m}. 
\end{equation}
For 
\begin{equation}\label{deftheta0}
0<\theta=\theta_0=\frac{1}{1+\frac{\varepsilon}{ \|x_{n}^k - {\bar x}_{t j t-1}\|}}<1,
\end{equation}
we get $x_{t-1}(\theta_0)=x_{n}^k$, $x_t(\theta_0 ) \in \mbox{ri}(\mathcal{X}_{t}), 
A_m x_t(\theta_0 ) + B_m x_{t-1}(\theta_0 ) = A_m x_t(\theta_0 ) + B_m x_{n}^k = b_m$, 
and since $g_{t i}, i=1,\ldots,p$, are convex on $\mathcal{X}_t \small{\times} \mathcal{X}_{t-1}^{\varepsilon}$  (see Assumption (A1-NL)-(c)) and therefore on $X_{t m}$, we get 
$$
\begin{array}{lll}
g_t(x_t(\theta_0), x_{t-1}(\theta_0),\xi_m) & = & g_t(x_t(\theta_0), x_{n}^k, \xi_{t j}) \\  
&\leq & \underbrace{(1-\theta_0)}_{>0} \underbrace{g_t( {\bar x}_{t j t}, {\bar x}_{t j t-1}, \xi_{t j} )}_{<0} + 
\underbrace{\theta_0}_{>0} \underbrace{g_t( x_{m}^{k \varepsilon}, x_{n}^{k \varepsilon }, \xi_{t j} )}_{\leq 0} <0.
\end{array}
$$
Therefore, we have justified that \eqref{slater} holds with ${\tilde x}_m^{B k} = x_t(\theta_0 )$.
\end{proof}
From \eqref{slater}, we deduce that the optimal value ${\underline{\mathfrak{Q}}}_t^k(x_n^k , \xi_m )$ of primal problem \eqref{primalpbisddpnlp}
is the optimal value of dual problem \eqref{dualpbisddpnlp} and therefore $\varepsilon_t^k$-optimal dual solution
$(\lambda_m^k , \mu_m^k)$ satisfies:
\begin{equation}\label{defepssoldual}
{\underline{\mathfrak{Q}}}_t^k(x_n^k , \xi_m ) - \varepsilon_t^k  \leq  h_{t, x_n^k}^{k m}( \lambda_m^k  , \mu_m^k ) \leq {\underline{\mathfrak{Q}}}_t^k(x_n^k , \xi_m ). 
\end{equation}
We now use the results of Section \ref{sec:computeinexactcutsnlp} to derive an inexact cut $\mathcal{C}_t^k$ for $\mathcal{Q}_t$ at $x_n^k$ (recall that $n=n_{t-1}^k$).
Problem \eqref{primalpbisddpnlp} can be rewritten as
\begin{equation}\label{backwardpbrewritten}
\left\{
\begin{array}{l}
\displaystyle \inf_{x_m, y_m} \;f_t( x_m , x_n^k , \xi_m) + y_m \\ 
x_m \in X_t(x_n^k , \xi_m  ), y_m \geq \theta_{t+1}^j -  \eta_{t+1}^j (  \varepsilon_{t+1}^j ) + \langle \beta_{t+1}^{j}, x_m -  x_{n_t^j}^j \rangle, j=1,\ldots,k,
\end{array}
\right.
\end{equation}
which is of form \eqref{vfunctionq} with 
$
y=[x_m;y_m], x=x_{n}^k, f(y,x)=f_t(x_m,x_n^k,\xi_m)+y_m, 
A=[A_m \;0_{q \small{\times} 1} ], B=B_m, b=b_m, g(y,x)=g_t(x_m,x_n^k,\xi_m),
Y=\{y=[x_m ; y_m ] :x_m \in \mathcal{X}_t, B_{t+1}^k y \leq b_{t+1}^k \},
$
where the $j$-th line of matrix $B_{t+1}^k$ is $[(\beta_{t+1}^j)^T, -1]$
and where the $j$-th component of $b_{t+1}^k$ is $-\theta_{t+1}^j +  \eta_{t+1}^j (  \varepsilon_{t+1}^j ) + \langle \beta_{t+1}^{j} x_{n_t^j}^j \rangle$.

Therefore denoting by $(x_m^{B k}, y_m^{B k})$ an optimal solution of optimization problem \eqref{backwardpbrewritten}, by 
$\ell_{t}^{k m}(x_m^{B k}, x_n^k$, $\lambda_m^k$, $\mu_m^k$,$\xi_m)$ the optimal value of the optimization problem\footnote{Observe that this is a linear program if $\mathcal{X}_t$ is polyhedral.}
\begin{equation}\label{l2isddpnlp}
{\footnotesize{
\begin{array}{l}
\begin{array}{l}
\displaystyle \max \; \displaystyle  \langle \nabla_{x_t} f_t ( x_m^{B k}, x_n^k, \xi_m )  +  A_m^T \lambda_m^k + \sum_{i=1}^p \mu_m^{k}(i) \nabla_{x_t} g_{t i}(x_m^{B k}, x_n^k, \xi_m ) , x_m^{B k} - x_m  \rangle 
+ y_m^{B k} - y_m,\\
x_m \in \mathcal{X}_t, B_{t+1}^k [x_m;y_m] \leq b_{t+1}^k,
\end{array}
\end{array}
}}
\end{equation}
and introducing  coefficients
{\small{
\begin{equation}\label{defcoeffsisddp}
\begin{array}{lcl}
\theta_{t}^{k m}& =& \mathcal{L}_{t m}^k  (x_m^{B k},\lambda_m^k,\mu_m^k) = f_t ( x_m^{B k}, x_n^k, \xi_m ) +  \mathcal{Q}_{t+1}^k ( x_m^{B k} )  + \langle \mu_m^k ,  g_{t} ( x_m^{B k} , x_n^k , \xi_m ) \rangle,\\
\eta_{t}^{k m}(\varepsilon_t^k )& =& \ell_{t}^{k m}(x_m^{B k}, x_n^k , \lambda_m^k , \mu_m^k, \xi_m ) ,\\
\beta_t^{k m}&=&\nabla_{x_{t-1}} f_t ( x_m^{B k}, x_n^k, \xi_m )+ B_m^T \lambda_m^k +  \sum_{i=1}^p \mu_m^k(i) \nabla_{x_{t-1}} g_{t i} ( x_m^{B k} , x_n^k , \xi_m ),
\end{array}
\end{equation}
}}then using Proposition \ref{varprop1} we obtain that $\theta_{t}^{k m}-\eta_{t}^{k m}(\varepsilon_t^k ) + \langle \beta_t^{k m} , \cdot - x_n^k \rangle $
is an inexact cut for ${\underline{\mathfrak{Q}}}_t^k(\cdot , \xi_m )$ at $x_n^k$.\footnote{Note that the assumptions
of Proposition \ref{varprop1} are satisfied. In particular, 
$f_t(\cdot,x_n^k,\xi_m) + \mathcal{Q}_{t+1}^k(\cdot)$ is bounded from below on the feasible set  of \eqref{primalpbisddpnlp}  and the optimal value of $y_m$ in \eqref{backwardpbrewritten} and \eqref{l2isddpnlp} is finite.
In fact, problems \eqref{backwardpbrewritten} and \eqref{l2isddpnlp} can be equivalently rewritten as an optimization problem over a compact set adding the
constraints  $\min_{x_t \in \mathcal{X}_t} \mathcal{Q}_{t+1}^1(x_t) \leq   y_m \leq \max_{x_t \in \mathcal{X}_t} \mathcal{Q}_{t+1}( x_t)$ on $y_m$ and with such reformulation Proposition \ref{varprop2} applies too.}
It follows that setting 
\begin{equation}\label{formulathetaknlp}
\theta_t^k=\sum_{m \in C(n)} p_m \theta_{t}^{k m},\;\;\eta_t^{k}(\varepsilon_t^k ) = \sum_{m \in C(n)} p_m \eta_{t}^{k m}(\varepsilon_t^k ) ,\;\;\beta_t^k=\sum_{m \in C(n)} p_m  \beta^{k m},
\end{equation}
the 
affine function $\mathcal{C}_t^k(\cdot)=\theta_t^k-\eta_t^{k}(\varepsilon_t^k )+\langle \beta_t^k ,\cdot - x_n^k \rangle $
is an inexact cut for $\mathbb{E}_{\xi_t}[{\underline{\mathfrak{Q}}}_t^k(\cdot , \xi_t )]$ and therefore for $\mathcal{Q}_t$.

The computation of coefficients \eqref{formulathetaknlp} ends the backward pass and iteration $k$.

\begin{remark}\label{stoppingisddp} Since $\mathcal{Q}_t^k$ is a lower bound on $\mathcal{Q}_t$,
a stopping criterion similar to the one used with SDDP can be used. 
For that, we need to compute a valid lower bound in the forward passes solving
exactly the first stage problems in the forward passes taking $\delta_1^k=0$.
\end{remark}

\begin{remark}
We assumed that for ISDDP-NLP 
nonlinear optimization problems  are solved approximately
whereas linear optimization problems are solved exactly. 
Since in ISDDP-NLP we compute the optimal value
$\ell_{t}^{k m}(x_m^{B k}, x_n^k , \lambda_m^k , \mu_m^k, \xi_m )$
of optimization problem \eqref{l2isddpnlp}, it is assumed that these problems are linear. Since these optimization problems have a linear objective function, they are linear programs if and only if 
$\mathcal{X}_t$ is polyhedral. If this is not the case then
(a) either we add components  to $g$ 
pushing
the nonlinear constraints in the representation of $\mathcal{X}_t$ in $g$ or
(b) we also solve \eqref{l2isddpnlp} approximately.  
In Case (b), we can still build an inexact cut $\mathcal{C}_t^k$ (see Remark \ref{remextincutvar}) and study the 
convergence of the corresponding variant of ISDDP-NLP along the lines of Section \ref{sec:convsddp}.
\end{remark}

\subsection{Convergence analysis}\label{sec:convsddp}

In Proposition \ref{propboundeddualsto}, we show that the cut coefficients and approximate dual solutions computed in the backward passes are almost surely bounded
with the following additional assumption:\\

\par (SL-NL) For $t=2, \ldots, T$, there exists $\kappa_t>0, r_t>0$ such that for every $x_{t-1} \in \mathcal{X}_{t-1}$,
for every $j=1,\ldots,M$,
there exists $x_t \in \mathcal{X}_t$ such that $\mathbb{B}(x_t, r_t) \cap \mbox{Aff}( \mathcal{X}_t ) \subseteq  \mathcal{X}_t$,
$A_{t j} x_t + B_{t j} x_{t-1}=b_{t j}$, and for every $i=1,\ldots,p$, $g_{t i}( x_t, x_{t-1}, \xi_{t j}) \leq -\kappa_t$. \\
\begin{proposition}\label{propboundeddualsto} Assume that errors $(\varepsilon_t^k)_{k \geq 1}$ are bounded: for 
$t=1,\ldots,T$, we have $0 \leq \varepsilon_t^k \leq {\bar \varepsilon}_t< +\infty$.
If Assumptions (A0), (A1-NL), and (SL-NL) hold then the sequences 
$(\theta_{t}^k)_{t ,k}$, $(\eta_{t}^k (  \varepsilon_{t}^k ))_{t, k}$, $( \beta_t^k )_{t, k}$, 
$(\lambda_{m}^k )_{m, k}$, $( \mu_{m}^k )_{m, k}$
 generated by the ISDDP-NLP algorithm are almost surely bounded: for $t=2,\ldots,T+1$, there exists
 a compact set $C_t$ such that the sequence  
 $(\theta_{t}^k , \eta_{t}^k (  \varepsilon_{t}^k ), \beta_t^k )_{k \geq 1}$ almost surely belongs to 
 $C_t$ and for every $t=1,\ldots,T-1$, for every node $n$ of stage $t$, for every $m \in C(n)$,
 there exists
 a compact set $\mathcal{D}_m$ such that the sequence  
 $( \lambda_{m}^k , \mu_m^k )_{k : n_t^k = n}$ almost surely belongs to 
 $\mathcal{D}_m$.
\end{proposition}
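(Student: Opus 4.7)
The plan is a backward induction on $t$ running from $T+1$ down to $2$. The base case $t=T+1$ is trivial since, by convention, $\theta_{T+1}^k$, $\eta_{T+1}^k(\varepsilon_{T+1}^k)$ and $\beta_{T+1}^k$ are all zero. For the inductive step, I assume that $(\theta_{t+1}^k, \eta_{t+1}^k(\varepsilon_{t+1}^k), \beta_{t+1}^k)_{k\geq 1}$ almost surely belongs to some compact set $C_{t+1}$, and aim to show the analogous statement for stage $t$ by first bounding the approximate dual solutions $(\lambda_m^k, \mu_m^k)$ via Proposition \ref{propboundnormepsdualsol}, and then reading off the boundedness of the cut coefficients from the explicit formulas \eqref{defcoeffsisddp}--\eqref{formulathetaknlp}.

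The crux is to verify, uniformly in $k$ and in $x_n^k \in \mathcal{X}_{t-1}$ (with $n=n_{t-1}^k$), the hypotheses (i)--(iv) and SL of Section \ref{sec:boundingmulti} for the primal problem \eqref{primalpbisddpnlp} whose dual is \eqref{dualpbisddpnlp}. Assumption (SL-NL) provides directly the Slater point, with quantitative parameters $\kappa_t$ and $r_t$ that depend neither on $k$ nor on $x_n^k$. Assumption (A1-NL) guarantees that $f_t(\cdot, x_n^k, \xi_m)$ and each $g_{t i}(\cdot, x_n^k, \xi_m)$ are $C^1$ and thus Lipschitz on the compact set $\mathcal{X}_t$, with a Lipschitz constant controlled by the supremum of the relevant gradients over $\mathcal{X}_t \small{\times} \mathcal{X}_{t-1}$, hence uniform in $k$. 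The polyhedral function $\mathcal{Q}_{t+1}^k$ is a maximum of affine pieces with coefficients in the compact set $C_{t+1}$ from the induction hypothesis, so its restriction to $\mathcal{X}_t$ is Lipschitz with a constant uniform in $k$, and it is bounded on $\mathcal{X}_t$. Consequently $F_t^k(\cdot, x_n^k, \xi_m)$ is Lipschitz and bounded below uniformly in $k$, so a uniform lower bound $\mathcal{L}$ exists. Proposition \ref{propboundnormepsdualsol} then yields a bound on $\|(\lambda_m^k, \mu_m^k)\|$ depending only on the fixed parameters $\kappa_t, r_t, \rho_*$, uniform Lipschitz constants, $\mathcal{L}$, ${\bar \varepsilon}_t$ and $f_t(y_0,\cdot)$ on a compact set; this produces the required compact set $\mathcal{D}_m$.

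With $(\lambda_m^k,\mu_m^k) \in \mathcal{D}_m$ established, I turn to the cut coefficients via \eqref{defcoeffsisddp}. The term $\theta_t^{km}$ combines $f_t$, $\mathcal{Q}_{t+1}^k$ and $\langle \mu_m^k, g_t\rangle$ evaluated on compact sets, all uniformly bounded in $k$. The term $\beta_t^{km}$ is a linear combination of $\nabla_{x_{t-1}} f_t$, $B_m^T\lambda_m^k$ and $\sum_i \mu_m^k(i)\nabla_{x_{t-1}} g_{t i}$, each factor being uniformly bounded. For $\eta_t^{km}(\varepsilon_t^k)$, which is the optimal value of \eqref{l2isddpnlp}, the linear objective has coefficients bounded by uniform bounds on $\nabla_{x_t}f_t$, $A_m^T\lambda_m^k$, $\mu_m^k(i)\nabla_{x_t}g_{t i}$ and $\beta_{t+1}^j$ (via the induction hypothesis), while the feasible set has uniformly bounded diameter because $\mathcal{X}_t$ is compact and $y_m$ may be sandwiched between $\min_{x_t\in \mathcal{X}_t}\mathcal{Q}_{t+1}^1(x_t)$ and $\max_{x_t\in \mathcal{X}_t}\mathcal{Q}_{t+1}(x_t)$ without changing the optimum. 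Averaging with the probabilities $p_m$ as in \eqref{formulathetaknlp} preserves the bounds, so a compact set $C_t$ exists.

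The main obstacle is the coupling between the two induction quantities: bounding the stage-$t$ dual multipliers requires a uniform Lipschitz constant and lower bound for $\mathcal{Q}_{t+1}^k$, which in turn requires uniformly bounded stage-$(t{+}1)$ cut coefficients; this is exactly why the induction must proceed from the leaves of the tree downward. A secondary technical point is the uniform boundedness of the epigraphical variable $y_m$ in the equivalent formulation \eqref{backwardpbrewritten}, which is handled by the sandwich argument above and guarantees that the auxiliary linear program \eqref{l2isddpnlp} has uniformly bounded optimal value. Once both points are addressed, the compact sets $C_t$ and $\mathcal{D}_m$ are obtained with bounds that are deterministic functions of the fixed problem data and the error bounds ${\bar\varepsilon}_t$, so the ``almost surely'' qualifier is inherited solely from the random sampling of nodes $n_t^k$.
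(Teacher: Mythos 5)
Your proposal is correct and follows essentially the same route as the paper: a backward induction on $t$ with base case $T+1$, using the induction hypothesis to get a uniform Lipschitz constant and lower bound for $\mathcal{Q}_{t+1}^k$, invoking Proposition \ref{propboundnormepsdualsol} (the paper actually uses its Corollary \ref{corpropboundnormepsdualsol} with the upper bounding function $f_t(\cdot,x_n^k,\xi_m)+\mathcal{Q}_{t+1}(\cdot)$, a cosmetic difference) together with (SL-NL) to bound the dual multipliers uniformly, and then reading off bounds on $\theta_t^{km}$, $\beta_t^{km}$ and $\eta_t^{km}$ from \eqref{defcoeffsisddp}, including the sandwich bound on the epigraphical variable $y_m$ that the paper relegates to a footnote. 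No gaps.
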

\begin{proof} The proof is by backward induction on $t$. Our induction hypothesis 
$\mathcal{H}(t)$
for 
$t \in \{2,\ldots,T+1\}$ is that the sequence $(\theta_{t}^k , \eta_{t}^k (  \varepsilon_{t}^k ), \beta_t^k )_{k \geq 1}$ belongs to 
a compact set $C_t$. $\mathcal{H}(T+1)$ holds because for $t=T+1$ the corresponding coefficients are null.
Now assume that $\mathcal{H}(t+1)$ holds for some $t \in \{2,\ldots,T\}$ and take an arbitrary $n \in {\tt{Nodes}}(t-1)$ and $m \in C(n)$. We want to show that $\mathcal{H}(t)$
holds and 
that the sequence  $( \lambda_{m}^k , \mu_m^k )_{k: n_{t-1}^k =n}$ belongs to 
some compact set $\mathcal{D}_m$. Since $f_t(\cdot,\cdot,\xi_m), g_t(\cdot,\cdot,\xi_m) \in \mathcal{C}^1( \mathcal{X}_t \small{\times} \mathcal{X}_{t-1} )$
we can find finite $m_t, M_{t 1}, M_{t 2}, M_{t 3}, M_{t 4}$ such that for every $x_t \in \mathcal{X}_t, x_{t-1} \in \mathcal{X}_{t-1}$, 
for every $i=1,\ldots,p$, for every $m \in C(n)$,  we have
$\| \nabla_{x_t, x_{t-1}} f_t ( x_t , x_{t-1}, \xi_m)  \| \leq M_{t 2}$, 
$\| \nabla_{x_t, x_{t-1}} g_{t i} ( x_t , x_{t-1},\xi_m ) \| \leq M_{t 3}$,
$m_t \leq f_t( x_t , x_{t-1},\xi_m) \leq M_{t 1}$,
and  $\| g_t ( x_t , x_{t-1},\xi_m) \| \leq M_{t 4}$.
Also since $\mathcal{H}(t+1)$ holds, the sequence $(\|\beta_{t+1}^k\|)_{k \geq 1}$ is bounded
from above by, say, $L_{t+1}$, which is a Lipschitz constant for all
functions $(\mathcal{Q}_{t+1}^k )_{k \geq 1}$.
We now    
derive a bound on $\|(\lambda_m^k , \mu_m^k )\|$ using Proposition \ref{propboundnormepsdualsol}  
and Corollary \ref{corpropboundnormepsdualsol}. 
We will denote by $L(\mathcal{Q}_{t+1})$ a Lipschitz constant of $\mathcal{Q}_{t+1}$ on $\mathcal{X}_t$
(see Lemma \ref{convrecfuncQtS}). Let us check that the assumptions of this corollary
are satisfied for problem \eqref{primalpbisddpnlp}: 
\begin{itemize}
\item[(i)] $\mathcal{X}_t$ is a closed convex set;
\item[(ii)] $F_t^k(\cdot, x_{n}^k , \xi_m)$ is bounded from above by
${\bar f}_m(\cdot)=f_t(\cdot, x_{n}^k ,\xi_m) + \mathcal{Q}_{t+1}( \cdot )$. Since 
$f_t(\cdot,\cdot,\xi_m)$ is convex and finite in a neighborhood of $\mathcal{X}_t \small{\times} \mathcal{X}_{t-1}$,
it is Lipschitz continuous on $\mathcal{X}_t \small{\times} \mathcal{X}_{t-1}$ with Lipschitz constant,
say, $L_m(f_t)$. Therefore ${\bar f}_m$ is Lipschitz continuous with Lipschitz constant
$L_m(f_t)+L(\mathcal{Q}_{t+1})$ on $\mathcal{X}_t$.
\item[(iii)] Since all components of $g_t(\cdot,\cdot,\xi_m)$ are convex and finite in a neighborhood of $\mathcal{X}_t \small{\times} \mathcal{X}_{t-1}$,
they are  Lipschitz continuous on $\mathcal{X}_t \small{\times} \mathcal{X}_{t-1}$.
\item[(iv)]  $\mathcal{L}_m= \displaystyle \min_{x_{t-1} \in \mathcal{X}_{t-1}} {\underline{\mathfrak{Q}}}_t^1(x_{t-1} , \xi_m )$ 
is a (finite) lower bound for the objective function on the feasible set
(the minimum is well defined due to (A1-NL) and $\mathcal{H}(t)$).
\end{itemize}
Due to Assumption (SL-NL) we can find $\hat x_m^k$ such that 
$\mathbb{B}_n( \hat x_m^k , r_t ) \cap \mbox{Aff}( \mathcal{X}_t ) \subseteq \mathcal{X}_t$ and
$\hat x_m^k \in X_t( x_{n}^k , \xi_m)$.
Therefore, reproducing the reasoning of Section \ref{sec:boundingmulti}, we can find
$\rho_m>0$ such that 
$
\mathbb{B}_q( 0, \rho_m ) \cap A_m V_{\mathcal{X}_t}  \subseteq A_m \Big( \mathbb{B}_n( 0 , r_t )  \cap  V_{\mathcal{X}_t} \Big)
$
where $V_{\mathcal{X}_t}$ is the vector space $V_{\mathcal{X}_t}=\{x-y,\;x,y \in \mbox{Aff}( \mathcal{X}_t )\}$
(this is relation \eqref{deductionfromSLrewr} for problem \eqref{primalpbisddpnlp}).
Applying Corollary \ref{corpropboundnormepsdualsol} to problem \eqref{primalpbisddpnlp} we deduce that
$\|(\lambda_m^k , \mu_m^k )\| \leq U_t:= \max_{m \in C(n)} U_{t m}$ 
where\footnote{Observe that $U_{t m}$ does not depend on $k$. In particular, 
the only relation radius $\rho_m$ (involved in the formula giving $U_{t m}$)
has to satisfy is
$
\mathbb{B}_q( 0, \rho_m ) \cap A_m V_{\mathcal{X}_t}  \subseteq A_m \Big( \mathbb{B}_n( 0 , r_t )  \cap  V_{\mathcal{X}_t} \Big)
$
and this relation does not depend on $k$.}
{\small{
$$
U_{t m}
= \frac{
(  L_m(f_t)   + L( \mathcal{Q}_{t+1} )  ) r_t + {\bar{\varepsilon}}_t +
\displaystyle  \max_{x_t \in \mathcal{X}_t, x_{t-1} \in \mathcal{X}_{t-1}}(f_t( x_t ,x_{t-1},\xi_m ) + \mathcal{Q}_{t+1}(x_t)) - \mathcal{L}_m         }  {\min(\rho_m, \frac{\kappa_t}{2})}.
$$
}}
Now let $n=n_{t-1}^k$. For 
$\theta_t^k = \sum_{m \in C(n)} p_m \theta_t^{k m}$,
we get the bound 
$
m_t -U_t M_{t 4} +  \min_{x_t \in \mathcal{X}_t} \mathcal{Q}_{t+1}^1 ( x_t ) \leq \theta_t^k \leq M_{t 1}+   \max_{x_t \in \mathcal{X}_t} \mathcal{Q}_{t+1} ( x_t ).
$
Note that $\eta_{t}^k (  \varepsilon_{t}^k ) \geq 0$ and 
the objective function of problem \eqref{l2isddpnlp}  with optimal value 
$\eta_{t}^{k m} (  \varepsilon_{t}^k )$ is bounded from above on the feasible set by
$
{\bar \eta}_t = \Big( M_{t 2} + \sqrt{2} \max( \max_{m \in C(n)} \|A_m^T\| , M_{t 3} \sqrt{p} ) U_t   + L( \mathcal{Q}_{t+1} ) \Big) D( \mathcal{X}_t  )
$
and therefore the same upper bound holds for $\eta_{t}^k (  \varepsilon_{t}^k )$.
Finally, recalling definition \eqref{formulathetaknlp} of $\beta_t^k$ we have
$
\| \beta_t^k \| \leq
L_t := M_{t 2} + \sqrt{2} \max(\displaystyle \max_{m \in C(n)} \|B_m^T\| , M_{t 3} \sqrt{p} ) U_t, 
$
which completes the proof and provides a Lipschitz constant
 $L_t$ valid for functions $(\mathcal{Q}_t^k)_k$.
\end{proof}

We will assume that the sampling procedure in ISDDP-NLP satisfies (A2) (see Section \ref{conv-analsddp}).

To show that the sequence of error terms $(\eta_t^k (  \varepsilon_t^k ))_k$
converges to 0 when $\displaystyle \lim_{k \rightarrow +\infty} \varepsilon_t^k $~$= 0$,
we will make use of Proposition \ref{propvanish1dual} which follows:
\begin{proposition}\label{propvanish1dual} Let $Y \subset \mathbb{R}^n, X \subset \mathbb{R}^m$,  be two nonempty compact convex sets.
Let $f \in \mathcal{C}^1 (  Y\small{\times}X )$ be convex on $Y\small{\times}X$.
Let $(\mathcal{Q}^k)_{k \geq 1}$ be a sequence of convex $L$-Lipschitz continuous functions
on $Y$ satisfying $\underline{\mathcal{Q}}  \leq \mathcal{Q}^k \leq {\bar{\mathcal{Q}}}$ on $Y$
where $\underline{\mathcal{Q}}, {\bar{\mathcal{Q}}}$ are continuous on $Y$.
Let $g \in \mathcal{C}^1( Y \small{\times} X  )$ with components 
$g_i, i=1,\ldots,p$, 
convex on $Y\small{\times}X^{\varepsilon}$ for some $\varepsilon>0$. We also assume 
{\small{
$$
(H):\;\exists r, \kappa >0\;:\;\forall x \in X \;\exists y \in Y :\,\mathbb{B}_n( y, r ) \cap \emph{Aff}( Y) \subseteq Y,\;A y + B x = b,\;g(y, x) \leq  -\kappa {\textbf{e}},
$$
}}where ${\textbf{e}}$ is a vector of ones of size $p$.
Let $(x^k)_{k \geq 1}$ be a sequence in $X$, let $(\varepsilon^k)_{k \geq 1}$ be a sequence of nonnegative real numbers, 
and let $y^k ( \varepsilon^k )$ be an $\varepsilon^k$-optimal and feasible solution to 
\begin{equation}\label{defpblemmavanish4}
\inf \;\{f(y, x^k ) + \mathcal{Q}^k ( y) \;\;: \;\; y \in Y, \;Ay + B x^k = b, \;g(y, x^k) \leq 0 \} .
\end{equation}
Let $(\lambda^k(  \varepsilon^k ), \mu^k ( \varepsilon^k ) )$ be an $\varepsilon^k$-optimal solution to the dual
problem 
\begin{equation}\label{dualpropconv0etak}
\begin{array}{l}
\sup_{\lambda, \mu} \;h_{x^k}^k ( \lambda, \mu ) \\
\lambda = A y + B x^k -b ,\;y \in \emph{Aff}(Y),\;\mu \geq 0,
\end{array}
\end{equation}
where 
$
h_{x^k}^k ( \lambda, \mu )=\displaystyle \inf_{y \in Y} \{f(y, x^k ) + \mathcal{Q}^k ( y) + \langle \lambda , A y + B x^k -b \rangle +  \langle \mu , g(y, x^k ) \rangle \}.
$
Define $\eta^k (  \varepsilon^k )$ as the optimal value of the following optimization problem:
{\small{
\begin{equation}\label{defpblemmavanish5}
\begin{array}{ll}
\displaystyle \max_{y \in Y} & \left \langle \nabla_y f(y^k ( \varepsilon^k ), x^k ) +A^T \lambda^k (  \varepsilon^k  )  +\displaystyle \sum_{i=1}^p \mu^k(\varepsilon^k )( i ) \nabla_{y} g_i( y^k ( \varepsilon^k )  , x^k ) , y^k ( \varepsilon^k )  - y \right \rangle  \\
&+ \mathcal{Q}^k ( y^k ( \varepsilon^k  ) ) - \mathcal{Q}^k ( y ).
\end{array}
\end{equation}
}}
Then if $\lim_{k \rightarrow +\infty} \varepsilon^k = 0$ we have
\begin{equation}\label{vanishnoises5}
\lim_{k \rightarrow +\infty} \eta^k (  \varepsilon^k ) = 0.
\end{equation}
\end{proposition}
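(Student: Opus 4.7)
The plan is to recognize that the objective $\phi^k(y)$ maximized in \eqref{defpblemmavanish5} is essentially the linearization error of the ``smooth part'' of the Lagrangian of problem \eqref{defpblemmavanish4} plus a $\mathcal{Q}^k$-difference, and then to exploit convexity to bound $\eta^k(\varepsilon^k)$ by a primal-dual gap that vanishes under $\varepsilon^k$-optimality.

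More precisely, I would introduce
\[
\widetilde{L}^k(y) := f(y,x^k) + \langle \lambda^k(\varepsilon^k), Ay+Bx^k-b\rangle + \langle \mu^k(\varepsilon^k), g(y,x^k)\rangle,
\]
so that $L^k(y):=\widetilde{L}^k(y)+\mathcal{Q}^k(y)$ is the Lagrangian of \eqref{defpblemmavanish4} and $h_{x^k}^k(\lambda^k(\varepsilon^k),\mu^k(\varepsilon^k)) = \inf_{y \in Y} L^k(y)$. Convexity of $f(\cdot,x^k)$, of each $g_i(\cdot,x^k)$ (from the hypothesis that the $g_i$ are convex on $Y \times X^\varepsilon$), together with $\mu^k(\varepsilon^k)\geq 0$, implies that $\widetilde{L}^k$ is convex and of class $\mathcal{C}^1$ on $Y$, with gradient precisely the vector appearing in \eqref{defpblemmavanish5}. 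The gradient inequality then gives $\langle \nabla \widetilde{L}^k(y^k(\varepsilon^k)),\,y^k(\varepsilon^k)-y\rangle \leq \widetilde{L}^k(y^k(\varepsilon^k))-\widetilde{L}^k(y)$, and the objective of \eqref{defpblemmavanish5} satisfies $\phi^k(y) \leq L^k(y^k(\varepsilon^k)) - L^k(y)$ for every $y \in Y$. Taking the supremum over $y \in Y$, and noting that $y^k(\varepsilon^k) \in Y$ gives the trivial lower bound $\eta^k(\varepsilon^k) \geq \phi^k(y^k(\varepsilon^k)) = 0$, I would obtain
\[
0 \leq \eta^k(\varepsilon^k) \leq L^k(y^k(\varepsilon^k)) - h_{x^k}^k(\lambda^k(\varepsilon^k),\mu^k(\varepsilon^k)).
\]

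Next I would close the argument using primal feasibility and strong duality. Since $y^k(\varepsilon^k)$ is feasible for \eqref{defpblemmavanish4}, namely $Ay^k(\varepsilon^k)+Bx^k=b$ and $g(y^k(\varepsilon^k),x^k)\leq 0$, and $\mu^k(\varepsilon^k)\geq 0$, I get $L^k(y^k(\varepsilon^k)) \leq f(y^k(\varepsilon^k),x^k)+\mathcal{Q}^k(y^k(\varepsilon^k))$. Denoting by $p_k^*$ the optimal value of \eqref{defpblemmavanish4}, assumption (H) is a uniform Slater condition, so it ensures strong duality for each $k$, and the reformulation argument developed around \eqref{dualreform} ensures that the restricted dual \eqref{dualpropconv0etak} also has value $p_k^*$. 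The $\varepsilon^k$-optimality of $y^k(\varepsilon^k)$ and of $(\lambda^k(\varepsilon^k),\mu^k(\varepsilon^k))$ then yields $f(y^k(\varepsilon^k),x^k)+\mathcal{Q}^k(y^k(\varepsilon^k)) \leq p_k^*+\varepsilon^k$ and $h_{x^k}^k(\lambda^k(\varepsilon^k),\mu^k(\varepsilon^k)) \geq p_k^* - \varepsilon^k$, which combine to give $\eta^k(\varepsilon^k) \leq 2\varepsilon^k \to 0$.

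The main subtlety I foresee is verifying that strong duality holds uniformly in $k$: the primal problem \eqref{defpblemmavanish4} changes with $k$ through $\mathcal{Q}^k$, but (H) provides a Slater point depending only on $x^k$ (and not on $\mathcal{Q}^k$), and $f(\cdot,x^k)+\mathcal{Q}^k(\cdot)$ is finite on the compact set $Y$ by the assumed Lipschitz continuity of $\mathcal{Q}^k$. Classical convex duality therefore applies at each $k$ and delivers $p_k^* = d_k^*$. The uniform bounds $\underline{\mathcal{Q}} \leq \mathcal{Q}^k \leq \bar{\mathcal{Q}}$ and the compactness of $X,Y$ do not enter the bound $\eta^k \leq 2\varepsilon^k$ directly, although they are what would be required to separately bound $\|\lambda^k(\varepsilon^k)\|$ and $\|\mu^k(\varepsilon^k)\|$ via Corollary \ref{corpropboundnormepsdualsol} if a quantitative dependence on the problem data were sought.
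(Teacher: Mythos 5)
There is a genuine gap, and it sits at the very first inequality of your argument: the gradient inequality for the convex function $\widetilde{L}^k$ goes the other way. Convexity gives $\widetilde{L}^k(y) \geq \widetilde{L}^k(y^k(\varepsilon^k)) + \langle \nabla\widetilde{L}^k(y^k(\varepsilon^k)),\, y - y^k(\varepsilon^k)\rangle$, i.e.
\[
\langle \nabla\widetilde{L}^k(y^k(\varepsilon^k)),\, y^k(\varepsilon^k) - y\rangle \;\geq\; \widetilde{L}^k(y^k(\varepsilon^k)) - \widetilde{L}^k(y),
\]
not $\leq$. Consequently the objective of \eqref{defpblemmavanish5} satisfies $\phi^k(y) \geq L^k(y^k(\varepsilon^k)) - L^k(y)$, and maximizing over $y\in Y$ only yields the \emph{lower} bound $\eta^k(\varepsilon^k) \geq L^k(y^k(\varepsilon^k)) - h^k_{x^k}(\lambda^k(\varepsilon^k),\mu^k(\varepsilon^k))$; your duality chain then shows that this lower bound is at most $2\varepsilon^k$, which adds nothing to the trivial fact $\eta^k(\varepsilon^k)\geq 0$. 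The upper bound on $\eta^k(\varepsilon^k)$ --- the entire content of the proposition --- is not obtained, and the claimed estimate $\eta^k(\varepsilon^k)\leq 2\varepsilon^k$ is false in general.

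The failure is not cosmetic: $\eta^k(\varepsilon^k)$ is a first-order optimality residual at an approximately optimal point, and approximate optimality of the \emph{value} does not control a gradient-based residual when the function being minimized (here, through $x^k$, $\mathcal{Q}^k$, $\lambda^k$, $\mu^k$) changes with $k$ and the gradients are merely continuous. A one-dimensional illustration: for $\phi_\delta(y)=\sqrt{y^2+\delta^2}$ on $[-1,1]$, the point $y=\delta$ is $O(\delta)$-optimal, yet $\max_{y\in[-1,1]}\langle \phi_\delta'(\delta),\delta-y\rangle \to 1/\sqrt 2$ as $\delta\to 0$. This is precisely why the paper's proof proceeds by contradiction: it uses Arzel\`a--Ascoli on $(\mathcal{Q}^k)$, the multiplier bound of Proposition \ref{propboundnormepsdualsol} (via Assumption (H)), and compactness of $Y\times X$ to extract a limit $(\bar y, x_*,\lambda_*,\mu_*,\mathcal{Q}^*)$ at which $\bar y$ is an \emph{exact} minimizer of the limiting Lagrangian; only at that limit do the first-order optimality conditions \eqref{caseBposcontradfinal} force the residual to be nonpositive, contradicting the assumed positive lower bound along the subsequence. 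If you want a direct quantitative bound in the spirit of your attempt, you must assume Lipschitz gradients (as in (H4)--(H5)), so that the descent lemma used in Proposition \ref{varprop2} supplies the missing upper estimate; even then the resulting bound on the residual is of order $\sqrt{\varepsilon^k}$, not $2\varepsilon^k$.
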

\begin{proof}
For simplicity, we write $\lambda^k, \mu^k, y^k $ instead of 
$\lambda^k(  \varepsilon^k ), \mu^k ( \varepsilon^k ) ), y^k ( \varepsilon^k )$, and 
put $\mathcal{Y}(x)=\{y \in Y: \;Ay + B x = b,\;g(y,x) \leq 0\}$.
Denoting by $y_*^{k} \in \mathcal{Y}( x^k ) $ an optimal solution of \eqref{defpblemmavanish4}, we get
\begin{equation}\label{defepstkredef}
f( y_*^{k}, x^k ) + \mathcal{Q}^k ( y_*^{k} ) \leq 
f( y^{k}, x^k ) + \mathcal{Q}^k ( y^{k} ) \leq f ( y_*^{k} , x^k ) + \mathcal{Q}^k ( y_*^{k} ) + \varepsilon^k.
\end{equation}
We prove \eqref{vanishnoises5} by contradiction. 
Let ${\tilde y}^k$ be an optimal solution of \eqref{defpblemmavanish5}:
$$
\eta^k ( \varepsilon^k )=\langle \nabla_{y} f( y^{k}, x^k ) + 
A^T \lambda^k + \sum_{i=1}^p \mu^k ( i) \nabla_{y} g_{i}( y^{k}, x^k ),y^{k} - {\tilde y}^k   \rangle -\mathcal{Q}^k( {\tilde y}^k ) + \mathcal{Q}^k( y^{k} ).
$$
Assume that \eqref{vanishnoises5} does not hold. Then 
there exists $\varepsilon_0 >0$ and $\sigma_1: \mathbb{N} \rightarrow \mathbb{N}$ increasing such that
for every $k \in \mathbb{N}$ we have
{\small{
\begin{equation}\label{contradeta22}
\begin{array}{l}
\left \langle \nabla_{y} f({y}^{\sigma_1(k)}, x^{ \sigma_1(k) } )  +  A^T \lambda^{\sigma_1(k)} + 
\sum_{i=1}^p \mu^{\sigma_1(k)} ( i) \nabla_{y} g_{i}( y^{\sigma_1(k)}, x^{\sigma_1(k)} ),  -{\tilde y}^{ \sigma_1(k) }  + y^{\sigma_1(k)} \right  \rangle \\ 
+ \mathcal{Q}^{\sigma_1(k)}( y^{\sigma_1(k)}  ) - \mathcal{Q}^{\sigma_1(k)}( {\tilde y}^{\sigma_1(k)}  ) \geq \varepsilon_0. 
\end{array}
\end{equation}
}}

Now denoting by $\mathcal{C}(Y)$ the set of continuous real-valued functions on $Y$, equipped with 
norm $\|f\|_{Y} = \sup_{y \in Y}|f(y)|$, observe that the sequence $( \mathcal{Q}^{\sigma_1(k)} )_k$ in $\mathcal{C}( Y )$
\begin{itemize}
\item[(i)] is bounded: for every $k \geq 1$, for every $y \in Y$, we have:
$
-\infty < \min_{y \in Y} \underline{\mathcal{Q}}( y ) \leq \mathcal{Q}^{\sigma_1(k)}( y ) \leq \max_{y \in Y}   {\bar{\mathcal{Q}}}( y ) < +\infty;
$
\item[(ii)] is equicontinuous since functions $(\mathcal{Q}^{\sigma_1(k)})_k$ are Lipschitz continuous with Lipschitz constant $L$.
\end{itemize}
Therefore using the Arzel\`a-Ascoli theorem, this sequence has a uniformly convergent subsequence: there exists
$\mathcal{Q}^* \in \mathcal{C}( Y )$ and $\sigma_2: \mathbb{N} \rightarrow \mathbb{N}$ increasing 
such that setting $\sigma =\sigma_1 \circ \sigma_2$, we have
$\lim_{k \rightarrow +\infty} \|\mathcal{Q}^{\sigma(k)} - \mathcal{Q}^*  \|_{Y} = 0$.
Using Assumption (H) and Proposition \ref{propboundnormepsdualsol}, we obtain that
the sequence $(\lambda^{\sigma(k)}, \mu^{\sigma(k)})$ is a sequence of a compact set, say $\mathcal{D}$.
Since $({y}^{\sigma(k)}, y_*^{\sigma(k)}, {\tilde y}^{ \sigma(k) }, x^{ \sigma(k) } )_{k \geq 1}$ is a sequence of the compact set 
$Y \small{\times} Y \small{\times} Y \small{\times} X$, taking further a subsequence if needed,
we can assume that
$({y}^{\sigma(k)}, y_*^{\sigma(k)}, {\tilde y}^{ \sigma(k) }, x^{ \sigma(k) }, \lambda^{ \sigma(k) }, \mu^{ \sigma(k) } )$ converges
to some $({\bar y} , y_* , {\tilde y} , x_*, \lambda_*, \mu_* ) \in Y \small{\times} Y \small{\times} Y \small{\times} X \small{\times} \mathcal{D}$.
It follows that there is $k_0 \in \mathbb{N}$ such that for every $k \geq k_0$:
\begin{equation}\label{contradeta23}
\begin{array}{l}
\left| \left \langle \nabla_{y} f({y}^{\sigma(k)}, x^{ \sigma(k) } )  + A^T \lambda^{\sigma(k)} +  \sum_{i=1}^p \mu^{\sigma(k)} ( i) \nabla_{y} g_{i}( y^{\sigma(k)}, x^{\sigma(k)} ),  -{\tilde y}^{ \sigma(k) }  + y^{\sigma(k)} \right  \rangle \right. \\
\;\; \left. - \left \langle \nabla_{y} f({\bar y}, x_* )  + A^T \lambda_{*} + \sum_{i=1}^p \mu_{*} ( i) \nabla_{y} g_{i}( {\bar y},  x_{*} ), -{\tilde y}^{ \sigma(k) }  + {\bar y} \right  \rangle  \right| \leq \varepsilon_0/4, \\
\|{y}^{\sigma(k)} - {\bar y}\| \leq \frac{\varepsilon_0}{8 L},\;\|\mathcal{Q}^{\sigma(k)} - \mathcal{Q}^*  \|_{Y} \leq \varepsilon_0/16.
\end{array}
\end{equation}
We deduce from \eqref{contradeta22}, \eqref{contradeta23} that
{\small{
\begin{equation}\label{caseBposcontrad}
\left \langle \nabla_{y} f({\bar y}, x_{* } )  + A^T \lambda^{*} + 
\sum_{i=1}^p \mu^{*} ( i) \nabla_{y} g_{i}( {\bar y}, x_{*} ), -{\tilde y}^{ \sigma(k_0 ) }  + {\bar y} \right  \rangle +  \mathcal{Q}^{*}( {\bar y}  ) - \mathcal{Q}^{*}( {\tilde y}^{\sigma(k_0 )}  ) \geq \varepsilon_0/2 >0.
\end{equation}
}}
Due to Assumption (H), primal problem \eqref{defpblemmavanish4} and dual problem \eqref{dualpropconv0etak} have the same
optimal value and for every $y \in Y$ and $k \geq 1$ we have:
{\small{
\begin{equation}\label{proofconvqtkcruc0}
\begin{array}{l}
 f({y}^{\sigma(k)}, x^{{\sigma(k)}} ) + \mathcal{Q}^{\sigma(k)}( y^{{\sigma(k)}}  ) + 
 \langle A y^{\sigma(k)} + B x^{\sigma(k)} - b ,   \lambda^{\sigma(k)} \rangle + \langle \mu^{\sigma(k)} ,  g( y^{\sigma(k)} , x^{\sigma(k)}) \rangle  \\
\stackrel{(a)}{\leq}  f ( y_*^{\sigma(k)} , x^{\sigma(k)} ) + \mathcal{Q}^{\sigma(k)} ( y_*^{\sigma(k)} )  + \varepsilon^{\sigma(k)},\\
\stackrel{(b)}{\leq} h_{x^{\sigma(k)}}^{\sigma(k)} (\lambda^{\sigma(k)} , \mu^{\sigma(k)} ) + 2 \varepsilon^{\sigma(k)},\\
\stackrel{(c)}{\leq} f(y, x^{{\sigma(k)}} )  + \langle A y + B x^{\sigma(k)} - b , \lambda^{\sigma(k)} \rangle 
+ \langle \mu^{\sigma(k)} ,  g( y , x^{\sigma(k)} ) \rangle  +  \mathcal{Q}^{\sigma(k)}( y ) + 2 \varepsilon^{\sigma(k)}.
\end{array}
\end{equation}
}}where we have used in \eqref{proofconvqtkcruc0}-(a) the definition of $y_*^{\sigma(k)}, y^{\sigma(k)}$ and the fact that $\mu^{\sigma(k)} \geq 0, y^{\sigma(k)} \in \mathcal{Y}( x^{\sigma(k)} )$, 
in \eqref{proofconvqtkcruc0}-(b) the fact that $(\lambda^{\sigma(k)} , \mu^{\sigma(k)})$ is an $\epsilon^{\sigma(k)}$-optimal dual solution and there is no duality gap, and
in \eqref{proofconvqtkcruc0}-(c) the definition of $h_{x^{\sigma(k)}}^{\sigma(k)}$.

Taking the limit in the above relation as $k \rightarrow +\infty$, we get  for every $y \in Y$:
$$
\begin{array}{l}
f({\bar y}, x_* )  + \langle A {\bar y} + B x_* - b , \lambda_{*} \rangle + \langle \mu_{*} , g( {\bar y} , x_* ) \rangle  + \mathcal{Q}^{*}( \bar y )\\
\leq  f(y, x_* )  + \langle A y + B x_* - b ,  \lambda_{*}  \rangle + \langle \mu_{*} ,  g( y , x_{*} ) \rangle + \mathcal{Q}^{*}( y ).
\end{array}
$$
Recalling that $\bar y \in Y$ this shows that $\bar y$ is an optimal solution of
\begin{equation}
\left\{
\begin{array}{l}
\min f( y, x_* ) + \mathcal{Q}^{*}( y ) + \langle A y + B x_*  - b ,  \lambda_* \rangle + \langle \mu_* , g( y, x_* ) \rangle \\
y \in Y.
\end{array}
\right.
\end{equation}
Now recall that all functions $(\mathcal{Q}^{\sigma(k)})_k$ are convex on $Y$ and therefore the function 
$\mathcal{Q}^*$ is convex on $Y$ too. It follows that the first order optimality conditions for $\bar y$ can be written
\begin{equation}\label{caseBposcontradfinal}
\left \langle \nabla_{y} f(\bar y, x_{* } )  + A^T \lambda_{*} + \sum_{i=1}^p \mu_{*} ( i) \nabla_{y} g_{i}( \bar y , x_{*} ), y  - \bar y  \right  \rangle
+  \mathcal{Q}^{*}( y  ) - \mathcal{Q}^{*}( \bar y  ) \geq 0
\end{equation}
for all $y \in Y$. Specializing the above relation for $y = {\tilde y}^{ \sigma(k_0 ) } $, we get 
$$
\left \langle \nabla_{y} f(\bar y , x_{* } )  + A^T \lambda_{*} + \sum_{i=1}^p \mu_{*} ( i) \nabla_{y} g_{i}( \bar y , x_{*} ), {\tilde y}^{ \sigma(k_0 ) }  - \bar y  \right  \rangle
+  \mathcal{Q}^{*}( {\tilde y}^{ \sigma(k_0 ) }  ) - \mathcal{Q}^{*}( \bar y  ) \geq 0,
$$
but the left-hand side of the above inequality is $\leq -\varepsilon_0/2<0$ due to \eqref{caseBposcontrad} which yields the desired contradiction.
\end{proof}

We can now study the convergence of ISDDP-NLP:
\begin{theorem}[Convergence of ISDDP-NLP]\label{convanisddp}
Consider the sequences of stochastic decisions $x_n^k$ and of recourse functions $\mathcal{Q}_ t^k$
generated by ISDDP-NLP.
Let Assumptions (A0), (A1-NL), (SL-NL), and (A2) hold and assume that for $t=2,\ldots,T$, we have
$\lim_{k \rightarrow +\infty} \varepsilon_{t}^k =0$
and for $t=1,\ldots,T$,
$\lim_{k \rightarrow +\infty} \delta_{t}^k =0$. Then
\begin{itemize}
\item[(i)] almost surely, for $t=2,\ldots,T+1$, the following holds:
$$
\mathcal{H}(t): \;\;\;\forall n \in {\tt{Nodes}}(t-1), \;\; \displaystyle \lim_{k \rightarrow +\infty} \mathcal{Q}_{t}(x_{n}^{k})-\mathcal{Q}_{t}^{k}(x_{n}^{k} )=0.
$$
\item[(ii)]
Almost surely, the limit of the sequence
$( {F}_1^{k-1}(x_{n_1}^k , x_0 ,  \xi_1) )_k$ of the approximate first stage optimal values
and of the sequence
$({\underline{\mathfrak{Q}}}_1^{k}(x_{0}, \xi_1))_k$
is the optimal value 
$\mathcal{Q}_{1}(x_0)$ of \eqref{pbtosolve}.
Let $\Omega=(\Theta_2 \small{\times} \ldots \small{\times} \Theta_T)^{\infty}$ be the sample space
of all possible sequences of scenarios equipped with the product $\mathbb{P}$ of the corresponding 
probability measures. Define on $\Omega$ the random variable 
$x^* = (x_1^*, \ldots, x_T^*)$ as follows. For $\omega \in \Omega$, consider the 
corresponding sequence of decisions $( (x_n^k( \omega ))_{n \in \mathcal{N}} )_{k \geq 1}$
computed by ISDDP-NLP. Take any accumulation point
$(x_n^* (\omega) )_{n \in \mathcal{N}}$ of this sequence. 
If $\mathcal{Z}_t$ is the set of $\mathcal{F}_t$-measurable functions,
define $x_1^*(\omega),\ldots,x_T^*(\omega)$ taking $x_t^{*}(\omega): \mathcal{Z}_t \rightarrow \mathbb{R}^n$ given by
$x_t^{*}(\omega)( \xi_1, \ldots, \xi_t  )=x_{m}^{*}(\omega)$ where $m$ is given by $\xi_{[m]}=(\xi_1,\ldots,\xi_t)$ for $t=1,\ldots,T$.
Then
$$
\mathbb{P}((x_1^*,\ldots,x_T^*) \mbox{ is an optimal solution to \eqref{pbtosolve}})  =1.
$$
\end{itemize}
\end{theorem}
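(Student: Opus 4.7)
The plan is to prove part (i) by a backward induction on $t$, paralleling the convergence analysis of exact SDDP for the nonlinear class \eqref{pbtosolve} in \cite{guiguessiopt2016} but using the inexact-cut machinery of Section \ref{sec:computeinexactcutsnlp} to absorb the approximation errors; (ii) will then follow from (i) by a standard compactness-plus-continuity argument on the forward-pass decisions. The base case $t=T+1$ is immediate since $\mathcal{Q}_{T+1}\equiv\mathcal{Q}_{T+1}^k\equiv 0$.

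For the inductive step, assume $\mathcal{H}(t+1)$ and fix $n\in{\tt{Nodes}}(t-1)$. Under (A2) the set $K_n:=\{k:n_{t-1}^k=n\}$ is almost surely infinite. For $k\in K_n$, taking the expectation of the inexact-cut estimate supplied by Proposition \ref{varprop1} applied to each backward subproblem \eqref{primalpbisddpnlp} and combining with feasibility of the $\varepsilon_t^k$-optimal backward-pass point $x_m^{Bk}$ for the exact problem yields
\[
0\le \mathcal{Q}_t(x_n^k)-\mathcal{C}_t^k(x_n^k)\le 2\varepsilon_t^k+\eta_t^k(\varepsilon_t^k)+\sum_{m\in C(n)}p_m\bigl[\mathcal{Q}_{t+1}(x_m^{Bk})-\mathcal{Q}_{t+1}^k(x_m^{Bk})\bigr].
\]
Since $\mathcal{Q}_t\ge\mathcal{Q}_t^k\ge\mathcal{C}_t^k$, it suffices to show the right-hand side vanishes along $K_n$; extension to all $k$ then uses monotonicity of $(\mathcal{Q}_t^k)_k$, the uniform Lipschitz constant supplied by Proposition \ref{propboundeddualsto}, and compactness of $\mathcal{X}_{t-1}$. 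The first term vanishes by hypothesis. The second vanishes by Proposition \ref{propvanish1dual} applied with $\mathcal{Q}^k=\mathcal{Q}_{t+1}^k$: the uniform Lipschitz continuity and boundedness of $(\mathcal{Q}_{t+1}^k)$ are supplied by Proposition \ref{propboundeddualsto} combined with the sandwich $\mathcal{Q}_{t+1}^0\le\mathcal{Q}_{t+1}^k\le\mathcal{Q}_{t+1}$, and the Slater-type hypothesis (H) of that proposition is exactly (SL-NL) at stage $t$.

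The hard part will be controlling the last sum, because $\mathcal{H}(t+1)$ only controls $\mathcal{Q}_{t+1}-\mathcal{Q}_{t+1}^k$ at the forward-pass points $x_m^k$, whereas $x_m^{Bk}$ is the distinct backward-pass point. To bridge this gap, the plan is to extract from an arbitrary subsequence of $K_n$ a further subsequence $k_j$ along which Arzel\`a--Ascoli (applied with the uniform Lipschitz constant from Proposition \ref{propboundeddualsto}) gives $\mathcal{Q}_{t+1}^{k_j}\to\mathcal{Q}_{t+1}^\infty$ uniformly on $\mathcal{X}_t$ for some continuous convex $\mathcal{Q}_{t+1}^\infty\le\mathcal{Q}_{t+1}$, while simultaneously $x_n^{k_j}\to x_n^*$, $x_m^{k_j}\to x_m^\circ$, and $x_m^{Bk_j}\to x_m^*$ for every $m\in C(n)$; the monotonicity $\mathcal{Q}_{t+1}^{k-1}\le\mathcal{Q}_{t+1}^k$ (with Lemma \ref{limsuptechlemma} invoked if needed to interchange $\mathcal{Q}_{t+1}^{k}$ and $\mathcal{Q}_{t+1}^{k-1}$ at forward-pass points) ensures that $(\mathcal{Q}_{t+1}^{k_j-1})_j$ shares the same uniform limit $\mathcal{Q}_{t+1}^\infty$. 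Berge's maximum theorem then shows that both $x_m^\circ$ and $x_m^*$ are optimal for
\[
\min\bigl\{f_t(x_m,x_n^*,\xi_m)+\mathcal{Q}_{t+1}^\infty(x_m)\;:\;x_m\in X_t(x_n^*,\xi_m)\bigr\}.
\]
Applying $\mathcal{H}(t+1)$ at the forward-pass node $m$ together with uniform convergence yields $\mathcal{Q}_{t+1}^\infty(x_m^\circ)=\mathcal{Q}_{t+1}(x_m^\circ)$. Combined with $\mathcal{Q}_{t+1}^\infty\le\mathcal{Q}_{t+1}$ and the fact that $x_m^\circ$ is also feasible for the exact problem, this forces the limit problem's optimal value to equal $\mathfrak{Q}_t(x_n^*,\xi_m)$; evaluating this equality at $x_m^*$ then forces $\mathcal{Q}_{t+1}^\infty(x_m^*)=\mathcal{Q}_{t+1}(x_m^*)$, and continuity of $\mathcal{Q}_{t+1}$ together with uniform convergence finish the inductive step.

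For part (ii), applying $\mathcal{H}(2)$ at $n_1$ immediately gives almost-sure convergence of the lower bound ${\underline{\mathfrak{Q}}}_1^{k}(x_0,\xi_1)$ to $\mathcal{Q}_1(x_0)$, while $F_1^{k-1}(x_{n_1}^k,x_0,\xi_1)\to\mathcal{Q}_1(x_0)$ as well by $\delta_1^k\to 0$, continuity of $f_1$, and the uniform convergence of $\mathcal{Q}_2^{k-1}$ at accumulation points established in (i). Passing to the limit along a subsequence on which all $(x_n^k)_{n\in\mathcal{N}}$ converge and invoking the same Berge-type stability argument on each forward-pass subproblem \eqref{defxtkjnlp} shows that any accumulation point $(x_n^*)_{n\in\mathcal{N}}$ satisfies the exact Bellman equations \eqref{firststodp}--\eqref{secondstodp}; the induced random variables $x_t^*$ are $\mathcal{F}_t$-measurable by construction on the scenario tree, so $(x_1^*,\ldots,x_T^*)$ is $\mathbb{P}$-almost surely an optimal solution of \eqref{pbtosolve}.
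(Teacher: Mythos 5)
Your overall skeleton (backward induction on $t$, the base case $t=T+1$, the use of Propositions \ref{varprop1}, \ref{propboundeddualsto} and \ref{propvanish1dual}, and a separate argument to pass from the sampled iterations to all iterations) matches the paper's, but your central inequality is set up differently and this is where the proof breaks. You bound $\mathcal{Q}_t(x_n^k)-\mathcal{C}_t^k(x_n^k)$ by a sum whose last term is $\sum_{m\in C(n)}p_m\bigl[\mathcal{Q}_{t+1}(x_m^{Bk})-\mathcal{Q}_{t+1}^k(x_m^{Bk})\bigr]$, evaluated at the \emph{backward}-pass points, and you correctly flag that $\mathcal{H}(t+1)$ gives no control there. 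But your proposed bridge does not close the gap: after establishing that the limit problem has value $\mathfrak{Q}_t(x_n^*,\xi_m)$ and that $x_m^*=\lim_j x_m^{Bk_j}$ is optimal for it, the relation $f_t(x_m^*,x_n^*,\xi_m)+\mathcal{Q}_{t+1}^\infty(x_m^*)=\mathfrak{Q}_t(x_n^*,\xi_m)\le f_t(x_m^*,x_n^*,\xi_m)+\mathcal{Q}_{t+1}(x_m^*)$ only yields $\mathcal{Q}_{t+1}^\infty(x_m^*)\le\mathcal{Q}_{t+1}(x_m^*)$, which you already knew; it does not ``force'' the equality you need. Nothing prevents the limit problem from having extra minimizers located where $\mathcal{Q}_{t+1}^\infty$ strictly undershoots $\mathcal{Q}_{t+1}$, and the backward-pass points may accumulate exactly there, in which case your last term does not vanish and your bound proves nothing. (A secondary weakness: the extension from $k\in K_n$ to all $k$ cannot be obtained from monotonicity, the uniform Lipschitz constant and compactness alone; it requires the Strong Law of Large Numbers argument based on (A2), as in Lemma 5.4 of \cite{guilejtekregsddp}.)

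The paper avoids the backward-pass points altogether by a different chain of inequalities, and you should restructure your inductive step around it. One bounds $\theta_t^{km}=\mathcal{L}_{t m}^k(x_m^{Bk},\lambda_m^k,\mu_m^k)\ge h_{t,x_n^k}^{km}(\lambda_m^k,\mu_m^k)\ge{\underline{\mathfrak{Q}}}_t^k(x_n^k,\xi_m)-\varepsilon_t^k$ using the $\varepsilon_t^k$-optimality of the dual solution, then uses monotonicity ${\underline{\mathfrak{Q}}}_t^k\ge{\underline{\mathfrak{Q}}}_t^{k-1}$ and the $\delta_t^k$-optimality of the \emph{forward} point $x_m^k$ to get ${\underline{\mathfrak{Q}}}_t^{k-1}(x_n^k,\xi_m)\ge F_t^{k-1}(x_m^k,x_n^k,\xi_m)-\delta_t^k\ge\mathfrak{Q}_t(x_n^k,\xi_m)+\mathcal{Q}_{t+1}^{k-1}(x_m^k)-\mathcal{Q}_{t+1}(x_m^k)-\delta_t^k$, the last step using only the feasibility of $x_m^k$ for the exact problem. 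The resulting bound \eqref{eqconv1bisfutures} involves $\mathcal{Q}_{t+1}-\mathcal{Q}_{t+1}^{k-1}$ only at the forward-pass points, where $\mathcal{H}(t+1)$ applies after the index shift supplied by Lemma A.1 of \cite{lecphilgirar12}. With that chain in place, the remaining ingredients of your outline (Proposition \ref{propvanish1dual} for $\eta_t^{km}(\varepsilon_t^k)\to 0$, the SLLN extension beyond $K_n$, and the stability argument for part (ii)) are the right ones.
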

\begin{proof} Let $\Omega_1$ be the event on the sample space $\Omega$  of sequences
of scenarios such that every scenario is sampled an infinite number of times.
Due to (A2), this event has probability one.
Take an arbitrary  realization $\omega$ of ISDDP-NLP in $\Omega_1$. 
To simplify notation we will use $x_n^k, \mathcal{Q}_t^k, \theta_t^k, \eta_t^k(\varepsilon_t^k), \beta_t^k, \lambda_m^k, \mu_m^k$ instead 
of $x_n^k(\omega), \mathcal{Q}_t^k(\omega), \theta_t^k(\omega)$, $\eta_t^k(\varepsilon_t^k)(\omega)$, $\beta_t^k( \omega ), \lambda_m^k( \omega ), \mu_m^k ( \omega )$.
\par Let us prove (i). 
We want to show that $\mathcal{H}(t), t=2,\ldots,T+1$, hold for that realization.
The proof is by backward induction on $t$. For $t=T+1$, $\mathcal{H}(t)$ holds
by definition of $\mathcal{Q}_{T+1}$, $\mathcal{Q}_{T+1}^k$. Now assume that $\mathcal{H}(t+1)$ holds
for some $t \in \{2,\ldots,T\}$. We want to show that $\mathcal{H}(t)$ holds.
Take an arbitrary node $n \in {\tt{Nodes}}(t-1)$. For this node we define 
$\mathcal{S}_n=\{k \geq 1 : n_{t-1}^k = n\}$ the set of iterations such that the sampled scenario passes through node $n$.
Observe that $\mathcal{S}_n$ is infinite because the realization of ISDDP-NLP is in $\Omega_1$.
We first show that 
$
\displaystyle \lim_{k \rightarrow +\infty, k \in \mathcal{S}_n } \mathcal{Q}_{t}(x_{n}^{k})-\mathcal{Q}_{t}^{k}(x_{n}^{k} )=0. 
$
For $k \in \mathcal{S}_n$, we have $n_{t-1}^k =n$, i.e., $x_n^k = x_{n_{t-1}^k}^k$, which implies
\begin{equation}\label{firsteqstosddp}
\mathcal{Q}_t ( x_n^k ) \geq \mathcal{Q}_t^k ( x_n^k ) \geq \mathcal{C}_t^k ( x_n^k ) = \theta_t^k - \eta_t^{k}(\varepsilon_t^k ) =  \sum_{m \in C(n)} p_m ( \theta_{t}^{k m} - \eta_{t}^{k m}(\varepsilon_t^k ) ). 
\end{equation}
Let us now bound $\theta_{t}^{k m}$ from below:
$$
\theta_t^{k m} \stackrel{\eqref{defcoeffsisddp}}{=}
\mathcal{L}_{t m}^k (x_m^{B k}, \lambda_m^k, \mu_m^k ) \geq 
h_{t, x_n^k}^{k m}( \lambda_m^k , \mu_m^k )  \stackrel{\eqref{defepssoldual}}{\geq} 
{\underline{\mathfrak{Q}}}_t^k(x_n^k , \xi_m ) - \varepsilon_t^k
$$
where for the first inequality we have used the definition of $h_{t, x_n^k}^{k m}$ and the fact that  
$x_m^{B k} \in \mathcal{X}_t$.
Next, we have the following lower bound on ${\underline{\mathfrak{Q}}}_t^k(x_n^k , \xi_m )$ for all $k \in \mathcal{S}_n$:
\begin{equation} \label{eqconv1bis0s}
\begin{array}{lcl}
{\underline{\mathfrak{Q}}}_t^k(x_n^k , \xi_m ) & \geq & {\underline{\mathfrak{Q}}}_t^{k-1}(x_n^k , \xi_m )  \mbox{ by monotonicity,}\\
&  \stackrel{\eqref{epssolforward}}{\geq}  &  F_t^{k-1}(x_m^{k}, x_{n}^k, \xi_m ) -  \delta_t^k,\\
& = &  F_t(x_m^{k}, x_{n}^k , \xi_m) + \mathcal{Q}_{t+1}^{k-1}(x_m^{k}) - \mathcal{Q}_{t+1}( x_m^{k})    -  \delta_t^k,\\
& \geq &  \mathfrak{Q}_t(x_{n}^k , \xi_m )+ \mathcal{Q}_{t+1}^{k-1}(x_m^{k}) - \mathcal{Q}_{t+1}( x_m^{k})  -  \delta_t^k, 
\end{array}
\end{equation}
where for the last inequality we have used the definition of $\mathfrak{Q}_t$  and the fact that $x_m^k \in X_t ( x_n^k , \xi_m )$.
Combining \eqref{firsteqstosddp} with \eqref{eqconv1bis0s} and using our lower bound on $\theta_t^{k m}$, we obtain
\begin{equation} \label{eqconv1bisfutures}
0 \leq \mathcal{Q}_{t}(x_{n}^k) - \mathcal{Q}_{t}^k(x_{n}^k)  \leq  \delta_t^k +  \varepsilon_t^k + \displaystyle \sum_{m \in C(n)} p_m \eta_t^{k m}( \varepsilon_t^ k ) + \displaystyle \sum_{m \in C(n)} p_m \Big(  \mathcal{Q}_{t+1}( x_m^k ) - \mathcal{Q}_{t+1}^{k-1}( x_m^k ) \Big).
\end{equation}
We now show that for every $m \in C(n)$, we have 
\begin{equation}\label{noisesvanishisddp}
\lim_{k \rightarrow +\infty, k \in \mathcal{S}_n} \eta_t^{k m}( \varepsilon_t^ k ) = 0. 
\end{equation}
Let us fix $m \in C(n)$. Decision $x_m^{B k}$ is an $\varepsilon_t^k$-optimal solution of
\begin{equation} \label{defxtkjisddp2}
\begin{array}{l}
\left\{
\begin{array}{l}
\displaystyle \inf_{x_m} \;f_t( x_m , x_n^k , \xi_m) + \mathcal{Q}_{t+1}^k ( x_m )\\ 
x_m \in X_t(x_n^k , \xi_m   ),\\
\end{array}
\right.
\end{array}
\end{equation}
and $\eta_t^{k m}( \varepsilon_t^ k )$ is the optimal value of the following optimization problem:
\begin{equation} \label{probetakmsto2}
\begin{array}{l}
{\small{
\begin{array}{ll}
\displaystyle \max_{x_m \in \mathcal{X}_t} & \displaystyle  \langle \nabla_{x_t} f_t ( x_m^{B k}, x_n^k, \xi_m )  +  A_m^T \lambda_m^k + \sum_{i=1}^p \mu_m^{k}(i) \nabla_{x_t} g_{t i}(x_m^{B k}, x_n^k, \xi_m ) , x_m^{B k} - x_m  \rangle  \\
& +\mathcal{Q}_{t+1}^{k}( x_m^{B k} ) - \mathcal{Q}_{t+1}^{k}( x_m ).
\end{array}
}}
\end{array}
\end{equation}
We now check that Proposition \ref{propvanish1dual} can be applied to problems \eqref{defxtkjisddp2}, \eqref{probetakmsto2} setting:
\begin{itemize}
\item $Y=\mathcal{X}_t, X=\mathcal{X}_{t-1}$ which are nonempty compact, and convex;
\item $f(y, x)=f_t(y,x,\xi_m)$ which is convex and continuously differentiable on $Y \small{\times} X$;
\item $g(y,x)=g_t(y,x,\xi_m) \in \mathcal{C}^1( Y \small{\times} X)$ with components $g_i,i=1,\ldots,p$, convex
on $Y \small{\times}X^{\varepsilon}$;
\item $\mathcal{Q}^k=\mathcal{Q}_{t+1}^k$ which is convex Lipschitz continuous on $Y$ with Lipschitz constant $L_{t+1}$
($L_{t+1}$ is an upper bound on 
$(\|\beta_{t+1}^k \|)_{k \in \mathcal{S}_n}$, see Proposition  \ref{propboundeddualsto}) and satisfies
$$
{\underline{Q}} := \mathcal{Q}_{t+1}^1 \leq \mathcal{Q}^k \leq {\bar{\mathcal{Q}}}:=\mathcal{Q}_{t+1}
$$
on $Y$ with ${\underline{Q}}, {\bar{\mathcal{Q}}}$ continuous on $Y$;
\item $(x^k ) = (x_{n}^k)_{k \in \mathcal{S}_n}$ sequence in $X$, $(y^k)_{k \in \mathcal{S}_n} = (x_m^{B k})_{k \in \mathcal{S}_n}$ sequence in $Y$, and
$(\lambda^k, \mu^k )_{k \in \mathcal{S}_n} = (\lambda_m^k , \mu_m^k )_{k \in \mathcal{S}_n}$. 
\end{itemize}
With this notation Assumption (H) is satisfied with $\kappa=\kappa_t$, since Assumption (SL-NL) holds.
Therefore we can apply Proposition \ref{propvanish1dual} to obtain \eqref{noisesvanishisddp}.

Next, recall that $\mathcal{Q}_{t+1}$ is convex; functions $(\mathcal{Q}_{t+1}^k)_k$ are $L_{t+1}$-Lipschitz;
and for all $k \geq 1$ we have $\mathcal{Q}_{t+1}^k \leq \mathcal{Q}_{t+1}^{k+1} \leq\mathcal{Q}_{t+1}$ on compact set $\mathcal{X}_t$.
Therefore, the induction hypothesis
$
\lim_{k \rightarrow +\infty} \mathcal{Q}_{t+1}( x_m^k ) - \mathcal{Q}_{t+1}^k ( x_m^k )=0 
$
implies, using Lemma A.1 in \cite{lecphilgirar12}, that
\begin{equation}\label{indchypreformulatedsto}
\lim_{k \rightarrow +\infty} \mathcal{Q}_{t+1}( x_m^k ) - \mathcal{Q}_{t+1}^{k-1} ( x_m^k )=0 . 
\end{equation}

Plugging \eqref{noisesvanishisddp} and \eqref{indchypreformulatedsto} into
\eqref{eqconv1bisfutures} we obtain 
\begin{equation}\label{mainisddp}
\displaystyle \lim_{k \rightarrow +\infty, k \in \mathcal{S}_n } \mathcal{Q}_{t}(x_{n}^{k})-\mathcal{Q}_{t}^{k}(x_{n}^{k} )=0. 
\end{equation}
It remains to show that
$
\displaystyle \lim_{k \rightarrow +\infty, k \notin \mathcal{S}_n } \mathcal{Q}_{t}(x_{n}^{k})-\mathcal{Q}_{t}^{k}(x_{n}^{k} )=0. 
$
This relation can be proved using Lemma 5.4 in \cite{guilejtekregsddp} which can be applied since 
(A) relation \eqref{mainisddp} holds (convergence was shown for the iterations in $\mathcal{S}_n$),
(B) the sequence $(\mathcal{Q}_t^k)_k$ is monotone, i.e., 
$\mathcal{Q}_t^k \geq \mathcal{Q}_t^{k-1}$ for all $k \geq 1$, (C) Assumption (A2) holds, and
(D) $\xi_{t-1}^k$ is independent on $( (x_{n}^j,j=1,\ldots,k), (\mathcal{Q}_{t}^j,j=1,\ldots,k-1))$.\footnote{Lemma 5.4 in \cite{guilejtekregsddp} is similar to the end of the proof of Theorem 4.1 in \cite{guiguessiopt2016} and uses
the Strong Law of Large Numbers. This lemma itself applies the ideas of the end of the convergence proof of SDDP given in \cite{lecphilgirar12}, which
was given with a different (more general) sampling scheme in the backward pass.} Therefore, we have shown (i).

\par (ii) The proof is similar to the proof of \cite[Theorem 4.1-(ii)]{guiguessiopt2016}. 
\end{proof}

\begin{remark}\label{importantremarknlp}
In ISDDP-NLP algorithm presented in Section \ref{sec:isddpalgo}, decisions are computed at every iteration for all the nodes of the scenario tree
in the forward pass.
However, in practice, at iteration $k$  decisions will only be computed for the nodes $(n_1^k,\ldots,n_T^k)$
and their children nodes. For this variant of ISDDP-NLP, the backward pass is exactly the same as the backward of ISDDP-NLP presented in Section \ref{sec:isddpalgo}
while the forward pass reads as follows: we select a set of nodes $(n_1^k, n_2^k, \ldots, n_T^k)$ 
with $n_t^k$ a node of stage $t$ ($n_1^k=n_1$ and for $t \geq 2$, $n_t^k$
is a child node of $n_{t-1}^k$)
corresponding to a sample $({\tilde \xi}_1^k, {\tilde \xi}_2^k,\ldots, {\tilde \xi}_T^k)$
of $(\xi_1, \xi_2,\ldots, \xi_T)$. More precisely, for $t=1,\ldots,T$,
setting $m=n_{t}^k$ and $n=n_{t-1}^k$, we compute a $\delta_t^k$-optimal solution $x_m^k$ of
\begin{equation} \label{defxtkjnlpbis}
{\underline{\mathfrak{Q}}}_t^{k-1}( x_n^k , \xi_m ) = \left\{
\begin{array}{l}
\displaystyle \inf_{y} \; F_t^{k-1}(y , x_n^k, \xi_m):= f_t( y , x_n^k , \xi_m ) + \mathcal{Q}_{t+1}^{k-1}( y ) \\
y \in X_t( x_n^k, \xi_m ),
\end{array}
\right.
\end{equation}
This variant of ISDDP-NLP will build the same cuts and compute the same decisions for the nodes of the
sampled scenarios as ISDDP-NLP described in Section \ref{sec:isddpalgo}. For this variant, for a node $n$, the decision variables $(x_n^k)_k$ are defined for
an infinite subset ${\tilde{\mathcal{S}}}_{n}$ of iterations where the sampled scenario passes through the parent node of node $n$, i.e., 
${\tilde{\mathcal{S}}}_{n}=\mathcal{S}_{\mathcal{P}(n)}$.
With this notation, for this variant, applying Theorem \ref{convanisddp}-(i), we get for $t=2,\ldots,T+1$,
$
\mbox{for all }n \in {\tt{Nodes}}(t-1), \lim_{k \rightarrow +\infty, k \in \mathcal{S}_{\mathcal{P}(n)}} \mathcal{Q}_{t}(x_{n}^{k})-\mathcal{Q}_{t}^{k}(x_{n}^{k} )=0
$
almost surely. Also a.s., the limit of the sequence
$({F}_1^{k-1}(x_{n_1}^k , x_0 , \xi_1) )_k$ of the approximate first stage optimal values
is the optimal value 
$\mathcal{Q}_{1}(x_0)$ of \eqref{pbtosolve}. The variant of ISDDP-NLP without sampling in the forward pass was presented  first, to allow for the application
of Lemma 5.4 from \cite{guilejtekregsddp}. More specifically, item (D): $\xi_{t-1}^k$ is independent on $( (x_{n}^j,j=1,\ldots,k), (\mathcal{Q}_{t}^j,j=1,\ldots,k-1))$, 
given in the end of the proof of Theorem \ref{convanisddp}-(i) does not apply for ISDDP-NLP with sampling in the forward pass.
\end{remark}

\section{Numerical experiments} \label{numexp}

Our goal in this section is to compare SDDP and ISDDP-LP (denoted for short ISDDP in what follows) on the risk-neutral portfolio problem with direct transaction costs presented in
Section 5.1 of \cite{guilejtekregsddp} (see \cite{guilejtekregsddp} for details).
For this application, $\xi_t$ is the vector of asset returns: if 
$n$ is the number of
risky assets, $\xi_t$ has size $n+1$,
$\xi_t(1:n)$ is the vector of risky asset returns for stage $t$ 
while $\xi_t(n+1)$ is the return of the risk-free asset. We generate four instances of this portfolio problem as follows.

For fixed $T$ (number of stages) and $n$ (number of risky assets),
the distributions of $\xi_t(1:n),t=2,\ldots,T$,  have $M$ realizations 
with $p_{t i}=\mathbb{P}(\xi_t = \xi_{t i})=1/M$, and
$\xi_1(1:n), \xi_{t 1}(1:n), \ldots, \xi_{t M}(1:n)$ obtained sampling
from a normal distribution with mean and standard deviation chosen 
randomly in respectively the intervals $[0.9,1.4]$ and $[0.1,0.2]$.
The monthly return $\xi_t(n+1)$ of the risk-free asset is $1.01$ for all $t$.
The initial portfolio $x_0$ has components uniformly distributed in $[0,10]$ (vector of initial wealth in each asset).
The largest possible position in any security is set to $u_i=20\%$.  
Transaction costs are known with
$\nu_t(i)=\mu_t(i)$
obtained sampling from the distribution of the random variable 
$0.08+0.06\cos(\frac{2\pi}{T} U_T )$ where $U_T$ is a random variable
with a discrete distribution over the set of integers $\{1,2,\ldots,T\}$.
Our four instances of the portfolio problem are obtained taking
for $(M,T,n)$ the combinations of values $(100,10,50)$, $(100,30,50)$, $(50,20,50)$, and 
$(50,40,10)$.
All linear subproblems of the forward and backward passes
are solved numerically using Mosek solver \cite{mosek} and
for ISDDP, we solve approximately these subproblems limiting the number of iterations 
of Mosek solver as indicated in Table \ref{tablenumberiter0} in the Appendix.
The strategy given in this table is (as indicated in Remark \ref{remchoicepssto}) to increase the accuracy
(or, equivalently, increase the maximal number of iterations allowed for Mosek solver)
of the solutions to subproblems as ISDDP iteration increases and 
for a given iteration of ISDDP, to increase the accuracy 
(or, equivalently, increase the maximal number of iterations allowed for Mosek solver)
of the solutions to subproblems as the number of stages increases from $t=2$ to $t=T$, knowing that we
solve exactly the subproblems for the last stage $T$ and for the first stage $t=1$.

SDDP and ISDDP were implemented in Matlab and the code was run on a Xeon E5-2670 processor with 
384 GB of RAM. 
For a given instance, SDDP and ISDDP were run using the same set of sampled scenarios 
along iterations. We stopped  SDDP algorithm when the gap is $<10\%$
and run ISDDP for the same number of iterations.\footnote{The gap is defined as $\frac{Ub-Lb}{Ub}$ where $Ub$ and $Lb$ correspond to upper and lower bounds, respectively. 
Though the portfolio problem is a maximization 
problem (of the mean income), we have rewritten it as a minimization problem (of the mean loss), of form \eqref{firststodp}, \eqref{secondstodp}.
The lower bound $Lb$ is the optimal value of the first stage problem and the 
upper  bound $Ub$ is the upper end of a 97.5\%-one-sided confidence interval on the optimal value for $N=100$ policy realizations, see 
\cite{shapsddp} for a detailed discussion on this stopping criterion.}

On our four instances, we then simulate the policies obtained with SDDP and ISDDP
on a set of 500 scenarios of returns. The gap between the two policies on these scenarios and the CPU time reduction
using ISDDP are given in 
Table \ref{tablecpuisddp}. In this table, the gap is defined by 
$100 \frac{\tt{Cost ISDDP}-\tt{Cost SDDP}}{\tt{Cost SDDP}}$
where 
${\tt{Cost ISDDP}}$ and $\tt{Cost SDDP}$ are respectively the mean cost for ISDDP and
SDDP policies on the 500 simulated scenarios and the CPU time reduction is given by
$100 \frac{\tt{Time SDDP}-\tt{Time ISDDP}}{\tt{Time SDDP}}$
where $\tt{Time SDDP}$ and $\tt{Time ISDDP}$ correspond to the time needed to 
compute SDDP and ISDDP policies (before running the Monte Carlo simulation), respectively. 

On all instances the gap is relatively small and ISDDP policy is computed faster than SDDP policy.

\begin{table}[H]
\centering
\begin{tabular}{|c|c|c|c|c|}
\hline
$M$ & $T$ & $n$ & Gap (\%) & CPU time reduction (\%) \\
\hline
50 & 20 &   50  & 0.1 & 6.2 \\
\hline
50 & 40 &   10  & 4.2 & 11.1 \\
\hline 
100 & 10 &   50  & 0.8 & 6.5 \\
\hline 
100 & 30 &   50  & 3.4 & 6.4 \\
\hline 
\end{tabular}
\caption{Empirical gap between SDDP and ISDDP policies and CPU time reduction for ISDDP over SDDP.}\label{tablecpuisddp}
\end{table}

\par More precisely, we report in Figure \ref{fig1sddp} (for instances with $(M,T,n)=(100,10,50)$ and $(M,T,n)=(100,30,50)$)
and Figure \ref{fig2sddp} (for instances with $(M,T,n)=(50,20,50)$
and $(M,T,n)=(50,40,10)$) three outputs along the iterations
of SDDP and ISDDP: the cumulative CPU time (in seconds), the number of iterations needed for Mosek LP solver to solve
all backward and forward subproblems, and the upper and lower bounds on the optimal value computed
by the methods (note that the upper bounds are only computed from iteration 100 on, because the past
$N=100$ iterations are used to compute them). 

\par These experiments (i) show that it is possible to 
obtain a near optimal policy quicker than SDDP solving approximately some subproblems in SDDP and (ii) confirm that ISDDP computes a valid lower bound since
first stage subproblems are solved exactly. For the first iterations, this lower bound can however be
distant from SDDP lower bound (see for instance the bottom left plots of Figures 
\ref{fig1sddp} and \ref{fig2sddp}). However, both  SDDP and ISDDP lower and upper bounds are 
quite close after 200 iterations, even if Mosek LP solver uses much less iterations to solve
the subproblems with ISDDP (see the middle plots of Figures \ref{fig1sddp}, \ref{fig2sddp}). 
The total CPU time needed by ISDDP is significantly inferior but this CPU time reduction decreases
when the number of iterations increases. If many iterations are required to solve the problem,
after a few hundreds iterations backward and forward subproblems are solved in similar CPU time for SDDP and ISDDP and the total CPU time
reduction starts to stabilize.

\begin{figure}
\centering
\begin{tabular}{cc}
\includegraphics[scale=0.6]{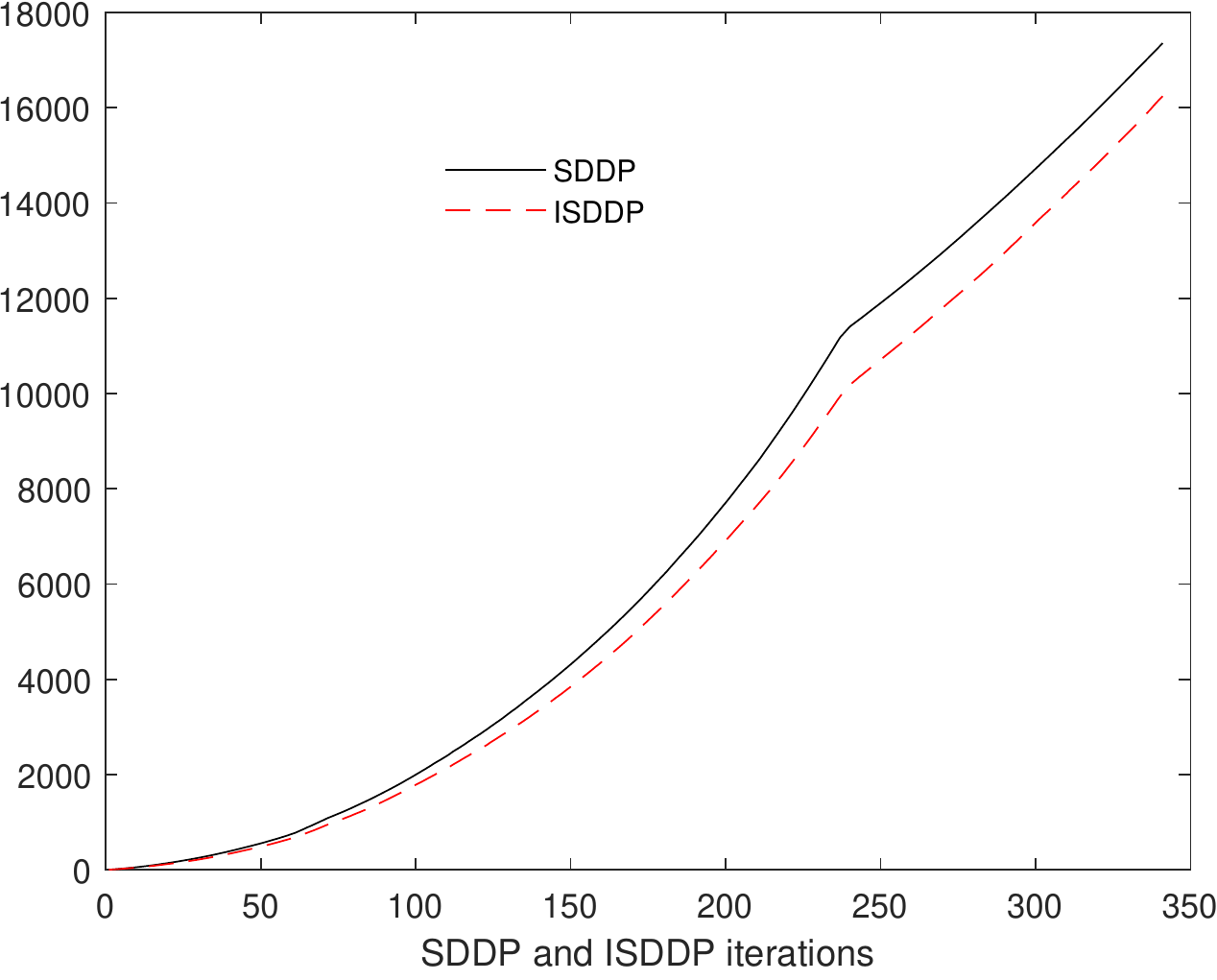}
&
\includegraphics[scale=0.6]{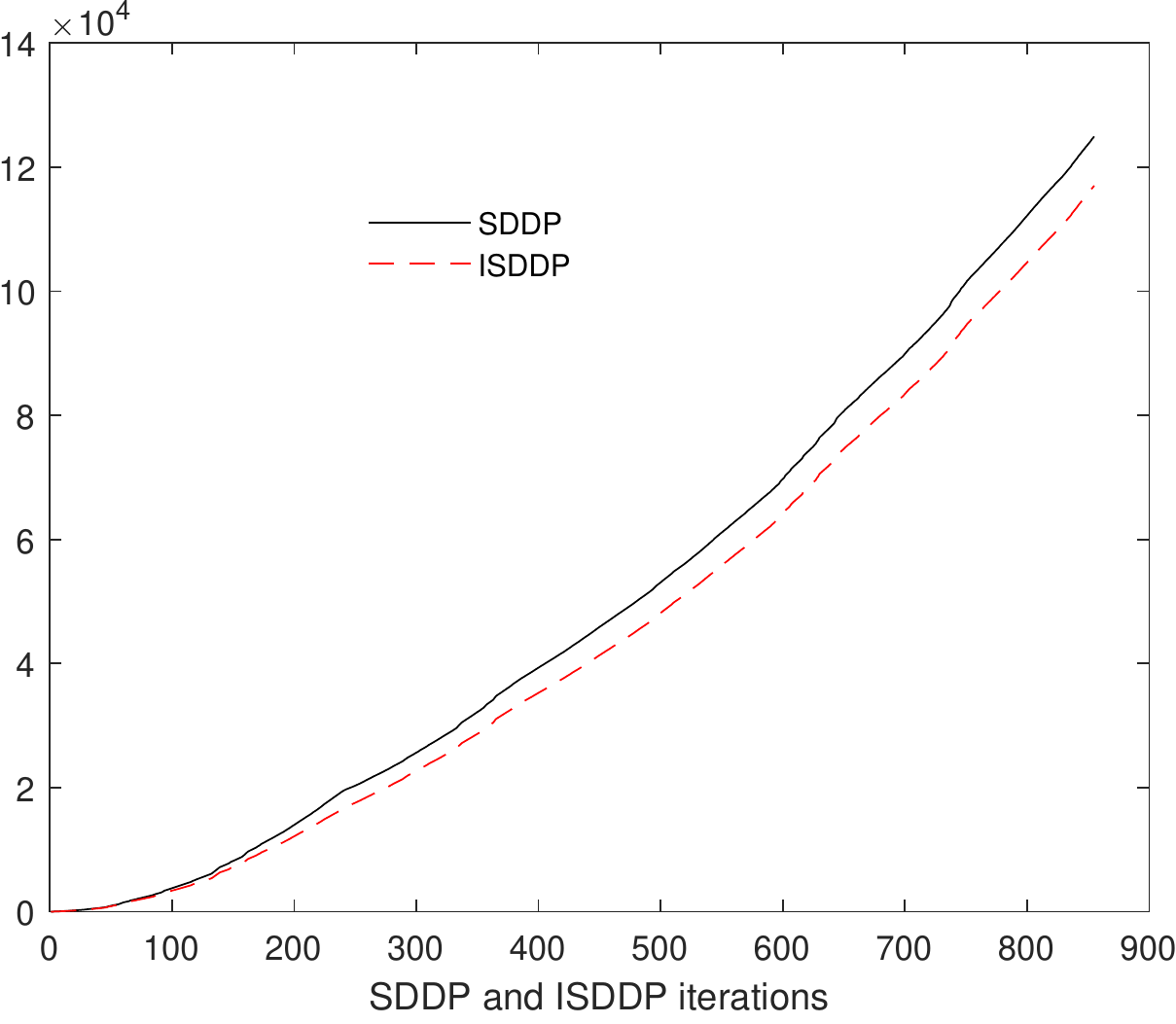}
\\
\includegraphics[scale=0.6]{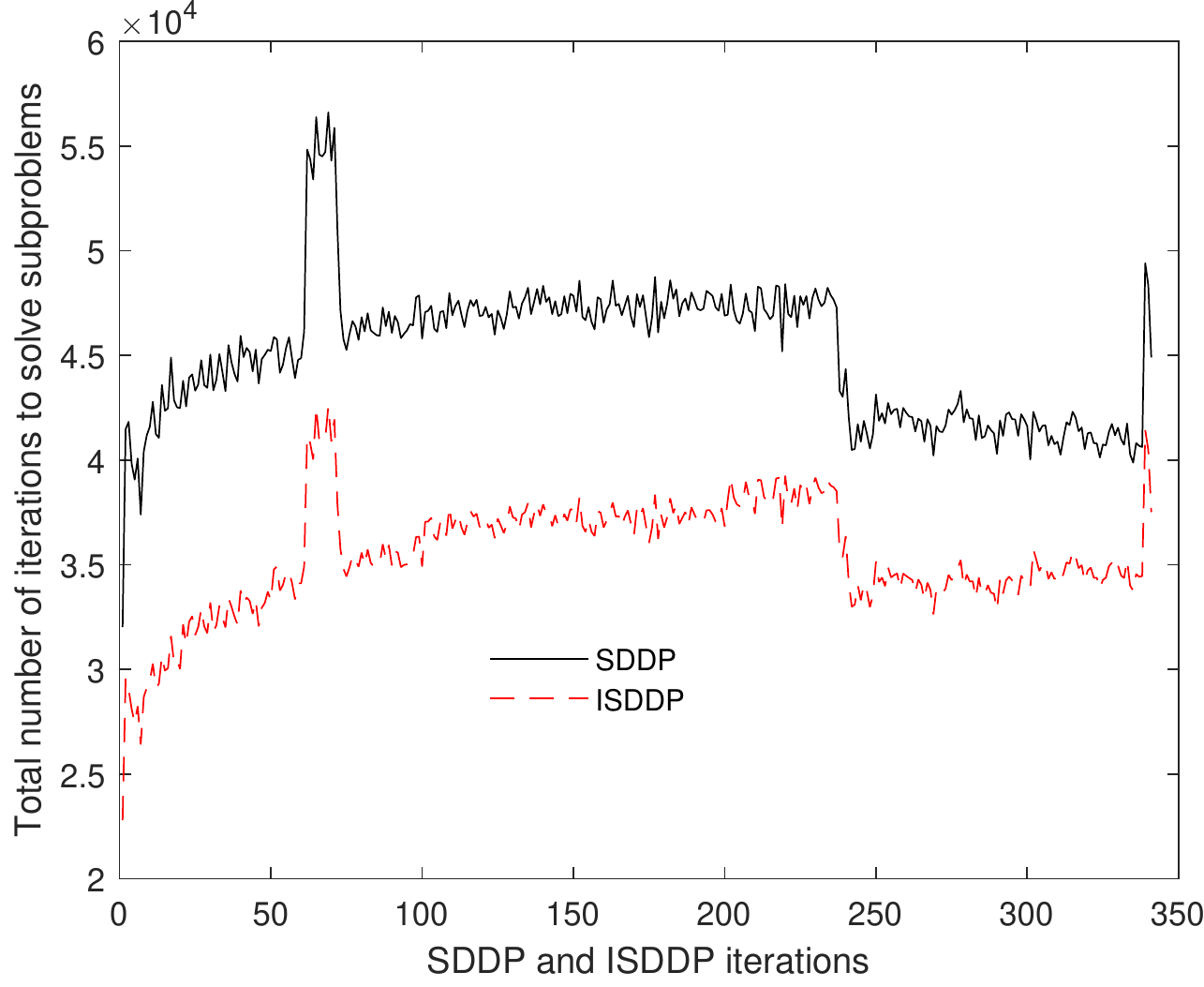}
&
\includegraphics[scale=0.6]{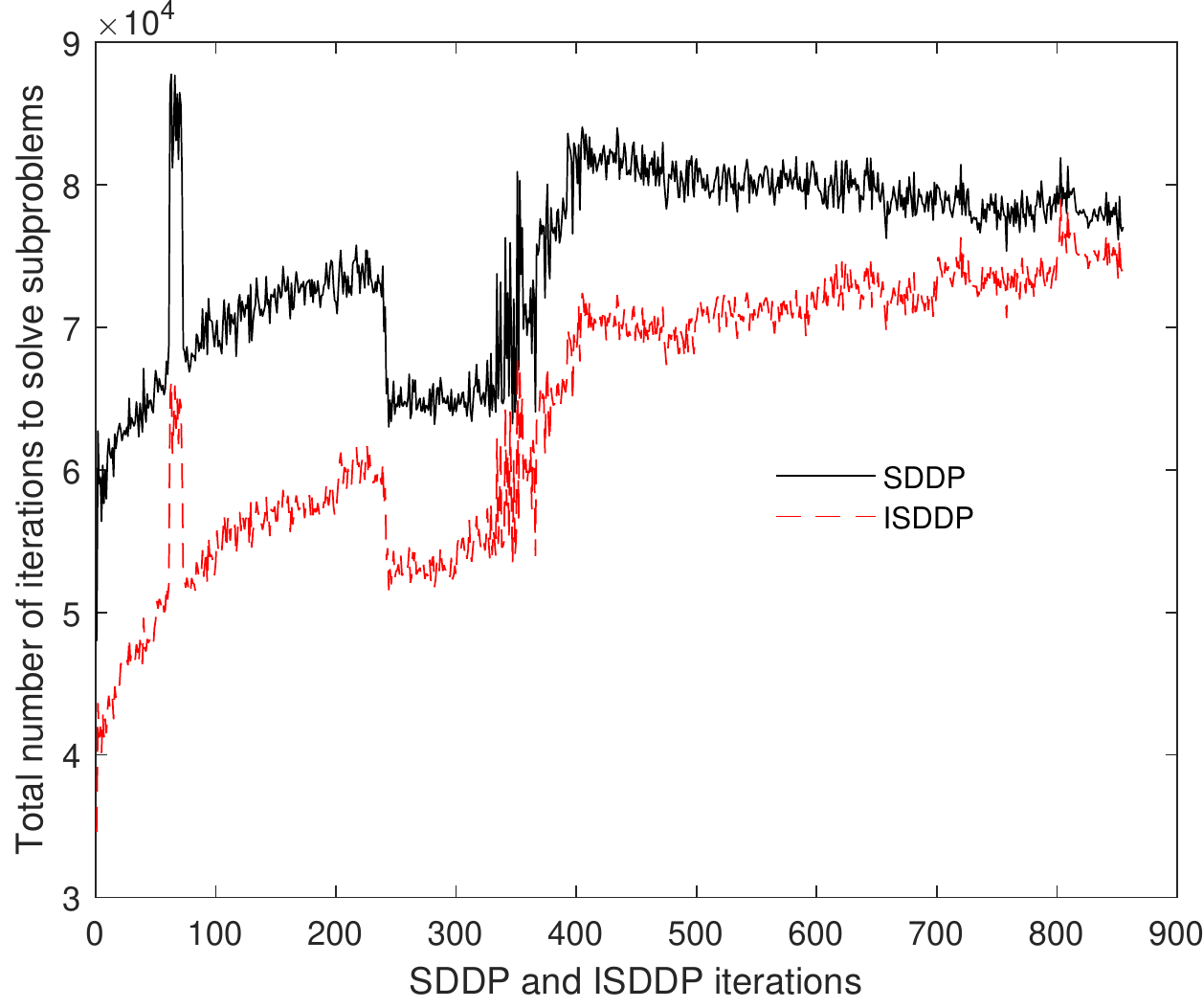}
\\
\includegraphics[scale=0.6]{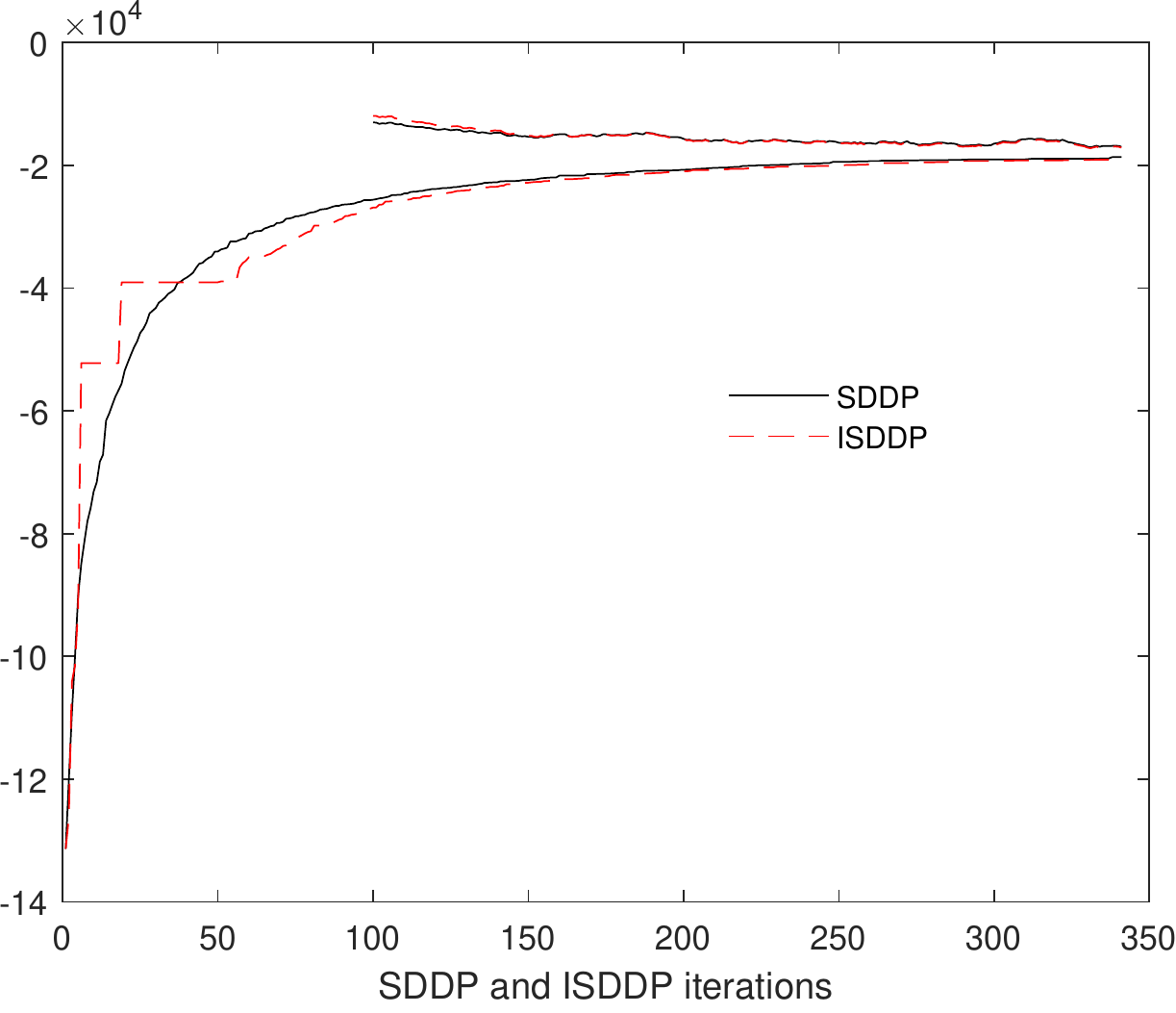}
&
\includegraphics[scale=0.6]{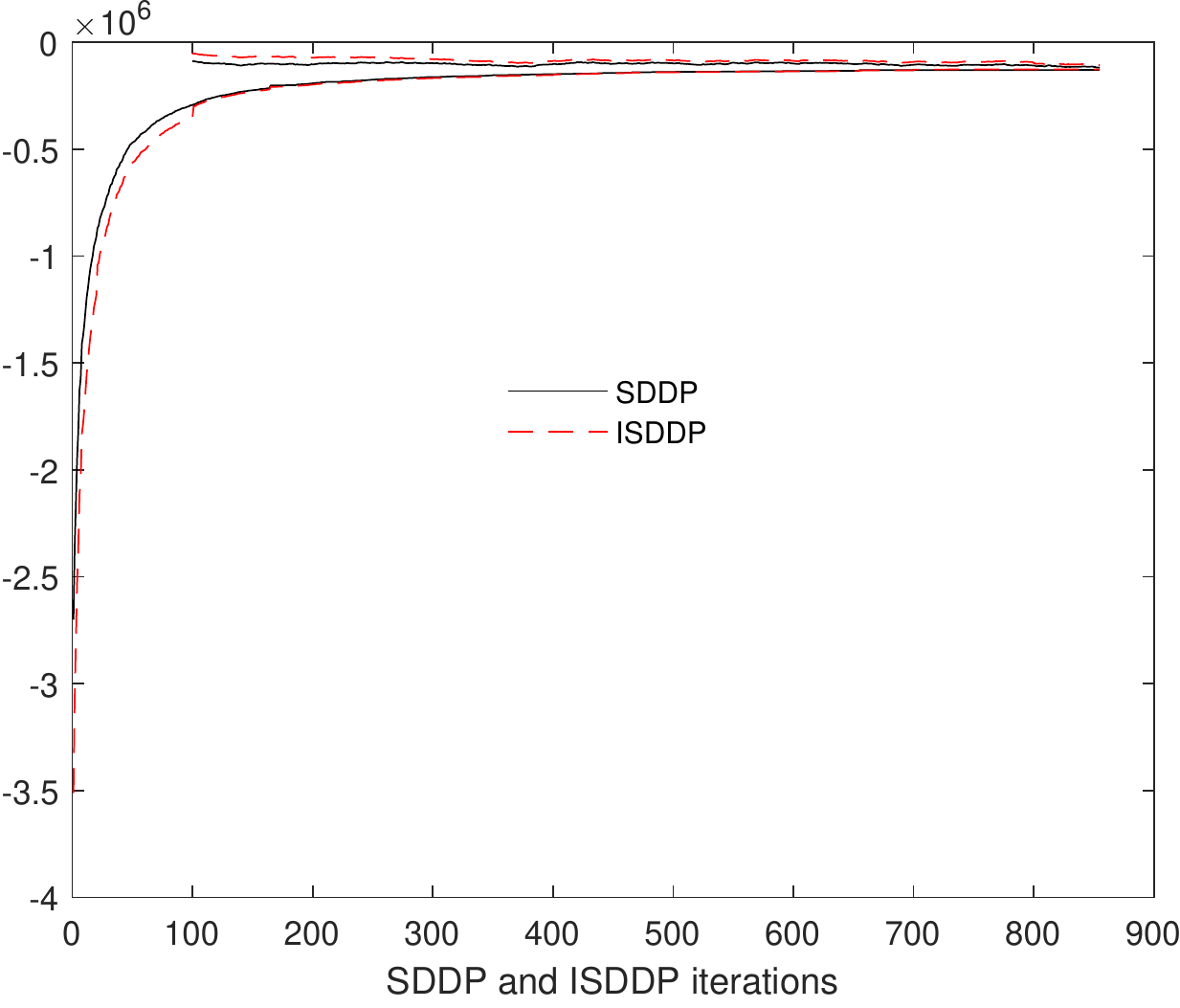}
\end{tabular}
\caption{Top plots: cumulative CPU time (in seconds), middle plots: total number of iterations to solve subproblems, bottom 
plots: upper and lower bounds. Left plots: $M=100,$ $T=10$, $n=50$, right plots: $M=100,$ $T=30$, and $n=50$.}
\label{fig1sddp}
\end{figure}

\begin{figure}
\centering
\begin{tabular}{cc}
\includegraphics[scale=0.6]{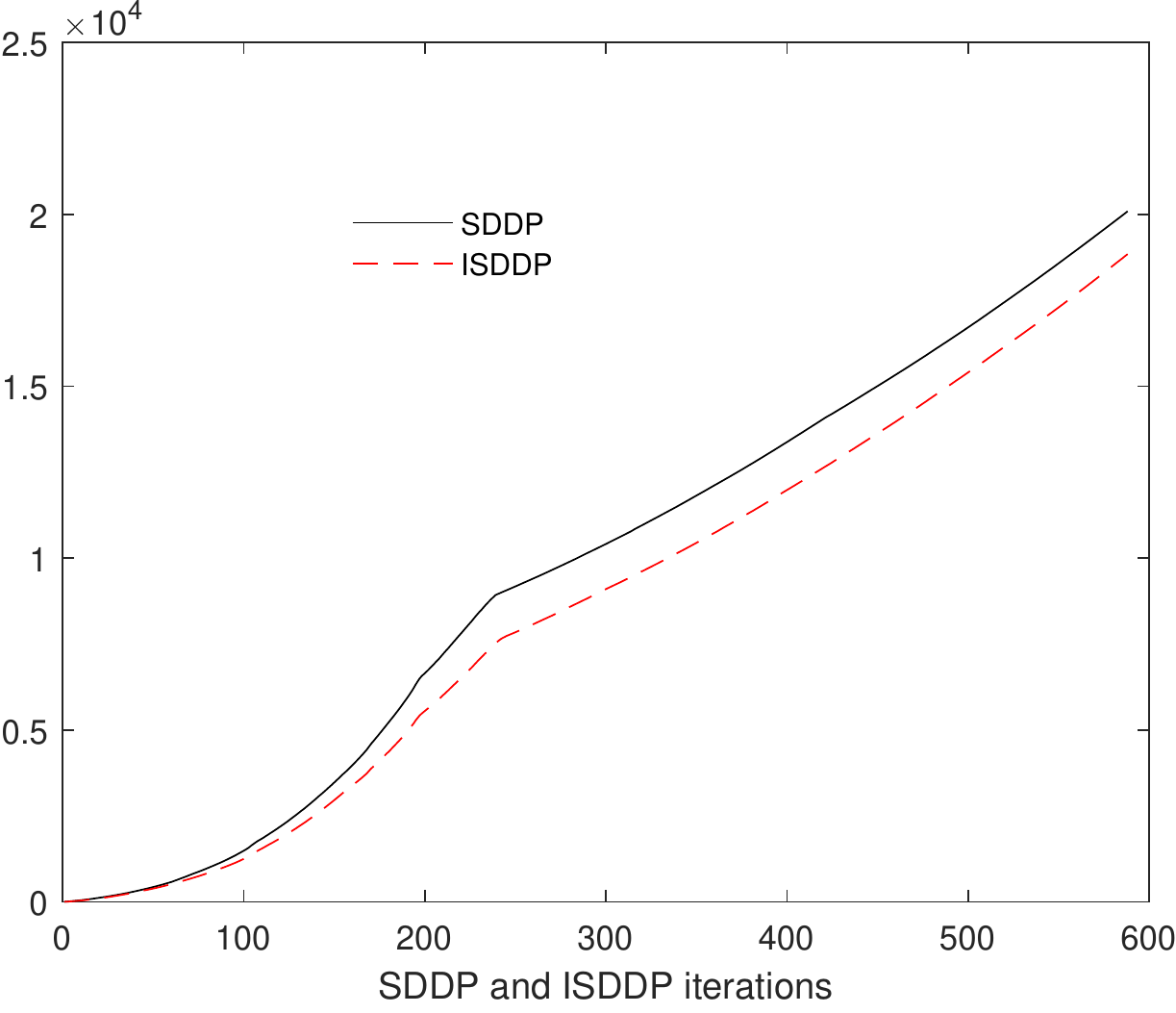}
&
\includegraphics[scale=0.6]{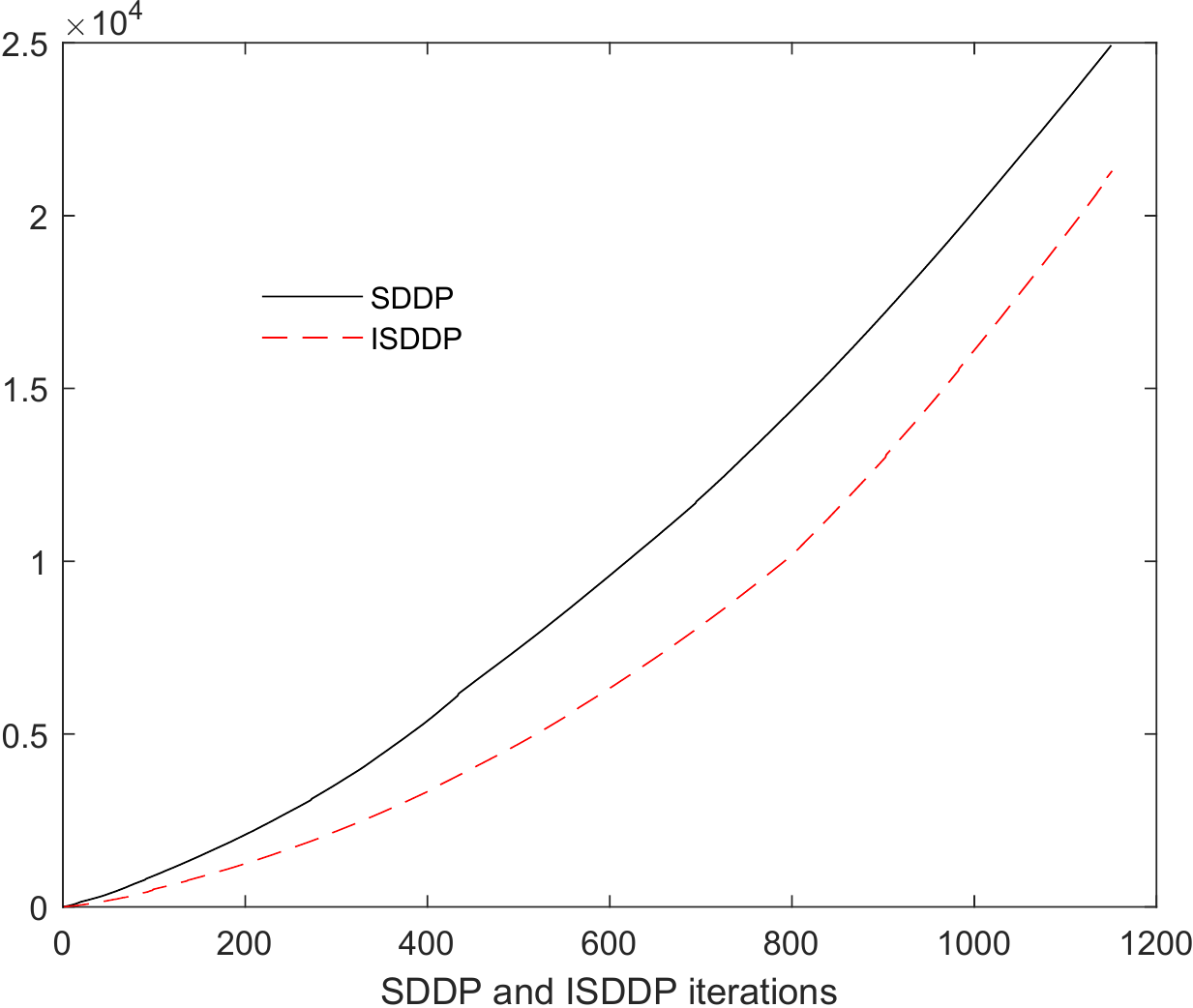}
\\
\includegraphics[scale=0.6]{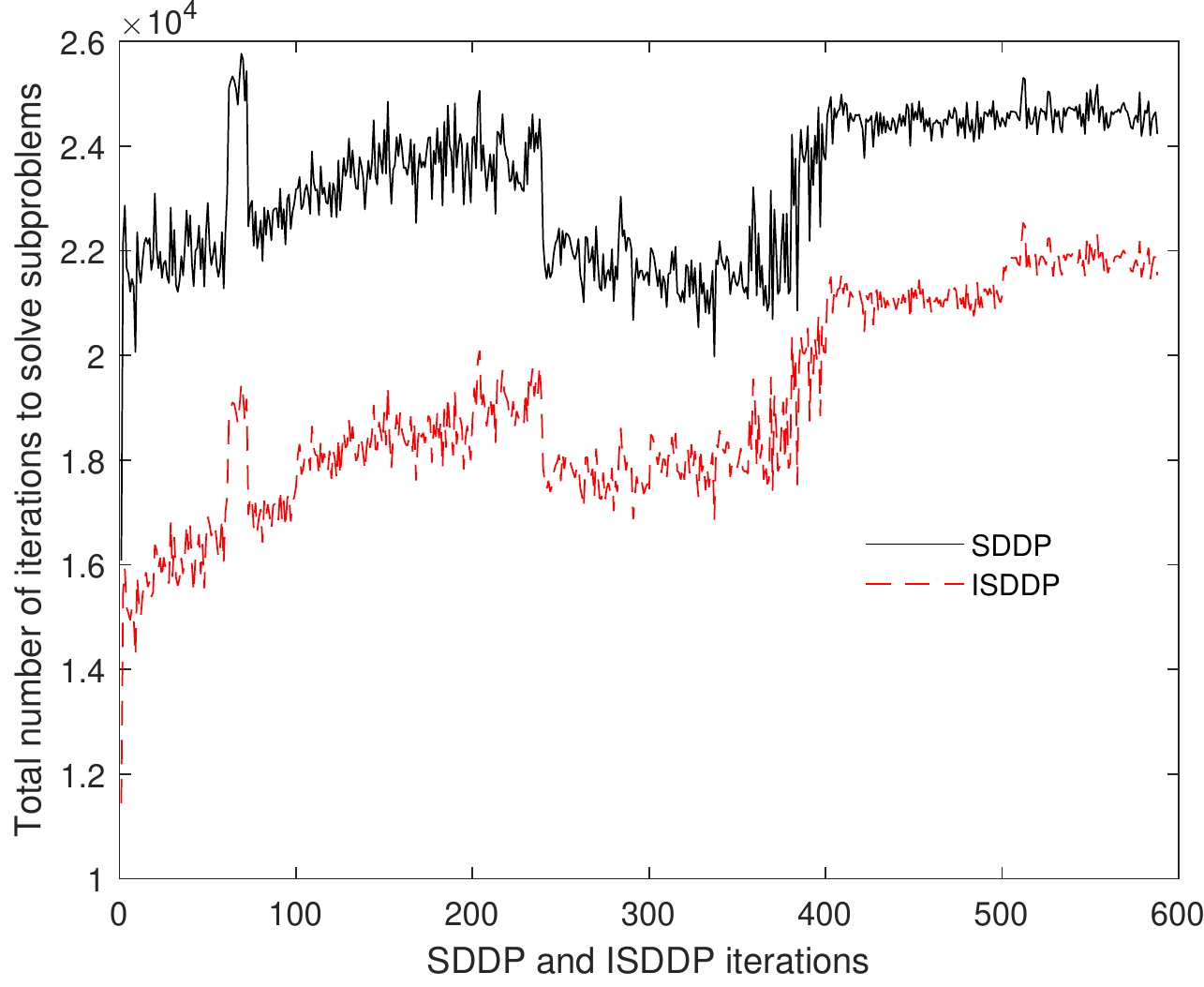}
&
\includegraphics[scale=0.6]{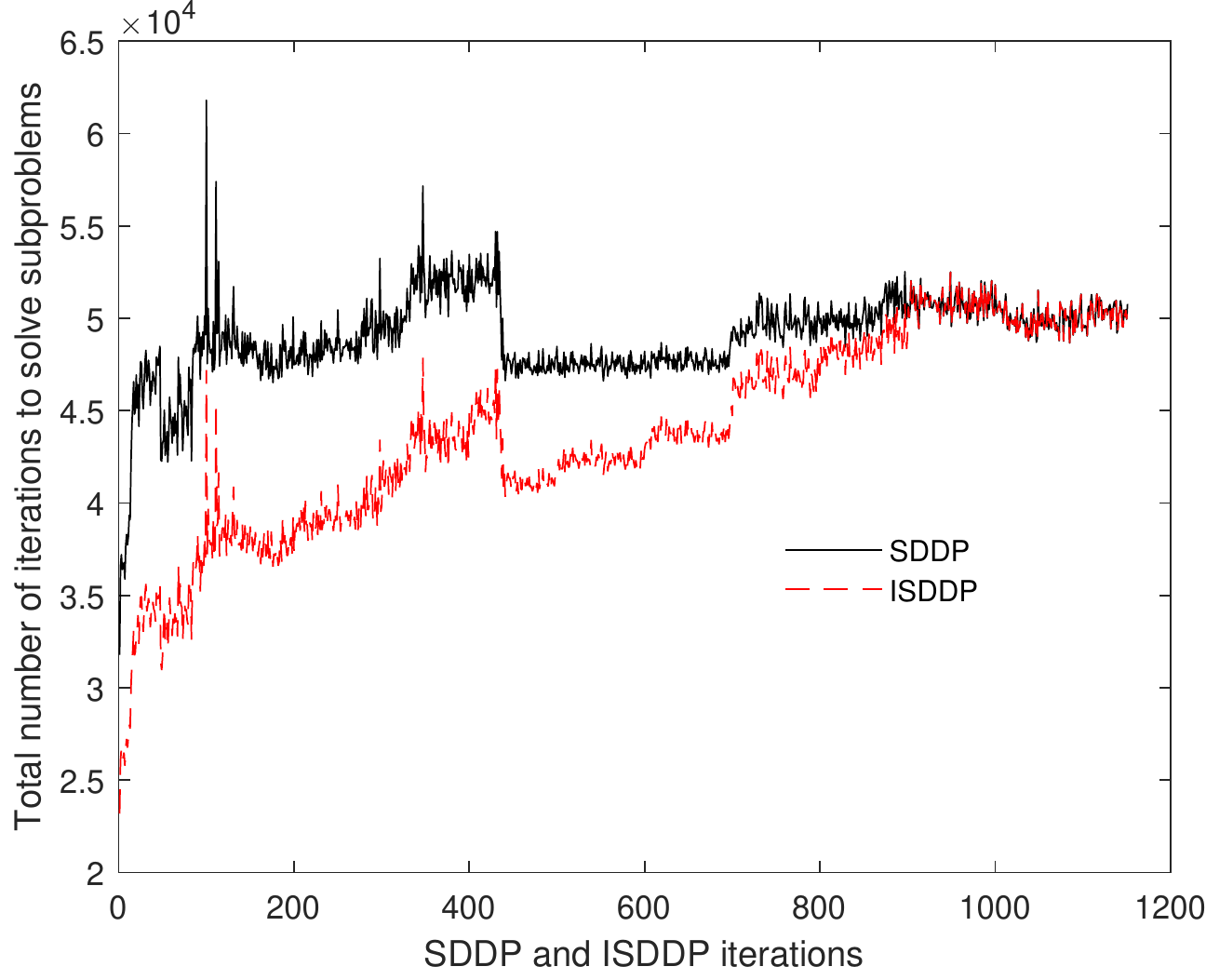}
\\
\includegraphics[scale=0.6]{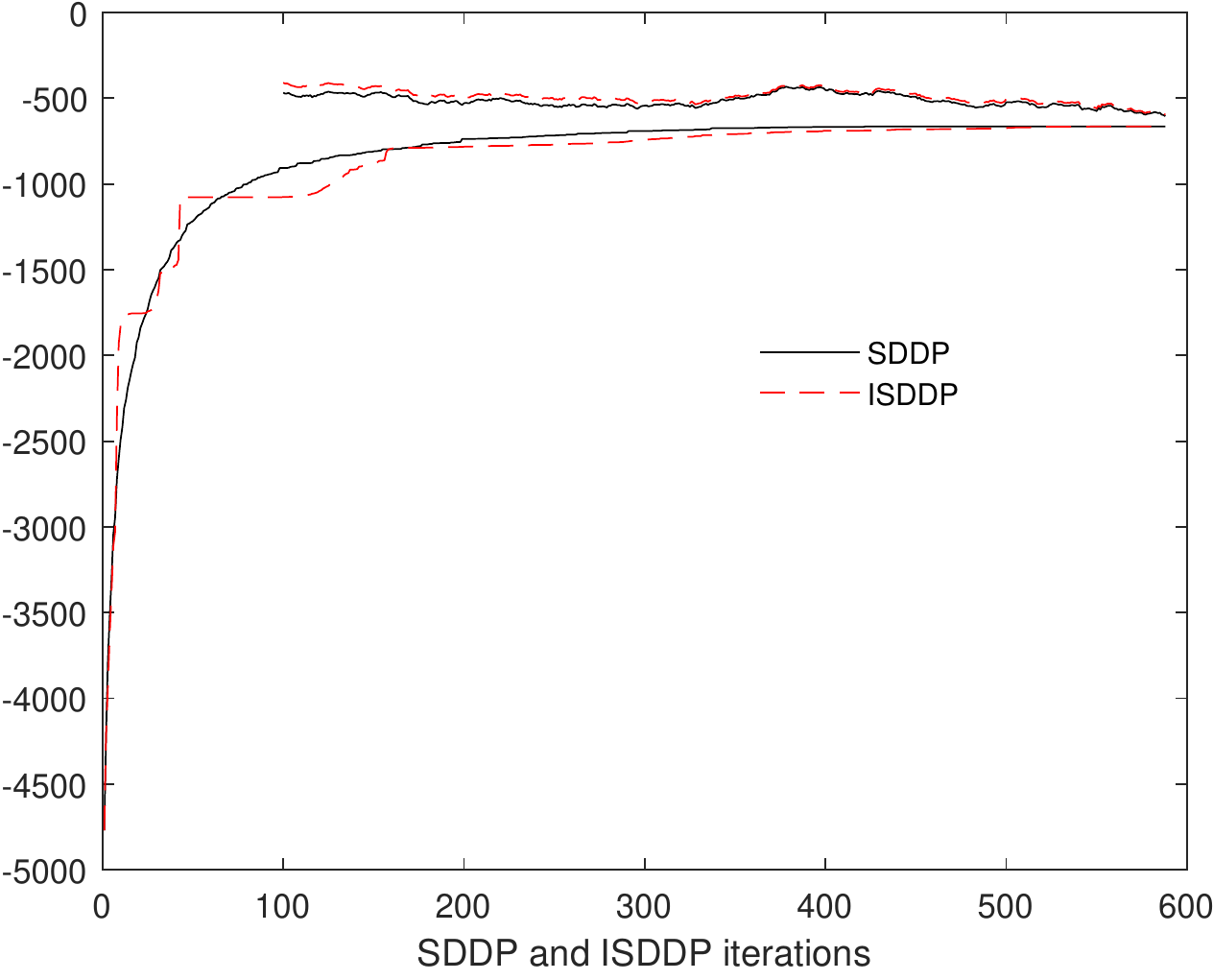}
&
\includegraphics[scale=0.6]{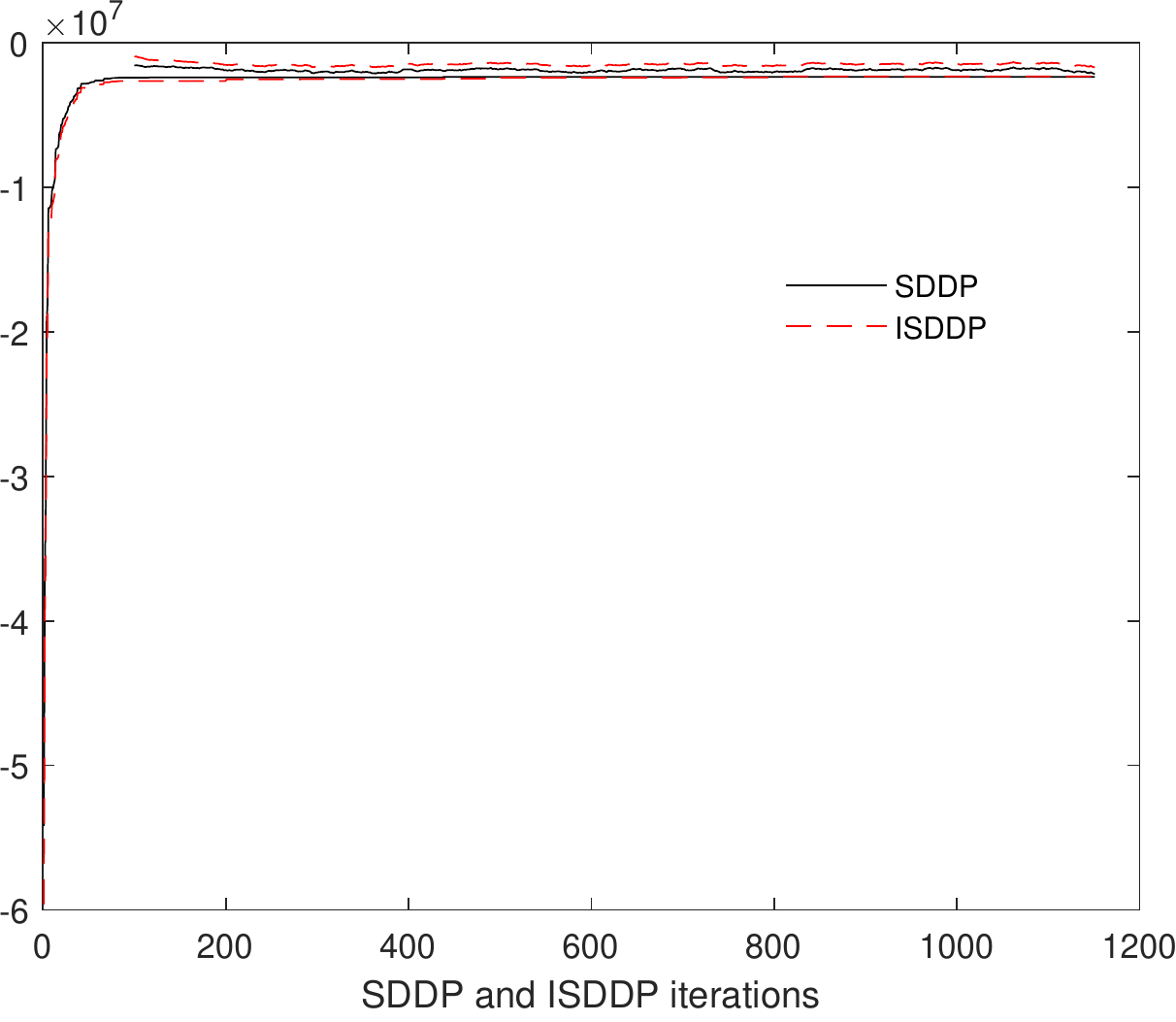}
\end{tabular}
\caption{Top plots: cumulative CPU time (in seconds), middle plots: total number of iterations to solve subproblems, bottom 
plots: upper and lower bounds. Left plots: $M=50,$ $T=20$, $n=50$, right plots: $M=50,$ $T=40$, and $n=10$.}\label{fig2sddp}
\end{figure}

\section{Conclusion}

We have introduced the first inexact variant of SDDP to solve stochastic convex  dynamic programming
equations. We have shown that the method solves these equations for vanishing noises.

It would be interesting to consider the following extensions of this work:
\begin{itemize}
 \item[(i)] derive inexact cuts for problems with nondifferentiable cost and constraint functions;
 \item[(ii)] build cuts in the backward pass on the basis of approximate solutions which are not 
necessarily feasible;
\item[(iii)] apply ISDDP to other real-life applications, testing several strategies for the 
sequence of error terms $(\delta_t^k, \varepsilon_t^k)$ or the maximal number of iterations 
for the LP solver used to solve the subproblems along the iterations of ISDDP.
\end{itemize}

%
%
%
%
%

\section*{Appendix}

\par {\textbf{Proof of Theorem \ref{convisddplp}.}} 

\par (i) We show \eqref{lowerbounds} for $t=2,\ldots,T+1$, and all node $n$ of stage $t-1$ by backward induction on $t$. The relation
holds for $t=T+1$. Now assume that it holds for $t+1$ for some $t \in \{2,\ldots,T\}$. Let us show that it holds for $t$.
Take a node $n$ of stage $t-1$.
Observe that the sequence $\mathcal{Q}_t( x_n^k ) - \mathcal{Q}_t^k ( x_n^k )$ is almost surely bounded and nonnegative.
Therefore it has almost surely a nonnegative limit inferior and a finite limit superior.
Let $\mathcal{S}_n=\{k : n_t^k =n\}$ be the iterations where the sampled scenario passes through node $n$.
For $k \in \mathcal{S}_n$ we have $0 \leq \mathcal{Q}_t( x_n^k ) - \mathcal{Q}_t^k ( x_n^k )$ and 
{\small{
\begin{equation}\label{eqproofconvisddp1}
\begin{array}{l}
\mathcal{Q}_t( x_n^k ) - \mathcal{Q}_t^k ( x_n^k )  \leq   \mathcal{Q}_t( x_n^k ) - \mathcal{C}_t^k ( x_n^k ) \\
 \leq  \displaystyle {\bar \varepsilon} + \sum_{m \in C(n)} p_m  \Big[ \mathfrak{Q}_t(x_n^k , \xi_m ) -  {\underline{\mathfrak{Q}}}_t^{k}(x_n^k , \xi_m )\Big] \\
 \leq  \displaystyle {\bar \varepsilon} + \sum_{m \in C(n)} p_m  \Big[ \mathfrak{Q}_t(x_n^k , \xi_m ) -  {\underline{\mathfrak{Q}}}_t^{k-1}(x_n^k , \xi_m )\Big] \\
 \leq  \displaystyle {\bar \varepsilon} + \delta_t^k + \sum_{m \in C(n)} p_m  \Big[ \mathfrak{Q}_t(x_n^k , \xi_m ) -  \langle c_m , x_m^k \rangle - \mathcal{Q}_{t+1}^{k-1}( x_m^k ) \Big] \\
 \leq  \displaystyle {\bar \varepsilon} + {\bar {\delta}} + \sum_{m \in C(n)} p_m  \Big[ \underbrace{\mathfrak{Q}_t(x_n^k , \xi_m ) -  \langle c_m , x_m^k \rangle - \mathcal{Q}_{t+1}( x_m^k ) }_{\leq 0\mbox{ by definition of }\mathfrak{Q}_t\mbox{ and }x_m^k} + \mathcal{Q}_{t+1}( x_m^k )  -    \mathcal{Q}_{t+1}^{k-1}( x_m^k ) \Big] \\
 \leq  \displaystyle {\bar \varepsilon} + {\bar {\delta}} + \sum_{m \in C(n)} p_m  \Big[ \mathcal{Q}_{t+1}( x_m^k )  -    \mathcal{Q}_{t+1}^{k-1}( x_m^k ) \Big].
\end{array}
\end{equation}
}}

Using the induction hypothesis, we have for every $m \in C(n)$ that
$$
\varlimsup_{k \rightarrow +\infty} \mathcal{Q}_{t+1}( x_{m}^k ) - \mathcal{Q}_{t+1}^k( x_{m}^k ) \leq ({\bar \delta}  +  {\bar{\varepsilon}})(T-t).
$$
In virtue of Lemma \ref{limsuptechlemma}, this implies
\begin{equation}\label{convsupkm1}
\varlimsup_{k \rightarrow +\infty} \mathcal{Q}_{t+1}( x_{m}^k ) - \mathcal{Q}_{t+1}^{k-1}( x_{m}^k ) \leq ({\bar \delta}  +  {\bar{\varepsilon}})(T-t),
\end{equation}
which, plugged into \eqref{eqproofconvisddp1}, gives
\begin{equation}\label{eqconvinsn}
\varlimsup_{k \rightarrow +\infty, k \in \mathcal{S}_n} \mathcal{Q}_t( x_n^k ) - \mathcal{Q}_t^k ( x_n^k ) \leq ({\bar \delta}  +  {\bar{\varepsilon}})(T-t+1).
\end{equation}

Now let us show by contradiction that 
$
\varlimsup_{k \rightarrow +\infty} \mathcal{Q}_t( x_n^k ) - \mathcal{Q}_t^k ( x_n^k ) \leq ({\bar \delta}  +  {\bar{\varepsilon}})(T-t+1)$.
If this relation  does not hold
then there exists $\varepsilon_0>0$ such that
there is an infinite set of iterations $k$ satisfying 
$\mathcal{Q}_t( x_n^k ) - \mathcal{Q}_t^k ( x_n^k ) > ({\bar \delta}  +  {\bar{\varepsilon}})(T-t+1) + \varepsilon_0$
 and by monotonicity, there is also an infinite set of iterations $k$ in the set
 $K=\{k \geq 1 : \mathcal{Q}_t( x_n^k ) - \mathcal{Q}_t^{k-1} ( x_n^k ) > ({\bar \delta}  +  {\bar{\varepsilon}})(T-t+1) + \varepsilon_0\}$.
Let $k_1<k_2<...$ be these iterations: $K=\{k_1,k_2,\ldots,\}$.
Let $y_n^k$ be the random variable which takes the value 1 if $k \in \mathcal{S}_n$ and $0$ otherwise.
Due to Assumptions (A0)-(A2), random variables $y_n^{k_1},y_n^{k_2},\ldots,$ are i.i.d. and have the distribution of $y_n^1$. Therefore by the Strong Law of Large Numbers we get 
$
\frac{1}{N} \displaystyle \sum_{j=1}^N y_n^{k_j}  \xrightarrow{N \rightarrow +\infty} \mathbb{E}[y_n^1]>0 \mbox{ a.s.}
$
Now let $z_1<z_2<\ldots$ be the iterations in $\mathcal{S}_n$: $\mathcal{S}_n=\{z_1,z_2,\ldots\}$.
Relation \eqref{eqconvinsn} can be written 
$
\varlimsup_{k \rightarrow +\infty} \mathcal{Q}_t( x_n^{z_k} ) - \mathcal{Q}_t^{z_k} ( x_n^{z_k} ) \leq ({\bar \delta}  +  {\bar{\varepsilon}})(T-t+1),
$
which, using Lemma \ref{limsuptechlemma}, implies
$
\varlimsup_{k \rightarrow +\infty} \mathcal{Q}_t( x_n^{z_k} ) - \mathcal{Q}_t^{z_{k-1}} ( x_n^{z_k} ) \leq ({\bar \delta}  +  {\bar{\varepsilon}})(T-t+1).
$
Using the fact that $z_k \geq z_{k-1}+1$, we deduce that
$
\varlimsup_{k \rightarrow +\infty, k \in \mathcal{S}_n} \mathcal{Q}_t( x_n^k ) - \mathcal{Q}_t^{k-1} ( x_n^k )  = 
\varlimsup_{k \rightarrow +\infty} \mathcal{Q}_t( x_n^{z_k} ) - \mathcal{Q}_t^{z_{k}-1} ( x_n^{z_k} ) 
 \leq  
\varlimsup_{k \rightarrow +\infty} \mathcal{Q}_t( x_n^{z_k} ) - \mathcal{Q}_t^{z_{k-1}} ( x_n^{z_k} ) \leq ({\bar \delta}  +  {\bar{\varepsilon}})(T-t+1).
$
Therefore, there can only be a finite number of iterations that are both in $K$ and in $\mathcal{S}_n$. This gives
$
\frac{1}{N} \displaystyle \sum_{j=1}^N y_n^{k_j}  \xrightarrow{N \rightarrow +\infty} 0 \mbox{ a.s.}
$
and we obtain the desired contradiction.

\par (ii) Using \eqref{eqproofconvisddp1}, we obtain for every $t=2,\ldots,T$, and every node $n$ of stage $t-1$, that
{\small{
\begin{equation}
0 \leq   \sum_{m \in C(n)} p_m \Big[ c_m^T x_m^k   + \mathcal{Q}_{t+1}( x_m^k )\Big]  - \mathcal{Q}_t( x_n^k ) 
\leq {\bar \delta}  +  {\bar{\varepsilon}} + \sum_{m \in C(n)} p_m \Big[ \mathcal{Q}_{t+1}( x_m^k ) - \mathcal{Q}_{t+1}^{k-1}( x_m^k ) \Big].
\end{equation}
}}
Therefore
$
\varliminf_{k \rightarrow +\infty} \sum_{m \in C(n)} p_m \Big[ c_m^T x_m^k   + \mathcal{Q}_{t+1}( x_m^k ) \Big] -  \mathcal{Q}_t( x_n^k )  \geq 0 
$
and using \eqref{convsupkm1} we get
$$
\varlimsup_{k \rightarrow +\infty} \sum_{m \in C(n)} p_m \Big[ c_m^T x_m^k   + \mathcal{Q}_{t+1}( x_m^k ) \Big]  -  \mathcal{Q}_t(x_n^k ) \leq  ({\bar \delta}  +  {\bar{\varepsilon}})(T-t+1).
$$

\par (iii) We have 
\begin{equation}\label{q1iii}
\begin{array}{lll}
\mathcal{Q}_1 ( x_0 ) \geq {\underline{\mathfrak{Q}}}_1^{k-1} ( x_0 , \xi_1 )  & \geq &
c_1^T x_1^k + \mathcal{Q}_2^{k-1}( x_1^k ) - \delta_1^k \\
& \geq & -{\bar{\delta}} + \mathcal{Q}_1 ( x_0 ) + \mathcal{Q}_2^{k-1}( x_1^k ) - \mathcal{Q}_2 ( x_1^k ). 
\end{array}
\end{equation}
Using \eqref{q1iii} and  \eqref{convsupkm1} with $t=1$, we obtain (iii).\\

\par {\textbf{Additional parameters for ISDDP.}} 
For ISDDP, the maximal
number of iterations allowed for Mosek LP solver to solve subproblems along the iterations of 
ISDDP is given in Table \ref{tablenumberiter0}.

\begin{table}
\centering
\begin{tabular}{c}
\begin{tabular}{|c|c|c|c|}
\hline
\begin{tabular}{c}ISDDP\\iteration\end{tabular} &  $[1,20]$ & $[21,50]$ &  $[51,100]$    \\
\hline
\begin{tabular}{l}
LP solver\\maximal\\
number of\\
iterations at $t$
\end{tabular}
& $\left \lceil (0.4+ 0.6 \frac{(t-2)}{T-2})I_{\max} \right \rceil$ &
$\left \lceil (0.45 + 0.55\frac{(t-2)}{T-2})I_{\max} \right \rceil$  & 
$\left \lceil (0.5 + 0.5\frac{(t-2)}{T-2})I_{\max} \right \rceil$ \\
\hline
\end{tabular}
\\
\begin{tabular}{|c|c|c|c|}
\hline
\begin{tabular}{c}ISDDP\\iteration\end{tabular}  & $[101,200]$  &  $[201,300]$ & $[301,400]$  \\
\hline
\begin{tabular}{l}
LP solver\\maximal\\
number of\\
iterations at $t$
\end{tabular}
&
$\left \lceil(0.55+ 0.45\frac{(t-2)}{T-2})I_{\max} \right \rceil$
& $\left \lceil(0.6+ 0.4\frac{(t-2)}{T-2})I_{\max} \right \rceil$ & 
$\left \lceil (0.65+ 0.35\frac{(t-2)}{T-2})I_{\max} \right \rceil$ \\
\hline
\end{tabular}
\\
\begin{tabular}{|c|c|c|c|}
\hline
\begin{tabular}{c}ISDDP\\iteration\end{tabular}  & $[401,500]$  &  $[501,600]$ & $[601,700]$  \\
\hline
\begin{tabular}{l}
LP solver\\maximal\\
number of\\
iterations at $t$
\end{tabular}
&
$\left \lceil(0.7+ 0.3\frac{(t-2)}{T-2})I_{\max} \right \rceil$
& $\left \lceil(0.75+ 0.25\frac{(t-2)}{T-2})I_{\max} \right \rceil$ & 
$\left \lceil (0.8+ 0.2\frac{(t-2)}{T-2})I_{\max} \right \rceil$ \\
\hline
\end{tabular}
\\
\begin{tabular}{|c|c|c|c|}
\hline
\begin{tabular}{c}ISDDP\\iteration\end{tabular}  & $[701,800]$  &  $[801,900]$ & $>900$  \\
\hline
\begin{tabular}{l}
LP solver\\maximal\\
number of\\
iterations at $t$
\end{tabular}
&
$\left \lceil(0.85+ 0.15\frac{(t-2)}{T-2})I_{\max} \right \rceil$
& $\left \lceil(0.9+ 0.1\frac{(t-2)}{T-2})I_{\max} \right \rceil$ & 
$I_{\max}$ \\
\hline
\end{tabular}
\end{tabular}
\caption{Maximal number of iterations for Mosek LP solver for solving backward and forward passes 
subproblems
as a function of stage $t \geq 2$, ISDDP iteration, and the number $I_{\max}$ of iterations 
used to solve subproblems with SDDP with high accuracy.
In this table, $\left \lceil x \right \rceil$ is the smallest integer 
larger than or equal to $x$.}\label{tablenumberiter0}
\end{table}

\bibliographystyle{siamplain}
\bibliography{Bibliography}

\end{document}